\newcommand{\TheTitle}
{Variational Properties of Matrix Functions via the 
Generalized Matrix-Fractional Function}
\newcommand{\TheAuthors}{J. V. Burke, Y. Gao, and T. Hoheisel}
\title{{\TheTitle}}
\author{
	James V. Burke\thanks{Department of Mathematics, University of Washington, Seattle, WA 98195 (\email{jvburke@uw.edu}). Research is supported in part by the National Science Foundation under grant number DMS­-1514559.} 
	\and
	Yuan Gao\thanks{Department of Applied Mathematics, University of Washington, Seattle, WA 98195 (\email{yuangao@uw.edu}).}
	\and
	Tim Hoheisel\thanks{McGill University, 805 Sherbrooke St West, Room1114, Montr\'eal, Qu\'ebec, Canada H3A 0B9 \email{tim.hoheisel@mcgill.ca})}
}
\newtheorem{example}[theorem]{Example}
\def\del{\delta}
\newcommand{\N}{\mathbb{N}}
\newcommand{\goesto}{\rightarrow}
\newcommand{\aff}{\mathrm{aff}\,}
\def\argmin{ \mathop{{\rm argmin}}}
\def\argmax{ \mathop{{\rm argmax}}}
\newcommand{\rbd}{\mathrm{rbd}\,}
\newcommand{\co}{\mathrm{conv}\,}
\newcommand{\cl}{\mathrm{cl}\,}
\newcommand{\clco}{\mathrm{\overline{conv}}\,}
\newcommand{\dom}{\mathrm{dom}\,}
\newcommand{\epi}{\mathrm{epi}\,}
\newcommand{\inter}{\mathrm{int}\,}
\newcommand{\pos}{\mathrm{pos}\,}
\newcommand{\ri}{\mathrm{ri}\,}
\newcommand{\barr}{\mathrm{bar}\,}
\newcommand{\rge}{\mathrm{rge}\,}
\newcommand{\Ker}{\mathrm{Ker}}
\newcommand{\Rge}{\mathrm{Rge}}
\newcommand{\hzn}{\mathrm{hzn}\,}
\newcommand{\lin}{\mathrm{span}\,}
\newcommand{\p}{\partial}
\newcommand{\R}{\mathbb{R}}
\newcommand{\Rnm}{\R^{n\times m}}
\newcommand{\bS}{\mathbb{S}}
\newcommand{\Sn}{\bS^n}
\newcommand{\rank}{\mathrm{rank}\,}
\newcommand{\rbar}{\overline{\mathbb R}}
\newcommand{\Rbar}{\overline{\mathbb R}}
\newcommand{\eR}{\mathbb R\cup\{+\infty\}}
\newcommand{\bR}{\mathbb{R}}
\def\Rnm{\bR^{n\times m}}
\def\bS{\mathbb{S}}
\def\Sn{\bS^n}
\def\Snp{\bS^n_+}
\newcommand{\tr}{\mathrm{tr}\,}
\newcommand{\cone}{\mathrm{cone}\;}
\newcommand{\bB}{\mathbb{B}}
\newcommand{\bN}{\mathbb{N}}
\newcommand{\bE}{\mathbb{E }}
\newcommand{\map}[3]{#1 :#2\rightarrow #3}
\newcommand{\ip}[2]{\left\langle #1,\, #2\right\rangle}
\newcommand{\half}{\frac{1}{2}}
\newcommand{\norm}[1]{\left\Vert #1\right\Vert}
\newcommand{\tnorm}[1]{\left\Vert #1\right\Vert_2}
\newcommand{\set}[2]{\left\{#1\,\left\vert\; #2\right.\right\}}
\newcommand{\ncone}[2]{N_{#2}\left(#1\right)}
\newcommand{\support}[2]{\sig_{#2}\left(#1\right)}
\newcommand{\indicator}[2]{\del_{#2}\left(#1\right)}
\newcommand{\gauge}[2]{\gam_{#2}\left(#1\right)}
\newcommand{\barrier}{{\mathrm{bar}\,}}
\def\sd{\partial}
\newcommand{\bx}{{\bar{x}}}
\newcommand{\by}{{\bar{y}}}
\newcommand{\bz}{{\bar{z}}}
\newcommand{\bt}{{\bar{t}}}
\newcommand{\barX}{{\bar X}}
\newcommand{\barY}{{\bar Y}}
\newcommand{\barV}{{\bar V}}
\newcommand{\barW}{{\bar W}}
\newcommand{\gam}{\gamma}
\newcommand{\sig}{\sigma}
\newcommand{\lam}{\lambda}
\def\eps{\epsilon}
\newcommand{\cD}{\mathcal{D}}
\newcommand{\cE}{\mathcal{E}}
\newcommand{\cF}{\mathcal{F}}
\newcommand{\cK}{\mathcal{K}}
\newcommand{\cS}{\mathcal{S}}
\newcommand{\cT}{\mathcal{T}}
\newcommand{\cU}{\mathcal{U}}
\newcommand{\cV}{\mathcal{V}}
\newcommand{\kS}{\mathfrak{S}}
\newcommand{\AND}{\ \mbox{ and }\ }
\newcommand{\infconv}{\mathbin{\mbox{\small$\square$}}}
\newcommand{\hcVp}{{\cV^{1/2}_A}}
\newcommand{\vphi}{\varphi}
\begin{document}

\maketitle
\begin{abstract}
We show that many important convex matrix functions
can be represented as the partial infimal projection of the generalized
matrix fractional  (GMF) and a relatively simple convex function.
This representation provides 
conditions under which such functions are closed and proper as well
as  formulas for the ready computation of both 
their conjugates and subdifferentials.
%Special attention is given to support and indicator functions. 
Particular instances yield
all weighted Ky Fan norms and squared gauges on $\R^{n\times m}$,
and as an example we show that
all variational Gram functions are representable as squares of gauges.
Other instances yield weighted
sums of the Frobenius and nuclear norms. 
The scope of applications is large and the range
of variational properties and insight is fascinating and fundamental.
An important byproduct of these representations is that they 
lay the foundation for a smoothing approach to many matrix functions 
on the interior
of the domain of the GMF  function.
%, which  opens the door to a range of unexplored optimization methods.
\end{abstract}

%\begin{keywords} convex analysis, infimal projection, 
%matrix-fractional function, support function, gauge function,  subdifferential
%\end{keywords}
%
%% REQUIRED
%\begin{AMS}
%  68Q25, 68R10, 68U05
%\end{AMS}

\begin{keywords}
convex analysis, infimal projection, 
matrix-fractional function, support function, gauge function,  subdifferential, Ky Fan norm, variational Gram function
\end{keywords}

\begin{AMS}
 68Q25, 68R10, 68U05
\end{AMS}

\maketitle

\pagestyle{myheadings}
\thispagestyle{plain}

\section{Introduction}\label{sec:Intro}

\noindent
The {\em generalized matrix-fractional (GMF)} function was introduced
by Burke and Hoheisel in \cite{BuH15} where it is shown to unify a number of tools
and concepts for matrix optimization including optimal value functions
in quadratic programming, nuclear norm optimization, multi-task learning,
and, of course, the matrix fractional function. In the present paper
we expand the number of applications to include all   {\em Ky Fan norms}, 
matrix {\em gauge functionals}, 
and {\em variational Gram functions} introduced by Jalali, Fazel and Xiao
in \cite{JFX17}. Our analysis includes descriptions
of the variational properties of these functions such as formulas
for their convex conjugates and their subdifferentials. 

Set  $\bE:=\Rnm\times \Sn$, where $\Rnm$ and $\Sn$ are the 
linear spaces of real
$n\times m$ matrices and (real) symmetric $n\times n$ matrices,
respectively. 
Given $(A,B)\in \R^{\ell\times n}\times\R^{\ell\times m}$ with $\rge B \subset \rge A$,
recall that the GMF function $\varphi$
is defined as the support function of
the graph of the matrix valued mapping 
$Y\mapsto -\half YY^T$ over the manifold $\set{Y\in\Rnm}{AY=B}$, i.e.,  
$\map{\varphi}{\bE}{\eR}$ is given by
\begin{equation}\label{eq:GMF support}
\varphi(X,V):=\sup\set{\ip{(Y,W)}{(X,V)}}{(Y,W)\in\cD(A,B)},
\end{equation}
where 
\begin{equation}\label{eq:D}
\cD(A,B) :=\set{\left(Y,-\half YY^T\right)\in \bE}{Y\in \mathbb R^{n\times m}:\;AY=B}.
\end{equation}
%\begin{eqnarray}\label{eq:GMF support}
%\varphi(X,V)&=&\sup\set{\ip{(Y,W)}{(X,V)}}{(Y,W)\in\cD(A,B)},
%\quad\mathrm{where}
%\\ \label{eq:D}
%%\begin{equation}\label{eq:D}
%\cD(A,B) &:=&\set{\left(Y,-\half YY^T\right)\in \bE}{Y\in \mathbb R^{n\times m}:\;AY=B}.
%\end{eqnarray}
A closed form expression for $\varphi$ is derived in \cite[Theorem 4.1]{BuH15}
where it is also shown that $\varphi$ is smooth on
the (nonempty) interior of its domain. 

Our study focuses on  functions $\map{p}{\Rnm}{\Rbar=\R\cup\{\pm\infty\}}$ 
representable
as the partial infimal projection %of the form
 \begin{equation}\label{eq:p1}
p(X):=\inf_{V\in\bS^n} \vphi(X,V)+h(V),
\end{equation}
where $\map{h}{\Sn}{\eR}$ is closed, proper, convex.
Different functions $h$ illuminate different variational properties
of the matrix $X$. 
For example, when $h:=\ip{U}{\cdot}$ for $U\in\Sn_{++}$ and both
$A$ and  $B$ are  zero, then $p$ is a weighted nuclear norm
where the weights depend on any ``square root'' of $U$
(see Corollary \ref{cor:h linear}). Among the 
consequences of the representation \eqref{eq:p1} 
are conditions under which $p$ is closed and proper as well
as formulas for the ready computation of both the conjugate $p^*$ and
the subdifferential $\sd p$
(Section \ref{sec:Main}). As an application of our general results,
we give more detailed explorations in the cases where $h$ is 
a support function (Section \ref{sec:Support Func}) 
or an indicator function (Section \ref{sec:Indicator}). We illustrate
these results with specific instances. For example,
we obtain all weighted squared gauges on $\R^{n\times m}$, cf.~Corollary \ref{cor:Gauge case II}, as well as  
a complete characterization of variational  Gram functions  \cite{JFX17} and their conjugates.
In addition, we show that all variational Gram functions are representable as squares of gauges, cf.~Proposition \ref{prop:VGF conjugate}.
Other choices yield weighted
sums of Frobenius and nuclear norms  \cite[Corollary 5.9]{BuH15}. 
The scope of applications is large and the range
of variational properties  is fascinating and fundamental.

Beyond 
the variational results of this paper, 
there is a compelling but unexplored
computational aspect:
Hsieh and Olsen \cite{HsO 14} show that  
\eqref{eq:p1} with $h=\half\tr (\cdot)$ yields a
smoothing approach to optimization problems involving the nuclear norm.
More generally, observe that many
matrix optimization problems  take the form
\begin{equation}\label{eq:P}
\min_{X\in\bR^{n\times m}} f(X)+p(X)\tag{$P$},
\end{equation}
where $f, p:\bR^{n\times m}\to \bR\cup\{+\infty\}$.
The function $f$ is thought of as the primary objective
and is often smooth or convex  while $p$ is typically
a structure inducing convex function. 
%Our focus is on the
%function $p$. 
%For example, $p$ may be the nuclear norm, 
%a Ky Fan norm, or, more generally, a variational gram function.
%The objective is to explore how these functions and others
%can be represented in the form 
%is smooth on its domain and $p$ is convex.
%If $\map{p}{\bR^{n\times m}}{\rbar}$ is given by
%\begin{equation}\label{eq:p1}
%p(X):=\inf_{V\in\bS^n} \vphi(X,V)+h(V),
%\end{equation}
%where $\map{h}{\Sn}{\eR}$ is convex and
%$\map{\varphi}{\Rnm\times \Sn}{\rbar}$ is the generalized
%matrix fractional function introduced in \cite{BuH15, BGH17}.
Using the representation \eqref{eq:p1}, the problem \eqref{eq:P}  can be written as
\begin{equation}\label{eq:embedd optimization}
\min_{(X,V)\in\bE} f(X)+\vphi(X,V)+h(V).
\end{equation}
This reformulation allows one to exploit the smoothness of $\varphi$ on the interior of its domain. For example, if both $f$ and $h$ are smooth, one can employ
a damped Newton, or path following approach to solving \eqref{eq:P}.
We emphasize, that this is not the goal or intent of this paper, however,
our results provide the basis for future investigations along a variety of such numerical and theoretical avenues.
 
The paper is organized as follows: In  Section \ref{sec:Prelim} we provide  the tools from convex  analysis and some basic properties of the GMF function.  Section \ref{sec:Main} contains the general theory for partial  infimal projections of the form \eqref{eq:p1}.  In Section \ref{sec:Support Func} we specify $h$ in \eqref{eq:p1} to be a support function of some closed, convex set $\cV\subset \bS^n$. In  Section \ref{sec:Indicator} we choose $h$ to be the indicator of such set. In particular,  this yields  powerful results  on  variational Gram functions  and Ky Fan norms in 
Sections \ref{sec:VGF} and \ref{sec:VGF and Ky Fan}.  We close out with some final remarks in Section \ref{sec:Final} and  supplementary material in Section \ref{sec:Appendix}.
\smallskip

\noindent
{\em Notation:} For a linear transformation $L$ between finite dimensional linear spaces, we write  
$\rge L$ and  $\ker L$ for its {\em range} and  {\em kernel}, respectively.
For a given choice of bases, every such linear 
transformation has a matrix representation
for some $A\in \R^{\ell\times n}$. Therefore, we also
write $\rge A$ and  $\ker A$ for the {\em range} and {\em kernel}, respectively, 
considering $A$ as a linear map between
$\R^n$ and $\R^\ell$.
Again, for $A\in \R^{\ell\times n}$, we set 
$$
\begin{aligned}\Ker_rA&:=\set{X\in \R^{n\times r}}{AX=0}
=\set{X\in \R^{n\times r}}{\rge X\subset \ker A},
\\
\Rge_rA
&:= \set{Y\in \R^{\ell\times r}}{\exists\, X\in \R^{n\times r}\, :\ Y=AX}
=\set{Y\in \R^{\ell\times r}}{\rge Y\subset \rge A}
\end{aligned}
$$ 
and write
$\Ker A$ or $\Rge A$ when the choice of $r$ is clear.
Observe that $\Ker_1 A=\ker A$, $\Rge_1A=\rge A$, and
$(\Ker_rA)^\perp=\Rge_rA^T$. We equip any matrix space with the (Frobenius) inner product $\ip{X}{Y}:=\tr(X^TY)$.
The {\em Moore-Penrose pseudoinverse}  \cite{HJ85} of $A$ is denoted by $A^\dagger$. The set of all  $n\times n$ symmetric matrices   is given by $\bS^n$.  
The positive and negative semidefinite cone are denoted by $\bS_+^n$ and $\bS^n_-$, respectively. 

%The {\em extended real line} is  denoted by $\rbar :=\R\cup\{\pm \infty\}$. 
For two sets $S,T$ in the same real linear space  their {\em Minkowski sum}
is
$
S+T:=\set{s+t}{s\in S,\; t\in T}.
$ 
For $I\subset  \R$ we also put 
$
I\cdot S:=\set{\lambda s}{\lambda\in I,\; s\in S}.
$

\section{Preliminaries}\label{sec:Prelim}

\subsection*{Tools from convex analysis}

Let $(\cE,\ip{\cdot}{\cdot})$ be a finite-dimensional Euclidean space with  induced norm   
$\|\cdot\|:=\sqrt{\ip{\cdot}{\cdot}}$.  The closed  $\eps$-ball about a point $x\in\cE$ is denoted by $B_\eps(x)$.  
%
%
%
%\subsection{Set-operations}
%We first  establish  a series of set-operations that are of great importance to our study:  
Let   $S\subset\cE$ be nonempty.   
The (topological) {\em closure} and {\em interior} of  $S$ are denoted by $\cl S$ and $\inter S$,  respectively.  The {\em (linear) span} of $S$ is denoted by $\lin S$. 
The {\em affine hull} of $S$, denoted $\aff S$, is the intersection of all affine sets 
containing $S$, while
the {\em convex hull} of $S$, denoted $\co S$, is the intersection of all convex sets containing $S$.
%set of all convex combinations of elements of $S$ 
%and is denoted by $\co S$.  
Its closure (the {\em closed convex hull}) is $\clco S:=\cl(\co S)$.   
The {\em conical} and {\em convex conical} hull of $S$ are given by 
\(
%\begin{aligned}
\pos S:=\set{\lambda x}{x\in S,\;\lambda\geq 0},\) 
and 
%\]
%The {\em convex conical hull} of $S$ is 
%\[
\(
\cone S:=\set{\sum_{i=1}^r\lambda_i x_i}{r\in \bN,\; x_i\in S,\;\lambda_i\geq 0},
%\end{aligned}
\)
respectively,
%It is easily seen that 
with $\cone S=\pos(\co S)=\co(\pos S)$. 
The closure of the latter is 
$
\overline \cone S:=\cl (\cone S).
$
%The {\em affine hull} of $S$ is denoted by $\aff S$.

The {\em relative interior} of a convex set $C\subset \cE$, denoted $\ri C$, is the interior
of $C$ relative to its affine hull. By \cite[Section 6.2]{BaC11}, we have
%%\\
%%\centerline{\(
%$\ri C=\set{x\in C}{\exists \varepsilon>0: \;B_\varepsilon(x)\cap \aff C\subset C}.$
%%\)}
%%\\
%The points  in $\ri C$ are characterized through (see, e.g., \cite[Section 6.2]{BaC11})  
\begin{equation}\label{eq:RintChar}
x\in \ri C\iff\pos(C-x)=\lin (C-x).
\end{equation}
%where the latter is the (unique) subspace parallel to $\aff C$. 
\noindent
The {\em polar set} of $S\subset \cE$ is defined by 
\(
S^\circ:=\set{v\in \cE}{\ip{v}{x}\leq 1\;(x\in S)},
\)
%Moreover, we define the {\em bipolar  set} of $S$ by  $S^{\circ\circ}:=(S^{\circ})^\circ$, so that $S^{\circ \circ}=\clco(S\cup \{0\})$. 
%If $K\subset \cE$ is  a  cone (i.e. $\pos K\subset K$) we have
%\(
%K^\circ =\set{v\in \cE}{\ip{v}{x}\leq 0\;(x\in K)}=:K^-.
%\)
%If $U\subset\cE$ is a subspace, $U^\circ$ is the orthogonal subspace $U^\perp$.
%The 
and the {\em horizon cone} is the closed cone
\(
S^\infty:=\set{v\in \cE}{\exists \{\lambda_k\}\downarrow 0,\; \{x_k\in S\}:\; \lambda_k x_k\to v}.
\)
%For a cone $K\subset \cE$ we have $K^\infty=\cl K$.  Moreover, 
For a convex set $C\subset \cE$, $C^\infty$  coincides with the {\em recession cone} of the closure of $C$, i.e.
\begin{equation}\label{eq:horizon recession}
C^\infty=\set{v}{x+tv\in \cl C\;(t\geq 0, \;x\in C)}
=\set{y}{C+y\subset C}.
\end{equation}
%The following Lemma, which is used later, will make the reader more familiar with horizon operations. 

%\begin{lemma}\label{lem:Horizon} Let $C\subset \cE$ be convex and $K\subset \cE$ a convex cone such that $C\cap K\neq \emptyset$. Then 
%$C^\infty\cap \cl K=\{0\}$ if and only if $C\cap K$ is bounded.   
%\end{lemma}
%\begin{proof} Observe that
%$C^\infty\cap  \cl K=(\cl C)^\infty\cap (\cl K)^\infty =(\cl C\cap \cl K)^\infty=(C\cap K)^\infty$  by \cite[Proposition 3.9]{RoW98}. Hence, the statement follows readily from \cite[Theorem 3.5]{RoW98}.
%\end{proof}

%\subsection{Normal cone and support functions} 

\noindent
For $\map{f}{\cE}{\rbar}$ its {\em domain} and {\em epigraph} are given by
\(
\dom f:=\set{x\in\cE}{ f(x)<+\infty}\)
and \(\epi f:=\set{(x,\alpha)\in\cE\times \R}{f(x)\leq \alpha},
\)
respectively.
We say $f$ is {\em proper} if $f(x)>-\infty$ for all $x\in\dom f\ne\emptyset$.
We call $f$   {\em convex} if its epigraph $\epi f$ is convex,  and {\em closed} (or {\em lower semicontinuous}) if $\epi f$ is closed. If $f$ is proper, we call it {\em positively homogeneous}  if $\epi f$ is a cone, and {\em sublinear} if $\epi f$ is a convex cone. 
In what follows we use the following abbreviations:
%\\
%\centerline{\(
%\Gamma(\cE)\!:=\!\set{f:\cE\to \eR\!}{\!f\;\text{proper, convex}}\!\AND\Gamma_0(\cE)\!:=\!\set{f\in \Gamma(\cE)}{f\;\text{closed}}.
%\)
%}
%\\
\[
\Gamma(\cE)  :=  \set{f:\cE\to \eR}{f\;\text{proper, convex}},\;\Gamma_0(\cE) :=  \set{f\in \Gamma(\cE)}{f\;\text{closed}}.
\]
%We will drop the underlying space in the notation, by simply writing $\Gamma$ and $\Gamma_0$, respectively.
The {\em lower semicontinuous hull} $\cl f$ and the  {\em horizon function} $f^\infty$ of $f$ are defined through the relations
\(
\cl(\epi f)=\epi \cl f \AND\epi f^\infty = (\epi f)^\infty,
\)  
respectively. For $f\in \Gamma_0(\cE)$, $f^\infty$ is also known as the 
% the horizon function  coincides with the 
 {\em recession function} \cite[p.~66]{RTR70} or the   
%Also the moniker 
{\em asymptotic function}  \cite{AuT03,HUL 01}.
The {\em horizon cone of a function} $f$ is defined as 
\(
\hzn f:=\set{x}{f^\infty(x)\leq 0},
\)
%By \cite[Theorem 8.7]{RTR70}, 
and for $f\in \Gamma_0$, we have 
\(
\hzn f=\set{x}{f(x)\le \mu}^\infty
\)
for  $\mu\in \R$ such that $\set{x}{f(x)\le \mu}\ne\emptyset$ \cite[Theorem 8.7]{RTR70}.

%for all $\mu\in\R$ for which $\set{W}{h^*(W)\le \mu}\ne\emptyset$. 
For a convex function $\map{f}{\cE}{\eR}$ its {\em subdifferential} at  
$\bar x\in \dom f$ is given by
\(
\partial f(\bar x):=\set{v\in \cE}{f(x)\geq f(\bar x)+\ip{v}{x-\bar x}\, (x\in\cE)}.
\)
For  $f\in\Gamma_0(\cE)$, we have
\(
\ri(\dom f)\subset \dom \p f\subset \dom f,
\)
see e.g. \cite[p.~227]{RTR70}, where
$\dom \partial f:=\set{x\in \cE}{\p f(x)\neq \emptyset}$ is the {\em domain of the subdifferential}.
 
For a function $f:\cE\to\rbar$ its {\em (Fenchel) conjugate}   $f^*:\cE\to \rbar$ is given by
\(
f^*(y):=\sup_{x\in\cE}\{\ip{x}{y}-f(x)\},
\)
and $f\in \Gamma_0(\cE)$ if and only if $f=f^{**}:=(f^*)^*$ is proper.

\noindent
Given a nonempty $S\subset \cE$, its {\em indicator function} $\delta_S:\cE\to\eR$ is given by $\indicator{x}{S} =0$ for $x\in S$ and
$+\infty$ otherwise.
%\[
%\indicator{x}{S} :=\left\{\begin{array}{rcl}0 & \text{if} & x\in S,\\
%+\infty &  \text{if} & x\notin S.
%\end{array}\right.
%\] 
The indicator of $S$ is convex if and only if  $S$ is a convex set, in which case the {\em normal cone} of $S$ at $\bar x\in S$ is given by
\(
\ncone{\bar x}{S}:=\partial \delta_S(\bar x)=\set{v\in \cE}{\ip{v}{x-\bar x}\leq 0 \;(x\in S)}.
\)
The  {\em support function} $\sigma_S:\cE\to \eR$ and the {\em gauge function} $\gamma_S:\cE\to\eR$ of a nonempty set $S\subset \cE$ are given   respectively by 
\(
 \support{x}{S}:=\sup_{v\in S}\ip{v}{x}\AND \gauge{x}{S}:=\inf\set{t \geq  0}{x\in tS}.
\)
Here we use the standard convention that  $\inf \emptyset=+\infty$. 
%It is easy to see that 
%\begin{equation}\label{eq:support gauge blindness}
%\sigma_S=\sigma_{\clco S} \AND  \gamma_{\co S} =\gamma_{\clco S}, 
%\end{equation}
%%Moreover, given 
%and for two (nonempty) sets $S,T\subset\cE$ and $x\in\cE$
%%, it is easily seen that 
%%  we have 
%%\begin{eqnarray*}
%% \sup_{v\in S}\ip{v}{x}+\sup_{w\in T}\ip{w}{x}
%% =  \sup_{(v,w)\in S\times T}\ip{v+w}{x}
%%=  \sup_{z\in S+T}\ip{z}{x},
%%%& = & \sigma_{S+T}(x),
%%\end{eqnarray*} 
%%thus we  find that
%\begin{equation}\label{eq:Support Additivity}
%\sigma_{S}+\sigma_{T} =\sigma_{S+T}.
%\end{equation}

\noindent
%Suppose 
Given $C\subset \cE$ is closed and convex, the {\em barrier cone} of $C$
is defined by 
$
\barrier C:=\dom \sigma_{C}.
$
The closure of the barrier cone of $C$ and the horizon cone are paired in polarity, i.e. 
\begin{equation}\label{eq:BarrierPolarity}
(\barrier C)^\circ=C^\infty\AND \cl(\barrier C)=(C^\infty)^\circ.
\end{equation}
\noindent
  For two functions $f_1,f_2:\cE\to \rbar$, their  {\em infimal convolution} %$f_1\infconv f_2$   is defined by
is
\[
(f_1\infconv f_2)(x):=\inf_{y\in \cE}\{f_1(x-y)+f_2(y)\}\quad (x\in\cE).
\]

%See Theorem \ref{prop:ExtSumRule General} for some refined calculus rules for infimal convolutions.
%It can easily be seen that 
%\begin{equation}\label{eq:DomInfConv}
%f_1\infconv f_2=f_2\infconv f_1\AND 
%\dom f_1\infconv f_2=\dom f_1+\dom f_2.
%\end{equation}

\subsection*{The generalized matrix-fractional function} 
As noted in the introduction, the GMF function is the support function
of $\cD (A,B)$ given in  \eqref{eq:D}. Hence, we write  
\begin{equation}\label{eq:GMF}
 \varphi(X,V)=\sigma_{ \cD (A,B)} (X,V)
\end{equation}
and also  refer to $\sigma_{ \cD (A,B)}$ as the GMF function.
From \cite[Theorem 4.1]{BuH15},  we obtain the formula
%is the mapping $\varphi:\bE\to \rbar$, 
\begin{equation}\label{eq:DefPhi}
%\sigma_{ \cD (A,B)} (X,V) = 
\varphi(X,V)=
\left\{\begin{array}{lcl}\frac{1}{2} \tr\!\left( \binom{X}{B}^TM(V)^\dagger \binom{X}{B}  \right) 
& {\rm if}  & \rge \binom{X}{B}\subset \rge M(V), \; V\in\cK_A ,\\
+\infty & {\rm else},
\end{array}\right. 
\end{equation}
where 
%$\bE:=\Rnm\times \Sn$ with $\Sn$ being the space of real symmetric $n\times n$ matrices,
%and 
$(A,B)\in \R^{\ell\times n}\times\R^{\ell\times m}$ with $\rge B \subset \rge A$ and $\cK_A$ is the cone of all symmetric matrices that are positive semidefinite with respect to the subspace $\ker A$, i.e.
 \begin{equation}\label{eq:K}
\cK_A:=\set{V\in\Sn}{u^TVu \ge 0\;(u \in \ker A)},
\end{equation}
and $M(V)^\dagger$ is the Moore-Penrose pseudoinverse of the {\em bordered matrix}
\begin{equation}\label{eq:M(V)}
M(V)=\begin{pmatrix}V&A^T\\ A&0\end{pmatrix}.
\end{equation}
The {\em matrix-fractional function} \cite{BoV 04,Dat 05}
is obtained by setting $A$ and $B$ to zero.

The GMF function $\vphi=\sigma_{\cD(A,B)}$ appears in Burke and Hoheisel \cite{BuH15} and Burke, Hoheisel and Gao \cite{BGH17}, where it is shown that
\begin{equation}\label{eq:dom support D}
\begin{aligned}
%\dom \sigma_{\cD(A,B)}
\dom \vphi=\dom \partial \vphi
 &=
\set{(X,V)\in \bE}{  \rge \binom{X}{B}\subset \rge M(V), \; 
V\in\cK_A   },
\\
 \inter (\dom \vphi)&=
\set{(X,V)\in \bE}{  \rge \binom{X}{B}\subset \rge M(V), \; 
V\in\inter \cK_A   }\neq \emptyset.
\end{aligned}
\end{equation}
For a deeper understanding of the support function $\vphi$, 
a description of the closed convex hull of the (nonconvex) set $\cD(A,B)$ is critical.  An arduous representation of $\clco \cD(A,B)$ 
%relying mainly on Carath\'{e}odory's theorem 
was obtained in \cite[Proposition 4.3]{BuH15}. 
A much simpler and more versatile expression 
was proven in \cite[Theorem 2]{BGH17}, see below.  
The key ingredient in the newer expression is the (closed, convex) cone $\cK_A$ defined in \eqref{eq:K}, which  reduces to $\bS_+^n$ when $ A=0$. 
We briefly summarize the geometric and topological properties of $\cK_A$ useful to our study. These follow from \cite[Proposition 1]{BGH17} (by setting   $\cS=\ker A$).

\begin{proposition}\label{prop:K_A} For $A\in \R^{\ell\times n}$ let $P\in \R^{n\times n}$ be the orthogonal projection onto $\ker A$ and let  $\cK_A$ be given by \eqref{eq:K}. Then the following hold:
\begin{itemize}
\item[(a)]  $\cK_A=\set{V\in \bS^n}{PVP\succeq 0}$.
\item[(b)]
$
\cK_A^{\circ}=\cone \set{-vv^T}{v\in \ker A}=\set {W\in \bS^n}{W=PWP\preceq  0}
%=(-\cK_A)\cap\Ker_nA\.
$
\item[(c)]
$
\inter \cK_A= \set{V\in\Sn}{u^TVu> 0\;(u\in \ker A\setminus\{0\})}.
$

\end{itemize}
\end{proposition}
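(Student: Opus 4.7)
\medskip\noindent\textbf{Proof sketch.} My plan is to prove the three parts in order, exploiting the fact that the orthogonal projector $P$ onto $\ker A$ satisfies $P = P^T = P^2$ and $\rge P = \ker A$.

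For \textbf{(a)}, I would use the equivalence $u \in \ker A \iff u = Pu$. If $V \in \cK_A$, then for every $x \in \R^n$ one has $Px \in \ker A$, whence $x^T(PVP)x = (Px)^T V(Px) \ge 0$, so $PVP \succeq 0$. Conversely, if $PVP \succeq 0$, then for any $u \in \ker A$, $u^T V u = u^T(PVP)u \ge 0$, giving $V \in \cK_A$.

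For \textbf{(b)}, observe first that $V \in \cK_A \iff \ip{V}{-uu^T} \le 0$ for all $u \in \ker A$, which identifies $\cK_A$ as the (cone) polar of $\cC := \cone\set{-uu^T}{u \in \ker A}$. The bipolar theorem would then yield $\cK_A^\circ = \cl \cC$, so the key step is to show that $\cC$ is already closed and coincides with $\{W \in \Sn : W = PWP \preceq 0\}$. For this I would fix an $n \times k$ matrix $Q$ whose columns form an orthonormal basis of $\ker A$, so that $P = QQ^T$. Then every $u \in \ker A$ has the form $u = Qz$, and thus
\[
\cC = \set{-QZQ^T}{Z \in \bS^k_+}
\]
is the image of the closed convex cone $-\bS^k_+$ under the linear map $Z \mapsto QZQ^T$. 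Since $Q$ has orthonormal columns, this map is injective (and its inverse on its range is $W \mapsto Q^T W Q$), so $\cC$ is closed. The identity $PWP = W$ characterizes membership in $\rge(Q \cdot Q^T)$, and for such $W$ we have $W \preceq 0 \iff Q^TWQ \preceq 0$; this gives the second description and completes (b).

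For \textbf{(c)}, the inclusion ``$\subset$'' follows from a perturbation argument: given $V \in \inter \cK_A$ and $u \in \ker A \setminus \{0\}$, choose $\eps > 0$ so that $V - \eps\,\frac{uu^T}{\|uu^T\|} \in \cK_A$; testing this matrix against $u$ yields $u^T V u > 0$. For ``$\supset$'', suppose $u^T V u > 0$ for all nonzero $u \in \ker A$. By part (a) this says $PVP$ is positive definite on $\ker A = \rge P$; equivalently, the symmetric matrix $Q^T V Q \in \bS^k$ is positive definite, with smallest eigenvalue $c > 0$. For any perturbation $\Delta \in \bS^n$ with $\|\Delta\| < c$ (operator norm), $Q^T(V+\Delta)Q$ remains positive definite, so $V + \Delta \in \cK_A$ by (a); hence $V \in \inter \cK_A$.

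The only delicate step is verifying closedness of $\cC$ in (b); everything else is a direct translation between $\ker A$-quantifiers and the reduced matrices $PVP$ or $Q^T V Q$.
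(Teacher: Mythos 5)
Your proof is correct. Note that the paper itself does not prove this proposition at all: it obtains all three parts as the special case $\cS=\ker A$ of a more general result on cones of matrices positive semidefinite with respect to a subspace (\cite[Proposition 1]{BGH17}). Your argument is therefore a genuinely different, self-contained route: you reduce everything to the compressed matrices $PVP$ and $Q^TVQ$, where $Q$ is an orthonormal basis matrix for $\ker A$ with $P=QQ^T$. Each step checks out. In (a) the translation between the quantifier over $\ker A$ and the condition $PVP\succeq 0$ is immediate from $u=Pu$. In (b) you correctly identify $\cK_A$ as the polar of $\cC=\cone\set{-uu^T}{u\in\ker A}$, so the bipolar theorem reduces the claim to closedness of $\cC$, and your observation that $\cC=\set{-QZQ^T}{Z\in\bS^k_+}$ is the image of a closed cone under the \emph{injective} linear map $Z\mapsto QZQ^T$ (with left inverse $W\mapsto Q^TWQ$) does settle closedness --- this is exactly the delicate point, since images of closed convex cones under general linear maps need not be closed. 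In (c) both inclusions are handled cleanly: the perturbation $V-\eps\,uu^T/\norm{uu^T}$ gives the forward inclusion, and the eigenvalue bound on $Q^TVQ$ together with $\norm{Q^T\Delta Q}\le\norm{\Delta}$ (in operator norm) gives the reverse. What your approach buys is independence from the companion paper; what the citation buys is brevity and the more general statement for arbitrary subspaces $\cS$, which \cite{BGH17} uses elsewhere. The only degenerate case worth a parenthetical remark is $\ker A=\{0\}$ (i.e.\ $k=0$), where $\cK_A=\bS^n$ and all three claims are trivial.
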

%\begin{proof} Apply \cite[Proposition 1]{BGH17} to $\cS=\ker A$.
%\end{proof}

\noindent
The central result in Burke, Hoheisel and Gao \cite{BGH17} now follows.
%is the following characterization of $\clco\cD(A,B)$.
%the closed convex hull of $\cD(A,B)$  from \eqref{eq:D}. 
\begin{theorem}[{\cite[Theorem 2]{BGH17}}]\label{th:clco D}
	Let $\cD(A, B)$ be  given by \eqref{eq:D}. Then
	%=\Omega(A,B)$, where
	\begin{equation*}\label{eq:omega}
	\clco \cD(A,B)=\Omega(A,B) :=\set{(Y,W)\in\bE}
	{AY=B \! \!\AND\!\! \half YY^T+W\in \cK_A^\circ}.	
	\end{equation*}
\end{theorem}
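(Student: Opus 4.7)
My plan proceeds by a two-way inclusion, using the fact that $\varphi$ is by definition the support function of $\cD(A,B)$. The forward inclusion is immediate: any $(Y,-\half YY^T)\in\cD(A,B)$ satisfies $AY=B$ and $\half YY^T+(-\half YY^T)=0\in\cK_A^\circ$. To deduce $\clco\cD(A,B)\subset\Omega(A,B)$ from this, I need $\Omega(A,B)$ to be closed and convex. Closedness is immediate, as $\Omega(A,B)$ is the preimage of $\{B\}\times\cK_A^\circ$ under the continuous map $(Y,W)\mapsto(AY,\half YY^T+W)$. For convexity, given $(Y_i,W_i)\in\Omega(A,B)$ and $\lambda\in[0,1]$, setting $(Y,W):=\lambda(Y_1,W_1)+(1-\lambda)(Y_2,W_2)$ and $S_i:=\half Y_iY_i^T+W_i\in\cK_A^\circ$, a direct expansion yields
\begin{equation*}
\half YY^T+W \;=\; \lambda S_1+(1-\lambda)S_2 \;-\; \tfrac{\lambda(1-\lambda)}{2}(Y_1-Y_2)(Y_1-Y_2)^T.
\end{equation*}
Since $A(Y_1-Y_2)=0$, every column of $Y_1-Y_2$ lies in $\ker A$, so by Proposition \ref{prop:K_A}(b) the correction term lies in $\cK_A^\circ$; convexity of the cone $\cK_A^\circ$ closes the argument.

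For the reverse inclusion, since $\Omega(A,B)$ and $\clco\cD(A,B)$ are both closed and convex, it suffices to match their support functions. Reparametrizing the constraints of $\Omega(A,B)$ via $W=Z-\half YY^T$ with $Z\in\cK_A^\circ$ decouples the supremum:
\begin{equation*}
\sigma_{\Omega(A,B)}(X,V) \;=\; \sup_{AY=B}\Bigl\{\ip{X}{Y}-\half\tr(Y^TVY)\Bigr\} \;+\; \sigma_{\cK_A^\circ}(V).
\end{equation*}
Since $\cK_A^\circ$ is a closed convex cone with polar $\cK_A$ (bipolar theorem), $\sigma_{\cK_A^\circ}=\delta_{\cK_A}$. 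The first summand equals $\sigma_{\cD(A,B)}(X,V)=\vphi(X,V)$ by direct parametrization of $\cD(A,B)$, and by \eqref{eq:DefPhi} this is already $+\infty$ whenever $V\notin\cK_A$, so the $\delta_{\cK_A}$ contribution is absorbed. Therefore $\sigma_{\Omega(A,B)}=\vphi=\sigma_{\cD(A,B)}$, which forces the two closed convex sets to coincide.

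The main obstacle is the convexity of $\Omega(A,B)$: the quadratic nonlinearity $Y\mapsto YY^T$ produces a negative semidefinite correction term under convex combinations, and it is not transparent a priori that this term lives in the specific cone $\cK_A^\circ$. The crucial observation is that the affine constraint $AY_i=B$ puts every column of $Y_1-Y_2$ into $\ker A$, which via the representation $\cK_A^\circ=\cone\{-vv^T:v\in\ker A\}$ from Proposition \ref{prop:K_A}(b) is exactly what is needed to absorb the correction. This is precisely why $\cK_A^\circ$, rather than the smaller cone $\bS^n_-$, enters the definition of $\Omega(A,B)$.
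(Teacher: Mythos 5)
Your proof is correct. Note first that the paper does not actually prove this statement: it is imported verbatim as \cite[Theorem 2]{BGH17}, so there is no in-paper argument to compare against, and your proposal supplies a self-contained derivation from the facts the paper does make available (Proposition \ref{prop:K_A}(b) and the closed form \eqref{eq:DefPhi}, both of which predate \cite{BGH17} and so introduce no circularity). The two halves both check out. For convexity of $\Omega(A,B)$, the identity $\half YY^T+W=\lambda S_1+(1-\lambda)S_2-\frac{\lambda(1-\lambda)}{2}(Y_1-Y_2)(Y_1-Y_2)^T$ is right, and since $A(Y_1-Y_2)=0$ the correction term is a nonnegative combination of matrices $-vv^T$ with $v\in\ker A$, hence lies in $\cK_A^\circ$; this is the genuinely nontrivial observation. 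For the reverse inclusion, the substitution $W=Z-\half YY^T$ does decouple the supremum over the product set $\set{Y}{AY=B}\times\cK_A^\circ$ (both factors nonempty by the standing assumption $\rge B\subset\rge A$), $\sigma_{\cK_A^\circ}=\delta_{\cK_A}$ by bipolarity, and the indicator is indeed absorbed because $\dom\vphi\subset\Rnm\times\cK_A$ by \eqref{eq:DefPhi}; two nonempty closed convex sets with the same support function coincide. The reference \cite{BGH17} proceeds instead by an explicit geometric construction (exhibiting points of $\Omega(A,B)$ as limits of convex combinations of elements of $\cD(A,B)$, e.g.\ averaging $Y\pm s\,ve_j^T$ with $v\in\ker A$); your support-function route is shorter but buys this at the cost of invoking the nontrivial closed form \eqref{eq:DefPhi}, whereas the constructive proof needs only the definition of $\cD(A,B)$. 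If you wanted to avoid \eqref{eq:DefPhi} entirely, it would suffice to check directly that $\sup_{AY=B}\{\ip{X}{Y}-\half\tr(Y^TVY)\}=+\infty$ whenever $u^TVu<0$ for some $u\in\ker A$, by sending $Y=Y_0+tu e_1^T$ with $t\to\infty$.

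One small slip in your closing remark: the containment goes $\cK_A^\circ\subset\bS^n_-$, so $\bS^n_-$ is the \emph{larger} cone, not the smaller one. The substantive point stands, though: replacing $\cK_A^\circ$ by $\bS^n_-$ in the definition of $\Omega(A,B)$ would preserve convexity but would enlarge the set and break the support-function match (one would pick up $\delta_{\Sn_+}$ instead of $\delta_{\cK_A}$).
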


\noindent
In particular, Theorem \ref{th:clco D} 
%in combination with 
%\eqref{eq:support gauge blindness} 
implies that 
$
\vphi=\sigma_{\cD(A,B)}=\sigma_{\Omega(A,B)},
$
since $\sigma_S=\sigma_{\clco S}$ for all subsets $S$ of a Euclidean space.
This identity is used throughout.  

%Theorem \ref{th:clco D} was used in \cite{} to establish the following results.
%
%\begin{proposition}[The normal cone to $\Omega(A,B)$]\label{prop:ncone Omega}
%	Let $\Omega(A,B)$ be as given by \eqref{eq:omega} and let
%	$(Y, W)\in\Omega(A,B)$. Then %$\ncone{(Y, W)}{\Omega(A,B)}$ is given by
%	\[\ncone{Y, W}{\Omega(A,B)}=
%	\set{(X,V)}%\in \R^{n\times n}\times \Sn}
%	{\begin{aligned}
%	V\in\cK_A,\ \ip{V}{\half YY^T+W}=0\\ \AND \rge (X-VY)\subset(\ker A)^\perp
%	\end{aligned}}%\rge A^T}.
%	\]
%\end{proposition}
%
%\begin{corollary}[The subdifferential of $\sig_{\cD(A,B)}$]\label{cor:Subdiff}
%	Let $\cD(A,B)$ be as given in \eqref{eq:D}. Then, for all $(X,V)\in\dom \sig_{\cD(A,B)}$
%	(see \eqref{eq:dom support D}) we have 
%	\[
%	\sd \support{X,V}{\cD(A,B)}=
%	\set{(Y, W)\in\Omega(A, B)}{\begin{aligned}
%		&\exists Z\in\bR^{\ell\times m}, X=VY+A^TZ,\\
%		&\ip{V}{\half YY^T+W}=0\end{aligned}}
%	\]
%\end{corollary}

\section{Infimal projections of the generalized matrix-fractional function}\label{sec:Main}

\noindent
We now focus on infimal projections involving the GMF function. Consider  
\begin{equation}\label{eq:psi}
\psi:\bE\to\rbar,\quad \psi(X,V)=\varphi(X,V)+h(V), %\support{X,V}{\Omega(A,B)}+h(V),
\end{equation} 
where  $\varphi\in \Gamma_0(\bE)$ is given in \eqref{eq:GMF support} and
$h\in \Gamma_0(\bS^n)$.
%and $\Omega(A,B)$ 
%is given by Theorem \ref{th:clco D}
Our primary object of study is the infimal projection of the sum $\psi$  in the variable $V$  under the standing assumption that
$\rge B\subset \rge A$, i.e. $\set{Y\in\Rnm}{AY=B}\ne\emptyset$: % we analyze the marginal function
\begin{equation}\label{eq:p}
p:\R^{n\times m}\to\rbar, \quad p(X)=\inf_{V\in\bS^n} \psi(X,V).
\end{equation}

\noindent
We lead with some elementary observations.

\begin{lemma}[Domain of $p$]\label{lem:p}  Let $p$ be defined by \eqref{eq:p}. Then the following hold: 

\begin{itemize}
\item[(a)] $p$ is convex.
\item[(b)] $\dom p=\set{X\in \R^{n\times m}}{\exists V\in \cK_A\cap \dom h: \;\rge \left(\begin{smallmatrix}  X\\B \end{smallmatrix}\right)\subset \rge M(V)}.$
In particular, $\dom p\neq \emptyset$ if and only if $\dom h\cap \cK_A\neq \emptyset$. 
\end{itemize}
Moreover, if $\dom p\neq \emptyset$ then the following hold: 
\begin{itemize}
\item[(c)]  If $B=0$ (e.g. if  $A=0$) then $\dom p$ is a subspace, hence relatively open.
\item[(d)] If $\rank A=\ell$ (full row rank) and 
$\dom h\cap\inter \cK_A\ne\emptyset$, then $\dom p = \R^{n\times m}$.
\item[(e)] If $\dom h\cap \cK_A\neq \emptyset$ and 
$(\dom h)^\infty\cap \cK_A= \{0\}$,
%\footnote{Although these conditions 
%are strong, they are useful (see Definition \ref{def:CQ})}
then $p$ is proper, hence $p\in \Gamma$.
\end{itemize} 
\end{lemma}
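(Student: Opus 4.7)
The plan is to unpack (a)--(e) by combining the closed-form domain description \eqref{eq:dom support D} with standard preservation properties of partial infimal projections.

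For (a) and the main part of (b), the argument is bookkeeping: $\psi(X,V)=\vphi(X,V)+h(V)$ is jointly convex on $\bE$ (lift $h$ trivially in the $X$ variable), so its partial infimal projection $p$ in $V$ is convex in $X$; and $X\in\dom p$ iff $\psi(X,V)<+\infty$ for some $V$, which by \eqref{eq:dom support D} is exactly the stated condition. For the ``in particular'' assertion in (b), given $V\in\cK_A\cap\dom h$, pick $Z\in\Rnm$ with $AZ=B$ (possible since $\rge B\subset\rge A$) and set $X:=VZ$; then $M(V)\binom{Z}{0}=\binom{VZ}{AZ}=\binom{X}{B}$, so $X\in\dom p$.

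For (c), when $B=0$ the range condition on $\binom{X}{B}$ depends only on $\rge X$, hence is invariant under real scalar multiplication of $X$; moreover $0\in\dom p$ since any $V\in\cK_A\cap\dom h$ witnesses $X=0$. Thus $\dom p$ is closed under scalar multiplication by every $\lambda\in\R$, and combining this with convexity from (a) forces $\dom p$ to be a linear subspace. For (d), fix $V\in\dom h\cap\inter\cK_A$; by Proposition \ref{prop:K_A}(c), $V$ is positive definite on $\ker A$, and since $A$ has full row rank, the saddle-point matrix $M(V)$ is nonsingular (a standard fact). Hence $\rge M(V)=\R^{n+\ell}$ and the domain condition from (b) is vacuous, giving $\dom p=\Rnm$.

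For (e), the key observation is that $\cK_A\cap\dom h$ is bounded: its horizon cone is contained in $\cK_A^\infty\cap(\dom h)^\infty=\cK_A\cap(\dom h)^\infty=\{0\}$, using that $\cK_A$ is a closed convex cone so $\cK_A^\infty=\cK_A$, together with the hypothesis. Now fix any $X\in\dom p$; the slice $\psi(X,\cdot)$ is closed proper convex (properness follows from $\vphi,h\in\Gamma_0$ and (b)) with effective domain inside the bounded set $\cK_A\cap\dom h$, so its lower semicontinuous extension is a proper convex function on a compact set and therefore attains a finite infimum. Thus $p(X)\in\R$ throughout $\dom p$, and together with $\dom p\ne\emptyset$ this yields $p\in\Gamma$. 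I expect the main obstacle to be this final step: cleanly translating the horizon-cone hypothesis into boundedness of the effective domain of $\psi(X,\cdot)$ and then invoking lower semicontinuous attainment; the remaining items fall out directly from \eqref{eq:dom support D} and Proposition \ref{prop:K_A}.
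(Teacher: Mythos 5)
Your argument is correct, and parts (a)--(d) follow the paper's proof essentially verbatim: convexity of the partial infimal projection, the domain identity via \eqref{eq:dom support D} with the witness $X=VZ$ for $AZ=B$, scaling-invariance of the range condition plus convexity for the subspace claim, and invertibility of $M(V)$ (the paper cites \cite[Proposition 3.3]{BuH15} for the ``standard fact'' you invoke). The only genuine divergence is in (e). The paper argues by contradiction: from $p(X)=-\infty$ it extracts a sequence $\{V_k\}$ with $\psi(X,V_k)\to-\infty$, deduces $\norm{V_k}\to\infty$ from closedness and properness of $\vphi$ and $h$, and normalizes to produce a nonzero element of $(\dom h)^\infty\cap\cK_A$. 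You instead convert the hypothesis directly into boundedness of $\cK_A\cap\dom h$ via $(\cK_A\cap\dom h)^\infty\subset\cK_A^\infty\cap(\dom h)^\infty=\{0\}$ and then apply lower semicontinuity on the compact closure. The two arguments are two sides of the same horizon-cone coin, but yours is slightly cleaner in that it only needs the (always valid) inclusion for horizon cones of intersections rather than the equality the paper writes, and it delivers attainment of the infimum as a bonus --- in effect anticipating the paper's later observation (Lemma~\ref{lem:PCQ Equiv}(b)) that this hypothesis, BPCQ, is equivalent to $\dom h\cap\cK_A$ being nonempty and bounded. One cosmetic quibble: $\psi(X,\cdot)$ is already lower semicontinuous, so no ``lower semicontinuous extension'' is needed --- just take the infimum over the compact set $\cl(\cK_A\cap\dom h)$ and invoke Weierstrass together with properness.
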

%\begin{remark}
%The condition given in part c) is revisited in Definition \ref{def:CQ}) where it
%is called the \emph{boundedness primal constraint qualification} (BPCQ).
%It is the strongest of the constraint qualifications we discuss.
%\end{remark}
\begin{proof} (a) The convexity follows from, e.g.,  \cite[Proposition 2.22]{RoW98}.  
\smallskip

\noindent
(b)   We have $X\in \dom p$  if and only if there is a $V\in\Sn$ such that 
$(X,V)\in \dom \psi=(\dom\vphi)\cap (\Rnm\times \dom h)$. Hence the 
representation for $\dom p$ follows from 
the one of $\dom \vphi $ in  \eqref{eq:dom support D}.  
This representation for $\dom p$ tells us that $\dom p\neq \emptyset$ implies
that $\dom h\cap \cK_A\neq \emptyset$. On the other hand, if $V\in \dom h\cap \cK_A$,
then $(VY,V)\in \dom \psi$ for any $Y\in \Rnm$ satisfying $AY=B$, and so 
 $(VY,V)\in \dom p\ne \emptyset$.
 \smallskip

\noindent
(c)  If $B=0$, we have  $X\in \dom p $ if and only if $\lin \{X\}\subset \dom p.$     Since $\dom p$ is also convex, it is a subspace, see, e.g.,  \cite[Proposition 3.8]{RoW98}.
 \smallskip

%In particular,  $\dom p$ is a cone with $-\dom p=\dom p$.
%Since $\dom p$ is also convex  (as $p$ is a  convex function), $\dom p$ is a symmetric, convex cone hence a subspace, cf. \cite[Proposition 3.8]{RoW98}.

\noindent
(d) By the description of  $\inter \cK_A$ in Proposition \ref{prop:K_A} (c), the assumptions imply that there exists $V\in\dom h\cap \cK_A$ such that $M(V)$ is invertible, see \cite[Proposition 3.3]{BuH15}. This readily gives the desired statement in view of (b).

%Let $X\in \Rnm$ and let $V\in\dom h\cap\ri \cK_A$ so that $V$ is positive definite
%on $\ker A$ by Proposition \ref{prop:K_A}c). We show that $M(V)$ is invertible
%which proves the result by part b). Let $(u^T, v^T)^T\in\ker M(V)$. Then
%$0=Vu+A^Tv$ and $0=Au$. Multiplying the first equation on the left by $u^T$
%and using the second equation
%gives $0=u^TVu+u^TA^Tv=u^TVu$. But the second equation tells us that 
%$u\in\ker A$, so $u=0$. But then $A^Tv=0$, so $v=0$ by the rank hypothesis.
%Consequently, $\ker M(V)=\{0\}$ and $M(V)$ is invertible.
\smallskip 

\noindent
(e) By part (b), $\dom p\neq \emptyset$. Hence let $X\in \dom p$, i.e. 
there is a $V\in \cK_A\cap \dom h$  such that  $ \rge \left(\begin{smallmatrix}  X\\B \end{smallmatrix}\right)\subset \rge M(V)$. 
If $p(X)=-\infty$,
there is a sequence $\{V_k\in \bS^n\cap\dom h\}$ with 
$\{(X,V_k)\in \dom\varphi\}$ such that $\psi(X,V_k)\goesto -\infty$. This implies that 
$\varphi(X,V_k)\to -\infty$ or $h(V_k)\to -\infty$. In either case, this tells
us that $\norm{V_k}\goesto \infty$ since both $\varphi$ and $h$ are closed  and proper. Consequently, there is a subsequence $J\subset\N$,
and a matrix $\widehat V\in\Sn$  such that
$(V_v/\norm{V_k})\overset {J}{\goesto}\widehat V$. Hence $0\neq \widehat V\in (\dom h\cap \cK_A)^\infty=(\dom h)^\infty\cap \cK_A$,
which contradicts the hypothesis.
\end{proof}

\noindent
We give two examples to illustrate various statements in Lemma  \ref{lem:p}. The first shows that an assumption of the type   in part (e) is required to establish that $p$ is proper.

%Statement c) in Lemma \ref{lem:p} may fail in case that $B\neq 0$. We illustrate this in the following example. 

\begin{example}[$p$ improper]\label{ex:p not proper} 
Let $m=n=1$, $A=0$, $B=0$ and $h(v)=-v$. Since $v^\dagger=\left\{\begin{array}{rcl} \frac{1}{v} & \text{if} & v\neq 0,\\
0 & \text{if} & v=0,
 \end{array}\right.$
 we have
$
\varphi(x,v)=\left\{\begin{array}{rcl} \frac{x^2}{2v} & \text{if} & v>0,\\
0 & \text{if} & v=0,\\
+\infty & \text{if} & v<0\\
 \end{array}\right.\quad ((x,v)\in \R^2).
$
Therefore, $p\equiv -\infty$ since
\[
p(x)=\inf_{v\in \R} \varphi(x,v)+h(v)=\inf_{v>0}\left\{ \frac{x^2}{2v}-v\right\}=-\infty \quad (x\in \R).
\]
\end{example}

\noindent
The properness condition given in  Lemma  \ref{lem:p} (e) is revisited in 
Definition \ref{def:CQ} where it
is called  \emph{boundedness primal constraint qualification} (BPCQ).
It is the strongest of the constraint qualifications we discuss. 
%Additional conditions ensuring the properness of $p$ are given in 
%Theorem \ref{th:inf-proj psi}.

The second example shows $\dom p$ may not be relatively open 
if $B\neq 0$.

\begin{example}[$\dom p$  not relatively open]\label{ex:Counterex dom p} 
Let $A=\left(\begin{smallmatrix}1 & 1 \\ 1& 1\end{smallmatrix}\right)$ and $b=\left(\begin{smallmatrix}1 \\ 1\end{smallmatrix}\right)$. Then 
\(
\ker A=\lin \{\left(\begin{smallmatrix}1 \\ -1\end{smallmatrix}\right)\}
\) and \(\cK_A=\set{\left(\begin{smallmatrix}v& w \\ w& u\end{smallmatrix}\right)}{v+u\geq 2w}.
\)
Moreover, put $\bar V:=\left(\begin{smallmatrix} 2 & 1 \\ 1& 0\end{smallmatrix}\right)$  and define  
\(
\cV:=[0,1]\cdot \bar V=\set{\left(\begin{smallmatrix}2w& w \\ w& 0\end{smallmatrix}\right)}{w\in [0,1]}\subset \bS^2.
\)
Then $\cV$ is convex and compact. 
Let $h\in \Gamma_0(\bS^2)$ be any function with $\dom h=\cV$. Note that 
\(
\dom h\cap \cK_A=\cV.
\)
Hence
\begin{eqnarray*}
x\in \dom p & \Longleftrightarrow &\exists w\in [0,1]:  \; \left(\begin{smallmatrix} x \\ b\end{smallmatrix}\right)\in \rge \left(\begin{smallmatrix} w\bar V  & A^T \\
A& 0\end{smallmatrix}\right)\\
% \exists w\in [0,1]:  \; \left(\begin{smallmatrix} x \\ 1 \\ 1\end{smallmatrix}\right)\in \rge \left(\begin{smallmatrix}2w & w & 1 & 1
%\\w& 0&   1& 1\\
%1& 1 & 0& 0\\
%1 & 1& 0& 0
%\end{smallmatrix}\right)\\
& \Longleftrightarrow & \exists w\in [0,1], r,s\in \R^2: \; \begin{array}{rcl} x& 
= &w\bar V r+A^Ts,\\
b & = & Ar
\end{array}\\
& \Longleftrightarrow & \exists w\in[0,1], \lambda, \mu \in \R:\; x=w\left(\begin{smallmatrix}2 & 1\\1& 0\end{smallmatrix}\right)\left[\left(\begin{smallmatrix}0 \\ 1\end{smallmatrix}\right)+\lambda \left(\begin{smallmatrix}1\\-1\end{smallmatrix}\right)\right]+\mu\left(\begin{smallmatrix}1\\1\end{smallmatrix}\right)\\
& \Longleftrightarrow & \exists w\in[0,1], \gamma\in \R:\;  x=w \left(\begin{smallmatrix}1\\0\end{smallmatrix}\right)+\gamma \left(\begin{smallmatrix}1\\1\end{smallmatrix}\right).
\end{eqnarray*}
Therefore,
\(
\dom p=[0,1]\cdot \left(\begin{smallmatrix}1\\0\end{smallmatrix}\right)+\lin \{\left(\begin{smallmatrix}1\\1\end{smallmatrix}\right)\},
\)
and hence 
\(
\ri(\dom p)=(0,1)\cdot \left(\begin{smallmatrix}1\\0\end{smallmatrix}\right)+\lin \{\left(\begin{smallmatrix}1\\1\end{smallmatrix}\right)\},
\)
so that $\dom p$ is clearly not relatively open. 
\end{example}

\noindent
The preceeding  example, shows that $\dom p$ may fail to be  a subspace if $B\neq 0$, hence  this   assumption in  Lemma  \ref{lem:p}(c)  is not superfluous. On the other hand,
 Lemma \ref{lem:p} (d) and  Example \ref{ex:Empty Subdiff} (a)  illustrate that  the condition $B=0$ is only sufficient but not necessary for $\dom p$ to be a subspace.

%\subsection{Properties of $\psi$} 
\subsection{The functions $\psi$, $\psi^*$, and their subdifferentials}\label{ss:psi}
The study of the infimal projection $p$ in \eqref{eq:p}  requires an 
understanding of the properties of the function $\psi$ from  \eqref{eq:psi}, 
its conjugate $\psi^*$, and their subdifferentials. For this we make extensive use of the condition
\begin{equation}\tag{CCQ}
\ri(\dom h)\cap \inter \cK_A\neq \emptyset,
\end{equation}
which we refer to as the \emph{conjugate constraint qualification}.  
As a direct consequence of the {\em line segment principle} (cf. \cite[Theorem 6.1]{RTR70}), we have
\begin{equation}\label{eq:simple ccq}
\ri(\dom h)\cap \inter \cK_A\neq \emptyset
\ \Longleftrightarrow\ 
\dom h\cap \inter \cK_A\neq \emptyset.
\end{equation}

%We now study the properties of the function $\psi$ from \eqref{eq:psi}. To this end, foreshadowing our in-depth analysis in Section \ref{sec:CQ}, we will say the {\em CCQ} holds if 
%\[
%\ri(\dom h)\cap \inter \cK_A\neq \emptyset,
%\]
%cf. Definition \ref{def:CQ} iv) below.

\begin{lemma}[Conjugate of $\psi$]\label{lem:conj psi}
Let $\psi$ be given as in \eqref{eq:psi}  and define 
\begin{equation}\label{eq:conj psi}
\eta:(Y,W)\in \bE\mapsto \inf_{T\in\Sn}h^*(W-T)+\delta_{\Omega(A,B)}(Y,T).
\end{equation}
Then 
\begin{equation}\label{eq:dom eta}\begin{aligned}
\dom \eta&= \Omega(A,B)+(\{0\}\times \dom h^*)\\
&=\set{(Y,W)}{AY=B, \; \left(-\half YY^T+\cK_A^\circ\right)\cap (W-\dom h^*)\neq \emptyset },
\end{aligned}
\end{equation}
and the following hold:
\begin{itemize}
\item[(a)] If $\psi\not\equiv +\infty$, then $\psi\in\Gamma_0(\bE)$.
\item[(b)]  If $\dom{h}\cap \cK_A \ne\emptyset$  then $\psi, \psi^*\in\Gamma_0(\bE)$ with  $\psi^*=\cl \eta$.
%\item[c)] We have  $\psi^*=\cl \eta$.
\item[(c)]  Under CCQ , we have $\psi^*=\eta$. Moreover, in this case, the infimum 
in the definition of $\eta$
is attained on the whole domain, i.e.
%that is, for all $(\bar Y,\bar W)\in \dom \psi^*$ 
%we have
\begin{equation}\label{eq:def T}
%\begin{aligned}
%\!\cT(\barY, \barW)
\kS(\barY, \barW)
:=\argmin_{T\in \Sn}\set{h^*(\barW\!-\!T)}{(\bar Y,T)\in\Omega(A,B)}\\
%\begin{aligned}
%\emptyset\ne \cT(\barY, \barW)
%:=\argmin\set{h^*(\barW-\barT)}{(\barY,\barT)\in\Omega(A,B)}.
%\\ &\qquad\qquad
%&\!=\!\set{T}{
%(\barY,T)\in \Omega(A,B),\;  \psi^*(\barY, \barW)=h^*( \barW\!-\! T)}.
%\\ &=
%\set{\barT\in\Sn}
%{ \begin{aligned}
%\exists\,(\barX,\barV)\in\bE\ \mbox{s.t.}\ \barW-\barT\in \p h(\barV)
%\\ \AND 
%(\barY,\barT)\in \p\vphi(\barX,\barV) \end{aligned}}
%\end{aligned}
\end{equation}
is nonempty for all $(\bar Y,\bar W)\in \dom \psi^*$ .
\item[(d)]
Under CCQ,  $\dom\p \psi^*=\set{(Y,W)}{\emptyset\ne \kS(Y,W)}$ and, for
every $(Y,W)\in\dom \p \psi^*$, we have
\[
\p \psi^* (Y,W)\!=\!\set{(X,V)}{
\begin{aligned}&\exists\, T\in\bS^n:\;
V\in\p h^*(W-T)\cap\cK_A,
\\ &\ip{V}{\half YY^T+T}=0,
\;
\rge(X-VY)\!\subset\!(\ker A) ^\perp
\end{aligned}}.
\]
\end{itemize}
\end{lemma}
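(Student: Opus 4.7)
The proof splits into a preliminary domain computation followed by parts (a)--(d), with (d) being the heart of the argument. For the domain formula, observe that $(Y,W)\in\dom\eta$ iff there exists $T$ with $(Y,T)\in\Omega(A,B)$ and $W-T\in\dom h^*$; writing $W = T + (W-T)$ gives the Minkowski-sum form in \eqref{eq:dom eta}, and substituting the definition of $\Omega(A,B)$ from Theorem \ref{th:clco D} produces the second expression.

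Parts (a)--(c) reduce to the calculus of closed convex functions. For (a), $\psi$ is the sum of two closed proper convex functions, namely $\varphi = \sigma_{\Omega(A,B)}$ (Theorem \ref{th:clco D}) and $(X,V)\mapsto h(V)$, and is therefore closed convex and proper whenever $\psi\not\equiv+\infty$. For (b), the conjugate-of-sum identity gives $\psi^* = \cl(\varphi^*\,\infconv\,(h\circ\pi_V)^*)$; since $\varphi^* = \delta_{\Omega(A,B)}$ and a short computation gives $(h\circ\pi_V)^*(Y,W) = \delta_{\{0\}}(Y) + h^*(W)$, the infimal convolution collapses exactly to $\eta$, so $\psi^* = \cl\eta$, and Lemma \ref{lem:p}(b) delivers properness under $\dom h \cap \cK_A\ne\emptyset$. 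For (c), CCQ and \eqref{eq:simple ccq} yield $V_0\in\dom h\cap\inter\cK_A$; by Proposition \ref{prop:K_A}(c) together with \cite[Proposition 3.3]{BuH15} the bordered matrix $M(V_0)$ is invertible, putting $(X,V_0)\in\inter(\dom\varphi)$ for any $X$. This produces a point in $\ri(\dom\varphi)\cap\ri(\R^{n\times m}\times\dom h)$, which is precisely the qualification of \cite[Theorem 16.4]{RTR70} that both removes the closure and forces attainment of the infimum defining $\eta$.

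Part (d) is the real work. Fenchel reciprocity gives $(X,V)\in\partial\psi^*(Y,W)$ iff $(Y,W)\in\partial\psi(X,V)$, and under CCQ the Moreau--Rockafellar sum rule yields $\partial\psi(X,V) = \partial\varphi(X,V) + \{0\}\times\partial h(V)$. Setting $W = T + W_1$ with $W_1\in\partial h(V)$ (equivalently $V\in\partial h^*(W-T)$), the remaining condition becomes $(Y,T)\in\partial\sigma_{\Omega(A,B)}(X,V) = \set{(Y,T)\in\Omega(A,B)}{(X,V)\in N_{\Omega(A,B)}(Y,T)}$ by the standard support-function identity. The crux is therefore an explicit description of $N_{\Omega(A,B)}(Y,T)$: varying $T$ over $-\half YY^T+\cK_A^\circ$ with $Y$ held fixed forces $V\in(\cK_A^\circ)^\circ = \cK_A$ (Proposition \ref{prop:K_A}(b)) and the complementarity $\ip{V}{\half YY^T+T}=0$; varying $Y$ along the affine set $\{Y : AY=B\}$ via $Y+\lambda N$ with $N\in\Ker_m A$, co-adjusting $T$ to preserve $\half YY^T+T$, and letting $\lambda\to 0^\pm$, forces $\ip{X-VY}{N}=0$ for every such $N$, i.e.\ $\rge(X-VY)\subset(\ker A)^\perp$. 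The domain identification $\dom\partial\psi^* = \set{(Y,W)}{\kS(Y,W)\ne\emptyset}$ follows from (c) for the inclusion $\subset$, while for $\supset$ one picks $T\in\kS(Y,W)$ and uses first-order optimality in $T$ to exhibit a valid subgradient pair $(X,V)$ satisfying the formula.

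The main obstacle is the explicit description of $N_{\Omega(A,B)}(Y,T)$: although $\Omega(A,B)$ is convex, its defining inequality $\half YY^T+T\in\cK_A^\circ$ is nonlinear in $Y$, so a direct Lagrangian-multiplier approach would demand its own constraint qualification. The perturbational argument above sidesteps this by exploiting the polarity $\cK_A^{\circ\circ}=\cK_A$ and the linearity of $AY=B$ directly. A secondary care point is that CCQ must simultaneously supply the Moreau--Rockafellar sum rule in (d) and the closure-removal qualification in (c); both flow from the same interior witness $V_0\in\dom h\cap\inter\cK_A$.
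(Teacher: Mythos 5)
Your preliminary domain computation and parts (a)--(c) follow essentially the same route as the paper: write $\psi=\sigma_{\Omega(A,B)}+\hat h$ with $\hat h(X,V)=h(V)$, compute $\hat h^*=\delta_{\{0\}}\oplus h^*$ so that the infimal convolution collapses to $\eta$, and use the relative-interior qualification of \cite[Theorem 16.4]{RTR70} (packaged in the paper as Appendix Theorem~\ref{prop:ExtSumRule General}(a)) to drop the closure and force attainment. One cosmetic slip in (c): you extract $V_0\in\dom h\cap\inter\cK_A$ and then claim a point of $\ri(\dom\varphi)\cap\ri(\R^{n\times m}\times\dom h)$; for that you need $V_0\in\ri(\dom h)$, which CCQ gives you directly (the equivalence \eqref{eq:simple ccq} is only needed in the other direction), so this is trivially repaired. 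Where you genuinely diverge is part (d). The paper applies the extended sum rule to $\p\psi^*$ directly (subdifferential of an exact infimal convolution) and then simply \emph{cites} \cite[Proposition 3]{BGH17} for the explicit form of $N_{\Omega(A,B)}(Y,T)$; you instead go through Fenchel reciprocity and the Moreau--Rockafellar sum rule for $\p\psi$ --- equivalent under CCQ --- and re-derive the normal cone from scratch by perturbing $T$ within $-\half YY^T+\cK_A^\circ$ and $Y$ along $\{AY=B\}$ with a compensating shift in $T$. That self-contained derivation is a real gain in that it avoids a Lagrangian constraint qualification for the nonlinear constraint, and your limiting arguments do correctly produce $V\in\cK_A$, the complementarity $\ip{V}{\half YY^T+T}=0$, and $\rge(X-VY)\subset(\ker A)^\perp$. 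However, note that this perturbation argument only establishes the inclusion of $N_{\Omega(A,B)}(Y,T)$ \emph{into} the set cut out by those three conditions (necessity); the stated formula for $\p\psi^*$ is an equality, so you also need the converse: given the three conditions, verify $\ip{X}{Y'-Y}+\ip{V}{T'-T}\le 0$ for all $(Y',T')\in\Omega(A,B)$, which reduces (using $X-VY\in\Rge_m A^T$ and the complementarity) to $-\half\ip{V}{(Y'-Y)(Y'-Y)^T}\le 0$, valid since the columns of $Y'-Y$ lie in $\ker A$ and $V\in\cK_A$. This step is routine but not optional, and it is exactly the content the paper outsources to \cite[Proposition 3]{BGH17}; your write-up should either carry it out or cite it.
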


\begin{proof}  Note that $\eta(Y,W)<+\infty$ if and only if there is a $W_1,W_2\in\Sn$ such that
$W=W_1+W_2,\ (Y,W_1)\in\Omega(A,B)$ and $W_2\in\dom h^*$, or equivalently,
$(Y,W)\in\Omega(A,B)+(\{0\}\times \dom h^*)$, which in turn is equivalent to
$AY=B,\ T\in -\half YY^T+\cK_A^\circ$ and $T\in W-\dom h^*$ giving 
\eqref{eq:dom eta}.

Define $\map{\hat h}{\bE}{\rbar}$ by
$\hat h(X,V):= h(V)$. Then $\dom \hat h=\R^{n\times m}\times \dom h$ and
$\psi=\vphi +\hat h=\sig_{\Omega(A,B)}+\hat h$.
\smallskip
 
\noindent
(a) The sum of two closed, proper, convex functions (here $\vphi$ and $\hat h$) is closed and convex. It is proper if and (only) if  the sum is not constantly $+\infty$.
%$f_1$ and $f_2$ %that is not identically $+\infty$
%is always closed and convex, and if $f_1+f_2\not\equiv +\infty$,
%then the sum is also proper. In particular, $f_1+f_2$ is proper if
%$\ri\dom f_1\cap\ri\dom f_2\ne \emptyset$
%\cite[Theorem 9.3 ]{RTR70}.% \cite[Theorem 9.3]{RTR70}.
%%, see e.g.  \cite[Proposition 2.1.1]{HUL 01}. 
\smallskip

\noindent
(b) The sum of two proper functions is proper if and only if the domains of both functions intersect. By \eqref{eq:dom support D}, we have 
\(
\dom \hat h\cap \dom \vphi\neq \emptyset
\) 
%\quad \Longleftrightarrow \quad 
if and only if
\(\dom h \cap \cK_A\neq \emptyset.
\)
Therefore, $\psi$ is proper if (and only if) the latter condition holds. Combined with (a) this shows $\psi\in\Gamma_0(\bE)$, 
%and hence, 
and so $\psi^*\in\Gamma_0(\bE)$.
Moreover, by Appendix Theorem  \ref{prop:ExtSumRule General} (a),
%\cite[Theorem 2.3.1]{HUL 01}
\(
\psi^*(Y,W)=\cl\left(\del_{\Omega(A,B)}\infconv\hat h^*\right)(Y,W).
\)
Since  $\hat h^*(Y,W)=\del_{\{0\}}(Y)+h^*(W)$,  
\(
(\del_{\Omega(A,B)}\infconv\hat h^*)(Y,W)=\inf_{(Y,T)\in\Omega(A,B)}h^*(W-T)
=\eta(Y,W),
\)
proving $\psi^*=\cl \eta$.
\smallskip

\noindent
(c)  By \cite[Theorem 4.1]{BuH15}, $\inter (\dom \vphi)=\set{(X,V)}{V\in\inter \cK_A}$ and, by definition,
$\ri(\dom \hat h) = \R^{n\times m}\times \ri (\dom h)$.
Hence 
\begin{equation}\label{eq:CCQ applied}
\ri (\dom \hat h)\cap \ri(\dom \vphi)\neq \emptyset \quad \Longleftrightarrow \quad \ri(\dom h) \cap \inter \cK_A\ne\emptyset.
\end{equation}
Theorem \ref{prop:ExtSumRule General} (a) (applied to $\vphi$ and $\hat h$), CCQ, and \eqref{eq:CCQ applied}
imply $\psi^*=\eta$ with
\begin{equation}\label{eq:def T2}
\emptyset\ne\cT(\barY, \barW)
:=\!\!\!\!\argmin_{(Y,T),(0,W)\in\bE}
\set{h^*(W)}{(Y,T)\in\Omega(A,B),\ Y=\barY,\ \barW=W+T}.
%\\ &\qquad\qquad
%=\set{(\barY,\barW-W)}{
%(\barY,\barT)\in \Omega(A,B)\AND \psi^*(\barY, \barW)=h^*( \barW- \barT)}
\end{equation}
Since 
\begin{equation}\label{eq:T3}
\begin{aligned}
\kS(\barY, \barW)&=\set{T\in\Sn}{[(\barY,T),(0,\barW-T)]\in\cT(\barY, \barW)},\AND
\\
\cT(\barY, \barW)&=\set{[(\barY,T),(0,\barW-T)]}{T\in \kS(\barY, \barW)},
\end{aligned}
\end{equation}
we have $\kS(\barY, \barW)\ne\emptyset$ if and only if 
$\cT(\barY, \barW)\ne\emptyset$.
 %\cite[Theorem 2.3.2]{HUL 01} and c) give the claim under \eqref{eq:CQ psi 1}.
 \smallskip

\noindent
(d) Observe that  $\p \vphi^*=N_{\Omega(A,B)}$ and 
$\p \hat h^*=\R^{n\times m}\times \p h^*$ with $\dom \p \hat h^*=\{0\}\times \dom \p h^*$. 
Then  
part (c) and Theorem \ref{prop:ExtSumRule General} (d) (applied to $\vphi$ and $\hat h$) yield
\[
\begin{aligned}
\p \psi^*(Y,W)&=\set{(X,V)}{
\begin{array}{l}(X,V)\in\p \vphi^*(Y_1,W_1)\cap\p \hat h^*(Y_2,W_2),
\\
(Y,W)=(Y_1,W_1)+(Y_2,W_2)
\end{array}}
\\
&=\set{(X,V)}{\exists\, T\in\R^{n\times m}:\;
(X,V)\in N_{\Omega(A,B)}(Y,T),\ V\in \p h^*(W-T)}.
\end{aligned}
\]
The claim  follows from the representation  for 
$N_{\Omega(A,B)}(Y,T)$ in  \cite[Proposition 3]{BGH17}.
\end{proof}

%\noindent
%We notice yet another interesting relation among the different constraint qualifications introduced in Definition \ref{def:CQ}. 
%
%\begin{corollary}\label{cor:WPCQ + CCQ} Let $p$ be given by \eqref{eq:p}. Then 
%\begin{center}
%CCQ $\wedge$ WPCQ \quad $\Longrightarrow$ \quad PCQ.
%\end{center}
%\end{corollary}
%\begin{proof} 
%
%Let $L:(X,V)\mapsto X$. Then  under CCQ  we have  
%\[
%(L^*)^{-1}(\dom \psi^*)=\dom \psi^*\circ L^*=\dom \eta\circ L^*=(L^*)^{-1}(\dom \eta),
%\] 
%see Lemma \ref{lem:conj psi} d).  Observe that WPCQ implies that there exists $Y\in \R^{n\times m}$ such that 
%\[
%(\eta\circ L^*)(Y)=\eta(Y,0)=\inf_{(Y,T)\in\Omega(A,B)}h^*(-T)<\infty.
%\]
%Therefore $\dom \eta\circ L^*$ is nonempty and hence, so is its relative interior. This implies 
%\[
%\emptyset \neq \ri(\dom \eta\circ L^*)=(L^*)^{-1}(\ri(\dom \eta))=(L^*)^{-1}(\ri(\dom \psi^*)).
%\]
%
%\end{proof}

%\noindent
%We now turn our attention to the subdifferential of $\psi$ which will be used for computing the subdifferential of $p$.

\begin{corollary}[Subdifferential of $\psi$]\label{cor:Lem Subdiff psi}  
Let $\psi$ be given by \eqref{eq:psi} and $\kS$ by \eqref{eq:def T}.  Then the following hold:
\begin{enumerate}
\item[(a)]
If $(\bar Y,\bar W)\in \p \vphi(\bar X,\bar V)+(\{0\}\times \p h(\bar V))$, then
$\kS(\barY, \barW)\ne\emptyset$ and
\begin{equation}\label{eq:T sd equivalence}
\kS(\barY, \barW)=
\set{T\in\Sn}
{\barW-T\in \p h(\barV),\; 
(\barY,T)\in \p\vphi(\barX,\barV)}, 
\end{equation}
where $\p\vphi$ is described in \cite[Corollary 3.2]{BGH17}.
%Moreover, if either condition in \eqref{eq:CQ psi 1} holds, then these sets are identical.
\item[(b)]
 Under CCQ we have 
\[
\dom \p\psi=\set{(X,V)}{V\in \dom \p h\cap \cK_A,\; \rge \binom{X}{B}\subset \rge M(V)}.
\]
Moreover,   for all $(\bar X,\bar V)\in \dom \p\psi$ and all $(\barY,\barW)\in\p\psi(\bar X,\bar V)$,  we have  $\kS(\barY,\barW)\ne\emptyset$
and
\begin{equation}\label{eq:Impl 1}
\begin{aligned}
\p\psi( \barX, \barV)&=\p \vphi(\bar X,\bar V)+(\{0\}\times \p h(\bar V))
\\ &=\set{(\barY,\barW)\in \bE}{\kS(\barY,\barW)\ne\emptyset}.
%\\ &\!\!=\!\set{(\barY,\barW)\in \bE}
%{\exists\ \barT\in \cT(\barY,\barW)\mbox{ s.t. }
%\barW-\barT\in \p h(\barV) \mbox{ and } 
%(\barY,\barT)\in \p\vphi(\barX,\barV) }\! 
%\\ &\!\!=\!\set{(\barY,\barW)\in \bE}
%{\forall\ \barT\in \cT(\barY,\barW),\ 
%\barW-\barT\in \p h(\barV) \mbox{ and } 
%(\barY,\barT)\in \p\vphi(\barX,\barV) }\!.
\end{aligned}
\end{equation}
\end{enumerate}
\end{corollary}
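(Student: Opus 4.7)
My plan for part (a) rests entirely on Fenchel--Young duality, applied componentwise to $\varphi$ and $h$. Assume the hypothesis and decompose $(\bar Y,\bar W)=(\bar Y,\tilde T)+(0,\bar W-\tilde T)$ with $(\bar Y,\tilde T)\in\partial\varphi(\bar X,\bar V)$ and $\bar W-\tilde T\in\partial h(\bar V)$. Because $\varphi=\sigma_{\Omega(A,B)}$ has conjugate $\delta_{\Omega(A,B)}$, Fenchel--Young yields $\varphi(\bar X,\bar V)=\ip{(\bar X,\bar V)}{(\bar Y,\tilde T)}$ together with $(\bar Y,\tilde T)\in\Omega(A,B)$; adding to $h(\bar V)+h^*(\bar W-\tilde T)=\ip{\bar V}{\bar W-\tilde T}$ produces the identity
\[
\psi(\bar X,\bar V)+h^*(\bar W-\tilde T)=\ip{(\bar X,\bar V)}{(\bar Y,\bar W)}.
\]
Using Lemma \ref{lem:conj psi}(b), the inequality $\psi^*(\bar Y,\bar W)\le \eta(\bar Y,\bar W)\le h^*(\bar W-\tilde T)$ combined with the global Fenchel--Young bound $\psi(\bar X,\bar V)+\psi^*(\bar Y,\bar W)\ge\ip{(\bar X,\bar V)}{(\bar Y,\bar W)}$ forces both inequalities to be equalities. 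This simultaneously gives $(\bar Y,\bar W)\in\partial\psi(\bar X,\bar V)$ and $\tilde T\in\kS(\bar Y,\bar W)$, establishing nonemptiness and the inclusion ``$\supset$'' in the characterization of $\kS(\bar Y,\bar W)$.

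For the reverse inclusion in (a), I would take an arbitrary $T\in\kS(\bar Y,\bar W)$ and run the argument in reverse. The identity
\[
\bigl[\varphi(\bar X,\bar V)-\ip{\bar X}{\bar Y}-\ip{\bar V}{T}\bigr]+\bigl[h(\bar V)+h^*(\bar W-T)-\ip{\bar V}{\bar W-T}\bigr]=0
\]
presents a sum of two nonnegative Fenchel--Young gaps (using $(\bar Y,T)\in\Omega(A,B)$ so $\varphi^*(\bar Y,T)=0$), each of which must vanish. This recovers $(\bar Y,T)\in\partial\varphi(\bar X,\bar V)$ and $\bar W-T\in\partial h(\bar V)$, closing the set equality.

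For part (b), I would invoke Appendix Theorem \ref{prop:ExtSumRule General}(d) applied to the pair $(\varphi,\hat h)$ with $\hat h(X,V)=h(V)$. By \eqref{eq:CCQ applied}, CCQ is exactly the relative-interior condition $\ri(\dom\varphi)\cap\ri(\dom\hat h)\ne\emptyset$ required by the sum rule, producing both the identity $\partial\psi(\bar X,\bar V)=\partial\varphi(\bar X,\bar V)+(\{0\}\times\partial h(\bar V))$ and the domain description $\dom\partial\psi=\dom\partial\varphi\cap(\Rnm\times\dom\partial h)$. The claimed explicit formula for $\dom\partial\psi$ then follows from $\dom\partial\varphi=\dom\varphi$ (from \eqref{eq:dom support D}). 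Nonemptiness of $\kS(\bar Y,\bar W)$ for any $(\bar Y,\bar W)\in\partial\psi(\bar X,\bar V)$ is then a direct consequence of part (a) applied to the sum-rule decomposition, and the rightmost set equality in \eqref{eq:Impl 1} is matched against the description $\dom\partial\psi^*=\{(Y,W):\kS(Y,W)\ne\emptyset\}$ from Lemma \ref{lem:conj psi}(d), combined with conjugacy between $\partial\psi$ and $\partial\psi^*$.

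The main obstacle I anticipate is the careful handling of the third set in \eqref{eq:Impl 1}: a priori the condition $\kS(\bar Y,\bar W)\ne\emptyset$ does not reference $(\bar X,\bar V)$, so one must be vigilant to interpret this equality in the sense that every $(\bar Y,\bar W)$ arising from the sum rule at $(\bar X,\bar V)$ has nonempty $\kS$, and conversely that part (a) shows every such $(\bar Y,\bar W)$ with nonempty $\kS$ realizing the dual optimum at the chosen $(\bar X,\bar V)$ admits the required decomposition. The interplay between the CCQ-based sum rule and the attainment statement in Lemma \ref{lem:conj psi}(c) is what unlocks this identification; without CCQ, only the closure $\psi^*=\cl\eta$ is available and both the sum-rule and attainment can fail.
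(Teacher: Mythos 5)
Your argument is correct, and for part (a) it takes a genuinely more self-contained route than the paper. The paper proves (a) in one line by setting $f_1:=\vphi$, $f_2:=\hat h$ with $\hat h(X,V):=h(V)$ and citing Theorem \ref{prop:ExtSumRule General}\,(b) together with the identification \eqref{eq:T3} between $\kS$ and the abstract argmin map $\cT$; your Fenchel--Young computation is precisely the proof of that appendix result, inlined and specialized (the chain $\psi^*\le\eta\le h^*(\barW-\tilde T)$ pinched against the global Fenchel--Young bound, and the ``sum of two nonnegative gaps equals zero'' step for the reverse inclusion). What you gain is that the reader sees exactly where properness of $\vphi$ and $h$ at $(\barX,\barV)$ enters and why $\psi^*=\eta$ holds at the particular point $(\barY,\barW)$ without invoking CCQ; what the paper gains is brevity and reuse of a result it needs elsewhere. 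For part (b) you follow the paper's route, except that the sum rule for $\p\psi$ and the domain identity $\dom\p\psi=\bigcap_i\dom\p f_i$ are Theorem \ref{prop:ExtSumRule General}\,(c), not (d) --- part (d) concerns $\p f^*$, which you only need if you route the last equality of \eqref{eq:Impl 1} through $\dom\p\psi^*$ as in Lemma \ref{lem:conj psi}\,(d); fix that citation. Finally, your caveat about the third set in \eqref{eq:Impl 1} is well taken and worth keeping: as literally written the right-hand side is $\dom\p\psi^*$, which contains $\p\psi(\barX,\barV)$ for \emph{every} subdifferentiable point and is in general strictly larger; the equality only holds with the tacit restriction (carried by the condition $z^i\in\p f_i(\bx)$ in Theorem \ref{prop:ExtSumRule General}\,(c), but dropped in the display) to those $(\barY,\barW)$ whose minimizing $T$ realizes the Fenchel equalities at the given $(\barX,\barV)$. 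Your reading is the intended one, and part (a) supplies exactly the two-way identification needed to justify it.
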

\begin{proof} 
Set $f_1(X,V):=\vphi(X, V)$ and $f_2(X,V):=h(V)$, so that  
the mapping $\cT$ in Theorem \ref{prop:ExtSumRule General} is given by
\eqref{eq:def T2}. Then, using \eqref{eq:T3}, part (a) follows from
Theorem \ref{prop:ExtSumRule General} (b), and part (b) follows from
Theorem \ref{prop:ExtSumRule General} (c).
\end{proof}

\subsection{Infimal projection I}
Let the infimal projection $p$ be as given in \eqref{eq:p}.
We are now in position to give a formula for $p^*$ under CCQ. 
%and to clarify its relationship to $\psi^*$.

\begin{theorem}[Conjugate of  $p$ and properties under CCQ] %[Infimal projection of $\psi$]
\label{th:inf-proj psi}
Let $p$ be given by \eqref{eq:p}.  Moreover, let $ \eta_0:\R^{n\times m}\to \rbar$ be given by 
 \begin{equation}\label{eq:eta0}
\eta_0:Y\mapsto \inf_{(Y,-W)\in\Omega(A,B)}h^*(W).
\end{equation}
%with its domain 
Then the following hold:
\begin{itemize}
\item[(a)] $
\dom \eta_0=
\set{Y\in\R^{n\times m}}{AY=B,\;\left(-\frac{1}{2}YY^T+\cK^\circ_A\right)\cap (-\dom h^*)\neq \emptyset}$\\ 
$=\set{Y\in\R^{n\times m}}{(Y,0)\in \dom \eta},$ where $\eta$ is defined in 
\eqref{eq:conj psi}.
\item[(b)]
%  The conjugate of $p$ satisfies 
If $\dom{h}\cap \cK_A \ne\emptyset$, then $p^*=\cl \eta_0$, hence $\dom \eta_0\subset\dom p^*.$
\item[(c)] If CCQ holds for $p$,  then $\dom p=\R^{n\times m}$ and the following hold:
\begin{itemize}
\item[(I)]  $p^*=\eta_0$, i.e. 
\begin{equation}\label{eq:p conj}
p^*(Y)=\inf_{(Y,-W)\in\Omega(A,B)}h^*(W).
\end{equation}
Moreover, for all  $Y\in\dom p^*$, the infimum is a minimum, i.e.  there exists 
$W\in\dom h^*$ with $(Y,-W)\in\Omega(A,B)$ such that $p^*(Y)=h^*(W)$. 

In particular, $p^*$ is closed, proper convex under CCQ if and only if it is  proper, which is the case if and only if

\[
\begin{aligned}
\emptyset\ne \dom \psi^*(\cdot,0)
&=\set{Y}{\exists\, W\in\dom h^*:\;(Y,-W)\in \Omega(A,B)}
\\ &=\set{Y}{(Y,0)\in \Omega(A,B)+(\{0\}\times \dom h^*)},
\end{aligned}
\]
with  $\dom p^*=\dom \psi^*(\cdot,0) =\dom \eta_0$.

\item[(II)]  $p$  is either (convex) finite-valued 
(hence $p\in \Gamma_0(\Rnm)$) or  $p\equiv-\infty$. The former is the case if and only if   $\dom \psi^*(\cdot,0)\neq\emptyset$.
\end{itemize}

%\item[d)] If \underline{PCQ} holds then $p=p^{**}$, i.e. $p\in \Gamma_0(\R^{n\times m})$ and  $\argmin_{V}\psi(\bar X,V)\neq \emptyset$ for all $\bar X\in \dom p$.

\end{itemize}

\end{theorem}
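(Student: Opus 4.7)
The engine driving all three parts is the partial-conjugate identity
\[
p^*(Y) = \sup_{X,V}\{\langle Y,X\rangle - \psi(X,V)\} = \psi^*(Y,0),
\]
obtained by interchanging the two suprema in the definition of $p^*$. Once this is in hand, Lemma~\ref{lem:conj psi} supplies the description of $\psi^*$ needed to finish each part; only the identity $\dom p = \R^{n\times m}$ under CCQ requires genuinely new work.

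For part (a), I plan to unpack $(Y,0)\in\dom\eta$ through the representation \eqref{eq:dom eta}: writing $(Y,0)=(Y,T)+(0,-T)$ with $(Y,T)\in\Omega(A,B)$ and $-T\in\dom h^*$, substituting $W:=-T$, and inserting the description of $\Omega(A,B)$ from Theorem~\ref{th:clco D} to obtain the stated equivalent form.

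For part (b), I combine $p^* = \psi^*(\cdot,0)$ with Lemma~\ref{lem:conj psi}(b), which under $\dom h\cap\cK_A\ne\emptyset$ gives $\psi^* = \cl\eta$. Hence $p^*=(\cl\eta)(\cdot,0)$. Since $(\cl\eta)(\cdot,0)\le \eta_0$ and $p^*$ is lsc, the bound $p^*\le\cl\eta_0$ is immediate. For the reverse direction my plan is to pass to epigraphs and use that $\epi\psi^* = \cl(\epi\eta)$, then slice by the closed affine subspace $\R^{n\times m}\times\{0\}\times\R$; because $\dom h\cap\cK_A\ne\emptyset$ furnishes a point of $\ri(\dom\eta)$ inside this slice, a standard relative-interior argument lets closure and slicing commute, yielding $\epi p^* = \cl(\epi\eta_0) = \epi\cl\eta_0$. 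The inclusion $\dom\eta_0\subset\dom p^*$ is then immediate from $p^*\le\eta_0$.

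For part (c) under CCQ, Lemma~\ref{lem:conj psi}(c) strengthens $\psi^* = \cl\eta$ to $\psi^* = \eta$, so $p^*=\eta(\cdot,0)=\eta_0$. Attainment of the infimum in \eqref{eq:eta0} for every $Y\in\dom p^*$ follows from nonemptiness of $\kS(Y,0)$ supplied by Lemma~\ref{lem:conj psi}(c), after the substitution $W=-T$. The properness characterization in (I) is the standard equivalence: a convex function is proper iff its conjugate is proper iff $\dom p^*\ne\emptyset$. For (c)(II), with $\dom p = \R^{n\times m}$ having empty relative boundary, the classical fact that a convex function attaining $-\infty$ at a point of $\ri(\dom)$ is identically $-\infty$ on $\ri(\dom)$ yields the dichotomy $p\equiv-\infty$ vs.\ $p$ finite on $\R^{n\times m}$; finiteness on the whole finite-dimensional space forces continuity, hence $p\in\Gamma_0(\R^{n\times m})$.

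The domain identity $\dom p = \R^{n\times m}$ is the main obstacle, since Lemma~\ref{lem:p}(d) assumed the extra hypothesis $\rank A=\ell$. My plan is to pick $V\in\dom h\cap\inter\cK_A$ via \eqref{eq:simple ccq} and, for arbitrary $X\in\R^{n\times m}$, verify $\rge\binom{X}{B}\subset\rge M(V)$ by solving the linear system $Vu + A^T v = x,\ Au = b$ column-by-column. Picking $u_0$ with $Au_0=b$ and searching for $u=u_0+\Delta u$ with $\Delta u\in\ker A$ reduces the first equation to the projected condition $PVP\,\Delta u = P(x-Vu_0)$, where $P$ is the orthogonal projection onto $\ker A$. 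By Proposition~\ref{prop:K_A}(c), $V\in\inter\cK_A$ makes $PVP|_{\ker A}$ positive definite, hence invertible there, delivering the required $\Delta u$; a choice of $v$ completing $A^T v = x-Vu$ then finishes the argument.
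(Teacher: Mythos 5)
Your overall route is the paper's: everything is driven by the identity $p^*=\psi^*(\cdot,0)$ together with the description of $\psi^*$ in Lemma~\ref{lem:conj psi}, and parts (a), (c.I) and (c.II) go through essentially as you describe. Your hands-on verification that $\dom p=\Rnm$ under CCQ (solving $Vu+A^Tv=x$, $Au=b$ via the projected equation $PVP\,\Delta u=P(x-Vu_0)$, with $PVP$ positive definite on $\ker A$ by Proposition~\ref{prop:K_A}(c)) is a correct, self-contained substitute for the paper's argument, which instead computes $\ri(\dom p)$ as the linear image of $\ri\bigl(\dom\varphi\cap(\Rnm\times\dom h)\bigr)=\Rnm\times(\inter\cK_A\cap\ri(\dom h))$; in effect you re-prove the identity $\inter(\dom\varphi)=\Rnm\times\inter\cK_A$ that the paper imports from \cite[Theorem 4.1]{BuH15}.

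The gap is in part (b). The inequality $p^*\le\cl\eta_0$ is fine, but your justification of the reverse inequality rests on the claim that $\dom h\cap\cK_A\ne\emptyset$ furnishes a point of $\ri(\dom\eta)$ in the slice $\Rnm\times\{0\}$, and that claim is false. The hypothesis $\dom h\cap\cK_A\ne\emptyset$ lives entirely on the primal side: it guarantees that $\psi$, hence $\psi^*=\cl\eta$, is proper, but it gives no control over where $\dom\eta=\Omega(A,B)+(\{0\}\times\dom h^*)$ sits relative to the hyperplane $W=0$. The paper's own Example~\ref{ex:p not proper} is a counterexample: there $\dom h\cap\cK_A=\R_+\ne\emptyset$, while $\dom h^*=\{-1\}$ and $\dom\eta=\set{(y,w)}{w\le -1-\tfrac12 y^2}$ is disjoint from the slice $w=0$, so the relative-interior qualification you invoke has nothing to bite on. (In that example the conclusion of (b) still holds because both sides are identically $+\infty$, but your argument does not establish it.) You have correctly isolated the delicate point --- the paper itself passes from $p^*=\psi^*(\cdot,0)=(\cl\eta)(\cdot,0)$ to $\cl\bigl(\eta(\cdot,0)\bigr)$ with no comment beyond the citation of \cite[Theorem 11.23 (c)]{RoW98} --- but the interchange of closure and slicing needs a justification that the stated hypothesis does not obviously supply; as written, this step of your proof does not go through.
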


\begin{proof}
%\begin{itemize} 
(a)   This follows from the definition of $\eta_0$. Also note that
$\eta_0=\eta(\cdot,0)$.

\smallskip

\noindent
(b) By Lemma \ref{lem:conj psi} (b), $\psi^*\in\Gamma_0(\bE)$ with $\psi^*=\cl\eta$
with $\eta$ defined in \eqref{eq:conj psi}.
Hence, by  \cite[Theorem 11.23 (c)]{RoW98}, $p^*=\psi^*(\cdot,0)$ which 
establishes the given representation. 
%The expression for $p^*$ (without CCQ) follows from  \cite[Theorem 11.23 c)]{RoW98} and  Lemma \ref{lem:conj psi} b).   
The domain containment is clear as $p^*=\cl \eta_0\leq \eta_0$.
\smallskip

\noindent
(c) Observe that 
\(
\dom p = L(\dom \vphi \cap \R^{n\times m}\times \dom h),
\)
where $L:(X,V)\mapsto X$, see Lemma \ref{lem:p}. 
By CCQ, we have $\ri (\dom h)\cap \inter \cK_A\neq \emptyset$, hence\begin{eqnarray*}
\ri(\dom \vphi \cap (\R^{n\times m}\times \dom h)) & = & \inter (\dom \vphi)\cap (\R^{n\times m}\times \ri (\dom h))\\
& = & (\R^{n\times m}\times \inter \cK_A)\cap (\R^{n\times m}\times \ri (\dom h))\\
& = & \R^{n\times m}\times (\inter \cK_A\cap \ri (\dom h)),
\end{eqnarray*}
where we use \cite[Theorem 4.1]{BuH15}  to represent 
$\inter (\dom \vphi)$. This now  gives
\[
\ri (\dom p)= L \left[\ri (\dom \vphi \cap \R^{n\times m}\times \dom h)\right]=\R^{n\times m}.
\]
\noindent
(c.I)  As in part (b),  $p^*=\psi^*(\cdot,0)$. Hence,  Lemma \ref{lem:conj psi} (c)  gives the identity $p^*=\eta_0$ under CCQ as well as the attainment statement. 
Since $\psi^*$ is closed, proper, convex (under CCQ) by 
Lemma \ref{lem:conj psi} (b), $\psi^*(\cdot,0)$ is, too,  if and only if $\dom \psi^*(\cdot,0)\ne \emptyset$, and so the statements about 
$p^*=\psi^*(\cdot,0)$ follow .
\\
\vskip .1in
\noindent
(c.II)  We have $\dom p=\R^{n\times m}$. By \cite[Corollary 7.2.3]{RTR70} this implies that either
$p\equiv-\infty$ or $p$ is finite-valued, which shows the first statement.  For the second,  again as $\dom p=\R^{n\times m}$, observe that the convex function $p$ is finite-valued if and only if it is proper, which is true if and only if $p^*$ is proper, so I) gives the desired statement.
\end{proof}

\noindent
Observe that Example \ref{ex:p not proper} shows that the condition 
$\emptyset\ne\dom\psi^*(\cdot,0)$ is essential
in Theorem \ref{th:inf-proj psi} (c.I-c.II). Indeed, in this example,
$p\equiv -\infty$ so $\dom p=\R$, while $h=\sig_{\{-1\}}$ and $h^*=\delta_{\{-1\}}$,
$\psi^*(\cdot,0)=p^*\equiv\infty$, and CCQ is satisfied.

We now broaden our perspective of infimal projection by embedding it into a {\em pertubation duality  framework} in the sense of  \cite[Theorem 11.39]{RoW98} or the development in \cite[Chapter 5]{AuT03}. 
Given $\bar X\in\R^{n\times m}$, define $f_{\bar X}$ by
\[
f_{\bar X}(X,V):= \psi(X+\bar X,V)\quad ((X,V)\in \bE),
\]
and $p_{\bar X}$ by 
\begin{equation}
p_{\bar X}(X):=\inf_{V\in \bS^n} f_{\bar X}(X,V)\quad (X\in \R^{n\times m}).
\end{equation}
Then 
\(
f_{\bar X}^*(Y,W)=\psi^*(Y,W)-\ip{\bar X}{Y}\quad ((Y,W)\in \bE),
\)
\cite[Equation 11(3)]{RoW98}.  Define
\begin{equation}\label{eq:qx}
q_{\bar X}(W):=-\sup_{Y}\{\ip{\bar X}{Y}-\psi^*(Y,W)\}  \quad (W\in\bS^n).
\end{equation}
Then $q_{\bar X}$ is a convex  function that pairs in duality with 
$p_{\bar X}$  satisfying the weak duality
\(
p_{\bar X}(0)\geq -q_{\bar X}(0)\quad (\bar X\in \R^{n\times m}).
\)
Applying the general pertubation duality to our scenario yields the following result.

%To this end recall that, for $p$ given by \eqref{eq:p}, we have   $p^*=\psi^*(\cdot,0)$. Therefore, we get
%\[
%p^{**}(\bar X)=\sup_{Y}\{\ip{\bar X}{Y}-\psi^*(Y,0)\}\quad (\bar X\in \R^{n\times m}).
%\]
%Hence, given $\bar X\in \R^{n\times m}$, if we define 
%\begin{equation}\label{eq:qx}
%q_{\bar X}(W):=-\sup_{Y}\{\ip{\bar X}{Y}-\psi^*(Y,W)\}
%\end{equation}
%then $q_{\bar X}$ is a proper (see Lemma \ref{lem:PCQ and duality} for its domain), convex function and  we have a natural duality pairing of $p$ and $q_{\bar X}$ with weak duality reading 
%\[
%p(\bar X)\geq -q_{\bar X}(0)\quad (\bar X\in \R^{n\times m}).
%\]
%This makes the whole machinery of convex pertubation duality theory in the sense of \cite[Theorem 11.39]{RoW98}, \cite[Theorem 3]{ABDFM 17}  or \cite[Chapter 5]{AuT03} applicable,  where $\bar X$ plays the role of $0$ on the primal  level. The next result summarizes  the results from the former references  applied to our scenario. 

\begin{proposition}[Shifted duality for $p$]\label{prop:Shifted duality}  Let $p$ be defined by \eqref{eq:p}, let $\bar X\in \dom p$  and $q_{\bar X}$ be defined by \eqref{eq:qx}. Then the following hold:
\begin{itemize}
\item[(a)] If $0\in \ri(\dom q_{\bar X})$ then $p(\bar X)=-q_{\bar X}(0)\in \R$, 
$\argmin \psi(\bar X,\cdot)\neq\emptyset$, and $\p q_{\bar X}(0)\neq\emptyset$. 

\item[(b)] If $\bar X\in \ri(\dom p)$ then $p(\bar X)=-q_{\bar X}(0)\in \R$, $\argmax_{Y}\{ \ip{\bar X}{Y}-\psi^*(Y,W) \}\neq \emptyset$, and  $\p p(\bar X)\neq \emptyset $.

\item[(c)] Under either condition $0\in \ri(\dom q_{\bar X})$ or $\bar X\in \ri(\dom p)$, $p$ is lsc at $\bar X$and $-q_{\bar X}$ is lsc at $0$.

\item[(d)] We have 
\[
\left.\begin{array}{rcl}
& & p(\bar X)\\& = & \!\!\psi(\bar X,\bar V),\\
& = &\!\!\ip{\bar X}{\bar Y}- \psi^*(\bar Y,0),\\
& =  &\!\!-q_{\bar X}(0) 
\end{array}\right\}\!\!\iff\!\!  (\bar Y,0)\!\in\! \p\psi(\bar X,\bar V) \!\!\iff\!\! (\bar X,\bar V) \!\in\! \p\psi^*(\bar Y,0).
\]
\end{itemize}
\end{proposition}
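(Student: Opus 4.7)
The plan is to recognize the proposition as a direct application of the classical perturbation-duality theorem \cite[Theorem 11.39]{RoW98} to the shifted function $f_{\bar X}(X,V):=\psi(X+\bar X,V)$. Under the standing hypothesis $\dom h\cap\cK_A\ne\emptyset$ (automatic since $\bar X\in\dom p$), Lemma~\ref{lem:conj psi}(b) gives $\psi\in\Gamma_0(\bE)$, hence $f_{\bar X}\in\Gamma_0(\bE)$ with $f_{\bar X}^*(Y,W)=\psi^*(Y,W)-\ip{\bar X}{Y}$. The primal value function is $p_{\bar X}(X)=\inf_V f_{\bar X}(X,V)$ with $p_{\bar X}(0)=p(\bar X)$, and the dual value function from \eqref{eq:qx} is $q_{\bar X}(W)=\inf_Y f_{\bar X}^*(Y,W)$. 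A standard swap-of-suprema argument yields the partial-conjugate formulas
\[
p_{\bar X}^*(Y)=f_{\bar X}^*(Y,0)=\psi^*(Y,0)-\ip{\bar X}{Y}\AND q_{\bar X}^*(V)=f_{\bar X}^{**}(0,V)=\psi(\bar X,V),
\]
and evaluation at $0$ gives the two biconjugate identities $p_{\bar X}^{**}(0)=-q_{\bar X}(0)$ and $q_{\bar X}^{**}(0)=-p(\bar X)$, from which weak duality $p(\bar X)\geq -q_{\bar X}(0)$ is immediate.

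For (a), the hypothesis $0\in\ri(\dom q_{\bar X})$ forces $q_{\bar X}$ to be proper (otherwise $q_{\bar X}\equiv-\infty$ on $\ri(\dom q_{\bar X})$, yielding $p(\bar X)=+\infty$ and contradicting $\bar X\in\dom p$), subdifferentiable at $0$, and equal to its lsc hull there. Hence $-p(\bar X)=q_{\bar X}^{**}(0)=q_{\bar X}(0)$, which gives $p(\bar X)=-q_{\bar X}(0)\in\R$ and $\p q_{\bar X}(0)\ne\emptyset$; the Fenchel--Young equality reads $V\in\p q_{\bar X}(0)\iff q_{\bar X}^*(V)+q_{\bar X}(0)=0\iff\psi(\bar X,V)=p(\bar X)$, so $\p q_{\bar X}(0)=\argmin\psi(\bar X,\cdot)\ne\emptyset$. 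Part~(b) is entirely symmetric: $\bar X\in\ri(\dom p)$ is equivalent to $0\in\ri(\dom p_{\bar X})$, yielding $p(\bar X)=p_{\bar X}^{**}(0)=-q_{\bar X}(0)\in\R$ and $\p p(\bar X)=\p p_{\bar X}(0)\ne\emptyset$, with $\p p(\bar X)=\argmax_Y\{\ip{\bar X}{Y}-\psi^*(Y,0)\}$ via Fenchel--Young. Part~(c) then follows because under either hypothesis the equalities $p_{\bar X}(0)=p_{\bar X}^{**}(0)$ and $q_{\bar X}(0)=q_{\bar X}^{**}(0)$ express precisely the lsc statements for $p$ at $\bar X$ and for $-q_{\bar X}$ at $0$.

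For (d), the chain $p(\bar X)=\psi(\bar X,\bar V)=\ip{\bar X}{\bar Y}-\psi^*(\bar Y,0)=-q_{\bar X}(0)$ encodes simultaneously primal attainment by $\bar V$, dual attainment by $\bar Y$, and zero duality gap. Combining the first two equalities yields the Fenchel--Young equality
\[
\psi(\bar X,\bar V)+\psi^*(\bar Y,0)=\ip{\bar X}{\bar Y}=\ip{(\bar X,\bar V)}{(\bar Y,0)},
\]
which is equivalent to $(\bar Y,0)\in\p\psi(\bar X,\bar V)$; since $\psi=\psi^{**}$, this is further equivalent to $(\bar X,\bar V)\in\p\psi^*(\bar Y,0)$. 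Conversely, either subdifferential relation combined with weak duality forces all three equalities. The whole argument is essentially mechanical; the only care needed lies in the partial-conjugate computations of the first paragraph (which rest on $f_{\bar X}\in\Gamma_0(\bE)$ so that $f_{\bar X}^{**}=f_{\bar X}$) and in matching the ``$-q_{\bar X}$ lsc'' statement to the convex identity $q_{\bar X}(0)=q_{\bar X}^{**}(0)$, and I do not anticipate a deeper obstacle.
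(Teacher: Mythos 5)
Your proposal is correct and follows the same underlying route as the paper: both recognize the statement as shifted perturbation duality for the pair $(p_{\bar X},q_{\bar X})$ built from $f_{\bar X}(X,V)=\psi(X+\bar X,V)$. The difference is purely in how the general duality facts are handled. The paper's proof is two lines: it checks that $f_{\bar X}\in\Gamma_0(\bE)$ (because $\psi$ is proper once $\dom p\ne\emptyset$) and then invokes \cite[Theorems 5.1.2--5.1.5, Corollary 5.1.2]{AuT03} wholesale, translating from $p_{\bar X}$ at $0$ to $p$ at $\bar X$. You instead re-derive those imported results from scratch via the partial-conjugate identities $p_{\bar X}^*=f_{\bar X}^*(\cdot,0)$ and $q_{\bar X}^*=f_{\bar X}^{**}(0,\cdot)$, biconjugation on the relative interior of the domain, and the Fenchel--Young equality; your identifications $\p q_{\bar X}(0)=\argmin\psi(\bar X,\cdot)$ and $\p p(\bar X)=\argmax_Y\{\ip{\bar X}{Y}-\psi^*(Y,0)\}$, and the squeeze $p(\bar X)\le\psi(\bar X,\bar V)=\ip{\bar X}{\bar Y}-\psi^*(\bar Y,0)\le -q_{\bar X}(0)\le p(\bar X)$ in part (d), are exactly the content of the cited theorems. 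What your version buys is self-containment and transparency about where each hypothesis enters; what the paper's version buys is brevity and a uniform reference for the table of constraint qualifications that follows. One small point you share with the paper rather than improve on: in part (b) the conclusion $p(\bar X)=-q_{\bar X}(0)\in\R$ tacitly assumes $p_{\bar X}$ is proper (the paper simply asserts $p(\bar X)=p_{\bar X}(0)\in\R$ from $\bar X\in\dom p$, and your step $p_{\bar X}(0)=p_{\bar X}^{**}(0)\in\R$ needs the same); in light of Example \ref{ex:p not proper}, where $p\equiv-\infty$ and $\dom p=\R$, it would be worth one sentence noting that finiteness of $p(\bar X)$ is being assumed, or deriving it as you do in part (a) from finiteness of the biconjugate. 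This is a shared blemish, not a gap specific to your argument.
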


\begin{proof}  Let $\bar X\in \dom p$ and observe that 
 \(
 p(X+\bar X)=p_{\bar X}(X)\quad (X\in \bR^{n\times m}),
 \)
 hence,  in particular, $p(\bar X)=p_{\bar X}(0)\in \R$. Moreover, notice that $\psi$ and hence $f_{\bar X}$ is proper (hence in $\Gamma_0$) as by assumption $\bar X\in \dom p$ exists. Applying the results
 \cite[Theorem 5.1.2--5.1.5, Corollary 5.1.2]{AuT03} to the duality 
 pair $p_{\bar X}$ and $q_{\bar X}$ and translating from $p_{\bar X}$ 
 at $0$ to $p$ at $\bar X$ gives all the desired statements. 
\end{proof}

\noindent
The domain of $q_{\bar X}$ plays a key role in interpreting this result
in a given setting. Below we provide a useful representation of this domain 
using the set
\begin{equation}\label{eq:C}
%\cC
%\cW(A,B)
\Omega_2(A,B):=\set{W\in\bS^n}{\exists Y: \; (Y,W)\in \Omega(A,B)}.
\end{equation}

\begin{lemma}[Domain of $q_{\bar X}$]\label{lem:PCQ and duality} Let $\bar X\in \R^{n\times m}$ and $q_{\bar X}$ defined by \eqref{eq:qx}. Then
\(
\dom q_{\bar X}=\Omega_2(A,B)+\dom h^*.
\)
\end{lemma}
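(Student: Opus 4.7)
The plan is to reduce the domain identity to a statement about the projection of $\dom\psi^*$ onto the $\bS^n$-component, and then exploit the explicit form of $\psi^*$ provided by Lemma~\ref{lem:conj psi}.

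Under the nontrivial case $\dom h\cap\cK_A\ne\emptyset$, Lemma~\ref{lem:conj psi}(a)--(b) gives $\psi\in\Gamma_0(\bE)$, so $\psi^*$ is proper and $\psi^*(Y,W)>-\infty$ for all $(Y,W)$. Consequently, $W\in\dom q_{\bar X}$ iff the supremum in~\eqref{eq:qx} exceeds $-\infty$, iff there exists $Y$ with $\psi^*(Y,W)<+\infty$. This identifies
\[
\dom q_{\bar X}=\set{W\in\bS^n}{\exists\,Y\in\R^{n\times m}:\;(Y,W)\in\dom\psi^*},
\]
which in particular is independent of $\bar X$. The degenerate case $\dom h\cap\cK_A=\emptyset$ forces $\psi\equiv+\infty$ and hence $q_{\bar X}\equiv-\infty$, so $\dom q_{\bar X}=\bS^n$; a direct check shows $\Omega_2(A,B)+\dom h^*=\bS^n$ in that case as well.

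By Lemma~\ref{lem:conj psi}(b), $\psi^*=\cl\eta$ for $\eta$ as in~\eqref{eq:conj psi}, and~\eqref{eq:dom eta} reveals that the projection of $\dom\eta$ onto $\bS^n$ is precisely $\Omega_2(A,B)+\dom h^*$. Since $\dom\eta\subseteq\dom\cl\eta=\dom\psi^*$, the inclusion $\Omega_2(A,B)+\dom h^*\subseteq\dom q_{\bar X}$ is immediate: for any decomposition $W=W_1+W_2$ with $(Y_0,W_1)\in\Omega(A,B)$ and $W_2\in\dom h^*$, substituting $T=W_1$ in~\eqref{eq:conj psi} yields $\psi^*(Y_0,W)\le\eta(Y_0,W)\le h^*(W_2)<+\infty$, placing $W$ in $\dom q_{\bar X}$.

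For the reverse inclusion the plan is to compute $q_{\bar X}^*$ directly by combining the two suprema in~\eqref{eq:qx}:
\[
q_{\bar X}^*(V)=\sup_{Y,W}\{\langle\bar X,Y\rangle+\langle V,W\rangle-\psi^*(Y,W)\}=\psi^{**}(\bar X,V)=\psi(\bar X,V),
\]
so that $\cl q_{\bar X}=q_{\bar X}^{**}=(\psi(\bar X,\cdot))^*$. Applying Fenchel conjugate duality to the decomposition $\psi(\bar X,\cdot)=\vphi(\bar X,\cdot)+h$, and identifying $\dom(\vphi(\bar X,\cdot))^*=\Omega_2(A,B)$ (which follows because $\vphi(\bar X,\cdot)$ is the slice at $\bar X$ of the support function $\sig_{\Omega(A,B)}$), then yields $\dom\cl q_{\bar X}\subseteq\Omega_2(A,B)+\dom h^*$. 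Combined with $\dom q_{\bar X}\subseteq\dom\cl q_{\bar X}$, this completes the reverse inclusion. The main obstacle is the final sum-rule step: in general one only has $\dom(f+g)^*\supseteq\dom f^*+\dom g^*$, with possible enlargement via closure. The block structure of $\hat h^*(Y,W)=\del_{\{0\}}(Y)+h^*(W)$ decouples the $Y$- and $W$-roles in the inf-convolution $\del_{\Omega(A,B)}\infconv\hat h^*$, which should prevent any enlargement on the $\bS^n$ side; making this projection-stability rigorous is the most delicate part of the argument.
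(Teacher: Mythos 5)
Your framing is right --- $\dom q_{\bar X}$ is the projection of $\dom\psi^*$ onto $\bS^n$, and your forward inclusion $\Omega_2(A,B)+\dom h^*\subseteq\dom q_{\bar X}$ via $\dom\eta\subseteq\dom\psi^*$ is correct. But the reverse inclusion, which is the entire content of the lemma beyond the easy direction, is not proved: you end by saying the block structure of $\hat h^*$ ``should prevent any enlargement'' under the closure in $\psi^*=\cl\eta$, and that making this rigorous is ``the most delicate part.'' That delicate part is precisely what is missing, and it is not automatic. In general the projection of $\dom\cl\eta$ can be strictly larger than the projection of $\dom\eta$: take $\eta=\delta_C$ with $C=\set{(y,w)}{0<w<1,\ y\ge 1/w}$, whose $w$-projection is $(0,1)$ while the $w$-projection of $\dom\cl\eta=\cl C$ is $(0,1]$. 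Your alternate route through $\cl q_{\bar X}=(\psi(\bar X,\cdot))^*$ hits the same wall: the conjugate sum rule only yields $\dom(\vphi(\bar X,\cdot)+h)^*\subseteq\cl\left(\Omega_2(A,B)+\dom h^*\right)$, a closure you cannot strip off by that argument. So your proposal, fully executed, sandwiches $\dom q_{\bar X}$ between $\Omega_2(A,B)+\dom h^*$ and its closure but does not deliver equality. The paper avoids all of this by substituting the \emph{exact} formula $\psi^*=\eta$ from Lemma~\ref{lem:conj psi} into \eqref{eq:qx}, which gives $q_{\bar X}(W)=\inf_{(Y,T)\in\Omega(A,B)}\{h^*(W-T)-\ip{\bar X}{Y}\}$ and lets the domain be read off in one line; if you want the stated equality you must likewise pass through the exactness of the inf-convolution (this is where CCQ does the work) rather than trying to control the closure after the fact.

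A secondary error: in the degenerate case $\dom h\cap\cK_A=\emptyset$ your claim that ``a direct check shows $\Omega_2(A,B)+\dom h^*=\bS^n$'' is false. Take $A=0$, $B=0$, $n=2$ and $h=\delta_{\cW}$ with $\cW:=\set{V\in\bS^2}{V_{11}\le -1}$. Then $\dom h\cap\bS^2_+=\emptyset$, so $q_{\bar X}\equiv-\infty$ and $\dom q_{\bar X}=\bS^2$; but $\Omega_2(0,0)=\bS^2_-$ and $\dom h^*=\set{\diag(z,0)}{z\ge 0}$, so $\Omega_2(0,0)+\dom h^*$ does not contain $\diag(0,1)$. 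Hence the identity genuinely requires the nondegeneracy you assume in the main case, and the claimed ``direct check'' cannot be supplied.
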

\begin{proof}  Using Lemma \ref{lem:conj psi}, observe that
\begin{eqnarray*}
q_{\bar X}(W)&=& \inf_{Y} \left\{\psi^*(Y,W)-  \ip{\bar X}{Y} \right\}\\
& = & \inf_{Y} \left\{\eta(Y,W)-  \ip{\bar X}{Y} \right\}\\
& = & \inf_{(Y,T)\in \Omega(A,B)} \left\{ h^*(W-T)-  \ip{\bar X}{Y} \right\}.
\end{eqnarray*}
Therefore,
\[
\dom q_{\bar X}= \set{W\in \bS^n}{\exists (Y,T)\in \Omega(A,B): W-T\in \dom h^* }=  \Omega_2(A,B)+\dom h^*.
\]
%\begin{eqnarray*}
%\[
%\begin{aligned}
%\dom q_{\bar X} &=  \set{W}{\exists (Y,T)\in \Omega(A,B): W-T\in \dom h^* }
%\\
%&=\set{W\in\Sn}{\exists Y\in\Rnm\ :\ (Y,W)\in \dom\psi^*}
%\\
%&=  \Omega_2(A,B)+\dom h^*.
%\end{aligned}
%\]
%\end{eqnarray*}
\end{proof}

\noindent
We now discuss various constraint qualifications for $p$.

\subsection{Constraint qualifications}\label{sec:CQ} 
% As suggested  above, we now study  constraint qualifications that occur naturally in the analysis of the optimization problem  in \eqref{eq:p}. These will play a prominent role throughout.  

We start our analysis with a result about the set $\Omega_2(A,B)$ from \eqref{eq:C}, which was used  in Lemma \ref{lem:PCQ and duality}
to represent the domain of $q_{\bar X}$.

\begin{lemma}[Properties of $\Omega_2(A,B)$]\label{lem:C}  Let $\Omega_2(A,B)$ be as in \eqref{eq:C}.  Then we have:
\begin{itemize}
\item[(a)] $\Omega_2(A,B)$ is closed and convex with $\Omega_2(A,B)^\infty=\cK_A^\circ$.
\item[(b)] $\Omega_2(A,B) =\dom \vphi(\bar X,\cdot)^* $ for all 
$\bar X\in \Rnm$ such that $\vphi(\bar X,\cdot)$ is proper. 
\item[(c)] We have 
%\begin{eqnarray*}

\centerline{$
\ri \Omega_2(A,B) \!\! = \!\!  \set{W\!}{\!\exists Y:\; AY\!=\!B,\; 
\frac{1}{2}YY^T\!+\!W\in \ri (\cK_A^\circ)\!}
%\\
 \!\! = \!  \ri(\dom \vphi(\bar X,\cdot)^*)
$}
%\end{eqnarray*} 
for all $\bar X$ such that $\vphi(\bar X,\cdot)$ is proper. 

\end{itemize}

\end{lemma}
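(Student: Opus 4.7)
The proof naturally splits along the three parts. For part (a), convexity of $\Omega_2(A,B)$ is immediate since it is the linear image of the closed convex set $\Omega(A,B)$ (Theorem \ref{th:clco D}) under $(Y,W)\mapsto W$. Closedness is the delicate point: given $W_k\to W$ with witnesses $Y_k$ satisfying $AY_k=B$ and $W_k+\half Y_kY_k^T\in\cK_A^\circ$, I would show $\{Y_k\}$ is bounded by contradiction---if $\|Y_k\|\to\infty$, rescaling the cone membership by $\|Y_k\|^2$ and using closedness of $\cK_A^\circ\subseteq\bS^n_-$ (Proposition \ref{prop:K_A}(b)) forces the unit limit direction $\tilde Y$ to satisfy $\tilde Y\tilde Y^T\preceq 0$, hence $\tilde Y=0$; boundedness then supplies a convergent subsequence witnessing $W\in\Omega_2(A,B)$. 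For the horizon cone, the inclusion $\supseteq$ follows from \eqref{eq:horizon recession}: if $W\in\cK_A^\circ$ and $T_0:=W_0+\half Y_0Y_0^T\in\cK_A^\circ$ witnesses $W_0\in\Omega_2(A,B)$, then $W_0+tW+\half Y_0Y_0^T=T_0+tW\in\cK_A^\circ$ for $t\geq 0$. Conversely, for $W\in\Omega_2(A,B)^\infty$ with witnesses $Y_t$ for $W_0+tW$, the estimate $\half Y_tY_t^T\preceq -(W_0+tW)$ combined with taking traces bounds $\|Y_t\|^2/t$, so along a subsequence $Y_t/\sqrt{t}\to\tilde Y$ with $A\tilde Y=0$; dividing the cone membership by $t$ and passing to the limit yields $W+\half\tilde Y\tilde Y^T\in\cK_A^\circ$, from which Proposition \ref{prop:K_A}(b) combined with $\rge\tilde Y\subseteq\ker A$ forces $W\in\cK_A^\circ$.

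For part (b), fix $\bar X$ with $F:=\vphi(\bar X,\cdot)$ proper. For $W\in\Omega_2(A,B)$ with witness $Y_0$, the representation $\vphi=\sigma_{\Omega(A,B)}$ gives $F(V)\geq\ip{\bar X}{Y_0}+\ip{V}{W}$ for all $V$, so $F^*(W)\leq-\ip{\bar X}{Y_0}<+\infty$; hence $\Omega_2(A,B)\subseteq\dom F^*$. For the reverse I invoke the classical identity $F^\infty=\sigma_{\dom F^*}$ for $F\in\Gamma_0$ (see \cite[\S13]{RTR70}) and compute $F^\infty$ via positive homogeneity: the identity $F(V_0+tV)/t=\vphi(\bar X/t,V_0/t+V)$ combined with lower semicontinuity of $\vphi$ gives $\liminf_{t\to\infty}\vphi(\bar X/t,V_0/t+V)\geq\vphi(0,V)$, while the support-function bound $\vphi(\bar X/t,V_0/t+V)\leq\sigma_{\Omega_2(A,B)}(V)+\vphi(\bar X,V_0)/t$ (obtained by splitting the sup over $\Omega(A,B)$) gives $\limsup\leq\sigma_{\Omega_2(A,B)}(V)$. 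Since $\vphi(0,V)=\sigma_{\Omega_2(A,B)}(V)$, this yields $F^\infty=\sigma_{\Omega_2(A,B)}$, whence $\cl\dom F^*=\Omega_2(A,B)$. Combined with the first inclusion, we obtain $\dom F^*=\Omega_2(A,B)$.

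For part (c), I would use $\ri(L(C))=L(\ri C)$ for linear images of convex sets (\cite[Theorem 6.6]{RTR70}), reducing the task to showing $\ri\Omega(A,B)=\{(Y,W):AY=B,\ W+\half YY^T\in\ri\cK_A^\circ\}$. For $\supseteq$, let $T_0:=W_0+\half Y_0Y_0^T\in\ri\cK_A^\circ$ and $(Y,W)\in\Omega(A,B)$ with $T:=W+\half YY^T\in\cK_A^\circ$; for $\mu>1$ and $(Y_\mu,W_\mu):=\mu(Y_0,W_0)+(1-\mu)(Y,W)$, a direct expansion yields
\[
W_\mu+\half Y_\mu Y_\mu^T=T_0+(\mu-1)\bigl[T_0-T+\tfrac{\mu}{2}(Y_0-Y)(Y_0-Y)^T\bigr].
\]
The bracketed perturbation lies in $\lin\cK_A^\circ=\{M\in\bS^n:M=PMP\}$ (Proposition \ref{prop:K_A}) because $T_0,T\in\cK_A^\circ$ and $A(Y_0-Y)=0$ forces $\rge(Y_0-Y)\subseteq\ker A$; hence for $\mu-1>0$ small the right-hand side remains in $\cK_A^\circ$ by the definition of $\ri\cK_A^\circ$, and the line-segment principle \cite[Theorem 6.1]{RTR70} gives $(Y_0,W_0)\in\ri\Omega(A,B)$. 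For the reverse inclusion, argue by contradiction: if $T_0\notin\ri\cK_A^\circ$ then some $0\neq u\in\ker A$ satisfies $u^T T_0u=0$; the direction $(0,-uu^T)$ is feasible from $(Y_0,W_0)$ (since $T_0-\eps uu^T\in\cK_A^\circ$ for all $\eps\geq 0$), so $(0,uu^T)\in\aff\Omega(A,B)-(Y_0,W_0)$, yet $T_0+\eps uu^T\notin\cK_A^\circ$ for any $\eps>0$ because $u^T(T_0+\eps uu^T)u=\eps\|u\|^4>0$---contradicting $(Y_0,W_0)\in\ri\Omega(A,B)$. The final identity $\ri\Omega_2(A,B)=\ri(\dom\vphi(\bar X,\cdot)^*)$ follows from part (b) by taking $\ri$ on both sides.

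The main technical hurdle is the $T_\mu$ expansion in part (c) and recognizing that its second-order correction $(Y_0-Y)(Y_0-Y)^T$ lies in $\lin\cK_A^\circ$ thanks to $A(Y_0-Y)=0$; the boundedness argument underlying closedness in (a) is a close second. The remaining steps reduce to cone manipulations, support-function calculus, and standard convex-analytic tools.
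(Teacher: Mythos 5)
Your proof is correct, but it takes a more self-contained route than the paper at every step. The paper dispatches (a) in two lines by citing the closedness/horizon-cone criterion for linear images \cite[Theorem 3.10]{RoW98} together with $\Omega(A,B)^\infty=\{0\}\times\cK_A^\circ$ from \cite[Proposition 10]{BGH17}; your rescaling and compactness arguments are essentially a hands-on proof of that criterion in this special case, and they are sound (the trace bound $\|Y_t\|_F^2\le -2\tr(W_0)-2t\,\tr(W)$ does control $Y_t/\sqrt{t}$, and $\rge \tilde Y\subset\ker A$ lets Proposition \ref{prop:K_A}(b) absorb $\half\tilde Y\tilde Y^T$). For (b) the paper applies its appendix result on partial conjugates (Proposition \ref{prop:Partial Conj}) to read off $\dom \vphi(\bar X,\cdot)^*$ from an explicit formula for that conjugate, whereas you combine the easy inclusion $\Omega_2(A,B)\subset\dom F^*$ with the identity $F^\infty=\sigma_{\dom F^*}$ and a positive-homogeneity computation of $F^\infty$; this is a legitimate alternative that avoids the partial-conjugate machinery at the cost of the lsc/sup-splitting estimates. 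For (c) the paper simply quotes the formula for $\ri\Omega(A,B)$ from \cite[Proposition 8]{BGH17} and pushes it through $T(\ri\Omega(A,B))=\ri T(\Omega(A,B))$; you reprove that formula from scratch, and both your $T_\mu$ expansion (which I checked term by term) and the affine-hull contradiction via the direction $(0,uu^T)$ are correct --- the one nit is that the prolongation criterion you invoke is \cite[Theorem 6.4]{RTR70} rather than the line segment principle of Theorem 6.1. In sum, your approach buys independence from \cite{BGH17} and the appendix; the paper's buys brevity.
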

\begin{proof}  (a) With the linear map $T:(Y,W)\mapsto W$ we have $\Omega_2(A,B)=T(\Omega(A,B))$.  Therefore $\Omega_2(A,B)$ is  convex.   
By \cite[Proposition 10]{BGH17}, we have $\Omega(A,B)^\infty=\{0\}\times \cK^\circ_A$, and so
$\ker T\cap \Omega(A,B)^\infty =\{0\}$ giving the 
remainder of (a) by \cite[Theorem 3.10]{RoW98}. 
\smallskip

\noindent
(b) Recall from \cite[Theorem 4.1]{BuH15} that $\inter (\dom \sigma_{\vphi})=\set{(X,V)\in\bE}{V\in \inter \cK_A}$, Thus we can apply 
Proposition \ref{prop:Partial Conj} to $\bar g:=\vphi(\bar X,\cdot)$ to infer that 
\[
\bar g^*(W)=\inf_{Y:(Y,W)\in \Omega(A,B)}\ip{-\bar X}{Y}\quad (W\in \bS^n).
\] 
This proves the claim. 
\smallskip

\noindent
 (c) Observe that $\ri \Omega_2(A,B)=\ri T(\Omega(A,B))=T(\ri \Omega(A,B))$  and use \cite[Proposition 8]{BGH17} to get the first representation. The second one follows from (b). 
\end{proof}

\noindent
We now define the constraint qualifications  central to our study.  
Note that CCQ was previously introduced in Section \ref{ss:psi}. 

\begin{definition}[Constraint qualifications]\label{def:CQ} Let $p$ be given by \eqref{eq:p}. We say that $p$ satisfies
\begin{itemize}
%\item[i)] \underline{\em weak PCQ (WPCQ)} if $0\in \dom h^*+\Omega_2(A,B)$;
\item[(i)]   {\em PCQ:}   if $0\in \ri(\Omega_2(A,B) +\dom h^*)$;
\item[(ii)]  {\em strong PCQ (SPCQ):} if 
$0\in \inter (\Omega_2(A,B) +\dom h^*)$;
\item[(iii)]{\em boundedness  PCQ (BPCQ):} if $\dom h\cap \cK_A\neq \emptyset $ and $(\dom h)^\infty\cap\cK_A=\{0\}$;
\item[(iv)] {\em CCQ:} if $\ri(\dom h)\cap \inter \cK_A\neq \emptyset $.
\item[(v)] {\em strong CCQ (SCCQ):} if CCQ is satisfied and $\emptyset\ne\dom \psi^*(\cdot,0)$,
or equivalently,
\begin{equation}\label{eq:sccq}
\begin{aligned}
\emptyset\ne \Xi(A,B)&:=\set{ Y\in \Rnm}{  
AY=B,\;\half YY^T\in \dom h^*+\cK_A^\circ}
\\ &=
\set{Y\in\Rnm}{(Y,0)\in\Omega(A,B)+(\{0\} \times \dom h^*)}.
\end{aligned}
\end{equation}
\end{itemize}
\end{definition}

\noindent
The notation PCQ stands for {\em primal constraint qualification} while CCQ 
stands for {\em conjugate constraint qualification}. 
Theorem \ref{th:inf-proj psi}  and  Lemma \ref{lem:PCQ and duality}, respectively, 
give the following useful implications:
\begin{equation}\label{eq:Xi}
\begin{aligned}
\mathrm{CCQ} &\implies
\dom p^* = \Xi(A,B)
\\
\mathrm{SCCQ}&\implies
\dom p^* = \Xi(A,B) \ne \emptyset.
\end{aligned}
%\set{Y\in\Rnm}{(Y,0)\in\Omega(A,B)+(\{0\} \times \dom h^*)}
\end{equation}
%\[
%\begin{aligned}
%\mathrm{CCQ}&\implies
%\dom p^* = \set{Y\in\Rnm}{(Y,0)\in\Omega(A,B)+(\{0\} \times \dom h^*)},
%\\
%\mbox{\eqref{eq:sccq}}
%&\iff \ \exists\ Y\in\Rnm\ :\ (Y,0)\in\Omega(A,B)+(\{0\} \times \dom h^*).
%%\Omega(A,B)\cap(\Rnm \times (-\dom h^*)) \ne \emptyset,
%\end{aligned}
%\]
%In addition, we have
%\[
%\dom q_{\bar X}=\set{W\in\Sn}{\exists\,Y\in\Rnm\, :\, 
%(Y,W)\in \Omega(A,B)+(\{0\} \times \dom h^*)},
%\]
%where $q_{\bar X}$ is defined in \eqref{eq:qx}.

The following results clarify the relations between the various constraint qualifications.  
We lead with characterizations of PCQ and BPCQ. 

\begin{lemma}[Characterizations of (B)PCQ]\label{lem:PCQ Equiv} Let $p$ be given by \eqref{eq:p} and  $\bar X\in \dom p$, and set 
\begin{equation}
\psi_{\bar X}:= \psi(\bar X,\cdot)\quad (\bar X\in \R^{n\times m}).
\end{equation}
 \begin{itemize}
%\item[a)] $f_{\bar X}^\infty=\psi^\infty(0,\cdot)=\sigma_{\Omega_2(A,B)+\dom h^*}\;(\bar X\in \dom p)$.
 \item[(a)]The  following are equivalent: 
\begin{itemize}
\item [(i)] $0\in \ri (\dom \psi_{\bar X}^*)$;
\item[(ii)] PCQ holds for $p$;
%\item[iii)] $\ri(-\Omega_2(A,B))\cap \ri(\dom h^*)\neq \emptyset$;
\item[(iii)] $\exists\, Y\in \R^{n\times m}:\; AY=B,\quad \frac{1}{2}YY^T\in \ri(\cK^\circ_A+\dom h^*)$.
%\item[v)] $\exists (Y,W)\in\bE:\; AY=B,\; W\in \ri (\dom h^*)\AND u^T(W-\frac{1}{2}YY^T)u>0\quad (u\in \ker A)$. 
\end{itemize}
In addition, similar characterizations of SPCQ hold
by substituting the interior for the relative interior.
\item[(b)]  BPCQ holds for $p$ if and only if $ \dom h\cap \cK_A$ is nonempty and bounded.
\end{itemize}
\end{lemma}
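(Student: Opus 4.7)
The strategy is to express the conjugate of $\psi_{\bar X}$ as an infimal convolution and read off the relative interior of its domain. Since $\bar X\in\dom p$, there exists $V\in\Sn$ with $\psi(\bar X,V)<\infty$, so the functions $V\mapsto \vphi(\bar X,V)$ and $h$ both belong to $\Gamma_0(\Sn)$ and have intersecting domains. The standard Fenchel conjugate sum rule then gives
\[
\psi_{\bar X}^* \;=\; \cl\!\bigl[\vphi(\bar X,\cdot)^* \,\infconv\, h^*\bigr].
\]
Because the lower semicontinuous hull preserves the relative interior of the effective domain in finite dimensions, and the effective domain of an infimal convolution is the sum of the two domains, Lemma~\ref{lem:C}(b) yields
\[
\ri\dom\psi_{\bar X}^* \;=\; \ri\!\bigl(\dom\vphi(\bar X,\cdot)^* + \dom h^*\bigr) \;=\; \ri\!\bigl(\Omega_2(A,B)+\dom h^*\bigr),
\]
which is exactly the equivalence (i)$\iff$(ii). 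For (ii)$\iff$(iii), I would combine the identity $\ri(C_1+C_2)=\ri C_1 + \ri C_2$ with the formula for $\ri\Omega_2(A,B)$ in Lemma~\ref{lem:C}(c): writing $0=W+W'$ with $W\in\ri\Omega_2(A,B)$ and $W'\in\ri(\dom h^*)$ is equivalent to the existence of $Y$ with $AY=B$ and $\tfrac12YY^T\in\ri(\cK_A^\circ)+\ri(\dom h^*)=\ri(\cK_A^\circ+\dom h^*)$. The SPCQ analogue follows verbatim upon replacing $\ri$ by $\inter$, since for a convex set $D\subset\R^n$ one has $\inter D=\inter\cl D$, making the closure in the sum rule innocuous at the level of interiors.

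\textbf{Plan for part (b).} The forward direction (BPCQ $\Rightarrow$ $\dom h\cap\cK_A$ nonempty and bounded) is a normalization argument: nonemptiness is part of BPCQ, and if a sequence $\{v_k\}\subseteq \dom h\cap\cK_A$ had $\|v_k\|\to\infty$, a subsequential limit of $v_k/\|v_k\|$ would yield a unit vector $v$ lying in $(\dom h)^\infty$ (by the definition of the horizon cone with $\lambda_k=1/\|v_k\|\downarrow 0$) and in $\cK_A$ (since $\cK_A$ is a closed cone, hence closed under positive rescaling and limits), contradicting $(\dom h)^\infty\cap\cK_A=\{0\}$. For the backward direction, the key tool is the characterization ``a nonempty convex set is bounded iff its horizon cone is $\{0\}$'' applied to $\dom h\cap\cK_A$, together with the intersection rule $(C_1\cap C_2)^\infty=C_1^\infty\cap C_2^\infty$ for closed convex sets with nonempty intersection, relayed via $(\dom h)^\infty=(\cl\dom h)^\infty$ and $\cK_A^\infty=\cK_A$.

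\textbf{Main obstacle.} The delicate step is the backward implication of (b), because $h\in\Gamma_0(\Sn)$ does not force $\dom h$ to be closed, so one cannot blindly equate $(\dom h\cap\cK_A)^\infty$ with $(\dom h)^\infty\cap\cK_A$ — indeed, one direction of that equality is straightforward from the definition while the other requires producing an unbounded sequence in $\dom h\cap\cK_A$ itself (rather than in $\cl\dom h\cap\cK_A$) from a nonzero element of $(\dom h)^\infty\cap\cK_A$. I expect the rigorous argument will exploit the cone structure of $\cK_A$ to translate recession directions back into $\dom h$ through points of $\dom h\cap\cK_A$ and convexity; in the intended applications (indicator functions of closed sets, support functions, and other natural choices of $h$) the domain is closed and this subtlety dissolves. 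By contrast, part (a) is routine once the conjugate sum rule is in hand.
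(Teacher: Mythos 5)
Your part (a) is essentially the paper's own argument: the authors likewise write $\psi_{\bar X}^*=\cl\bigl(\vphi(\bar X,\cdot)^*\infconv h^*\bigr)$, pass to $\ri(\dom\psi_{\bar X}^*)=\ri\bigl(\Omega_2(A,B)+\dom h^*\bigr)$ via Lemma~\ref{lem:C}, and read off (iii) from the description of $\ri\Omega_2(A,B)$; there is nothing to add there. Your forward implication in (b), via the elementary inclusion $(\dom h\cap\cK_A)^\infty\subset(\dom h)^\infty\cap\cK_A^\infty$ and the horizon criterion for boundedness, is also correct.

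The gap you flag in the backward implication of (b) is genuine, and your hope that the cone structure of $\cK_A$ dissolves it is not borne out: without an additional hypothesis that implication is false. Take $n=2$, $A=0$ (so $\cK_A=\bS^2_+$) and let $h\in\Gamma_0(\bS^2)$ be the closed perspective-type function
\begin{equation*}
h(V)=\frac{V_{11}^2+V_{12}^2}{-V_{22}}\ \text{ if }V_{22}<0,\qquad h(0)=0,\qquad h(V)=+\infty\ \text{ otherwise},
\end{equation*}
whose domain is $\set{V\in\bS^2}{V_{22}<0}\cup\{0\}$. Then $\dom h\cap\bS^2_+=\{0\}$ is nonempty and bounded, yet $(\dom h)^\infty=(\cl\dom h)^\infty=\set{V}{V_{22}\le 0}$ meets $\bS^2_+$ in the ray $\set{\diag(t,0)}{t\ge 0}\ne\{0\}$, so BPCQ fails. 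The paper's one-line proof (``follows readily from \cite[Theorem 3.5, Proposition 3.9]{RoW98}'') silently invokes the identity $(\dom h\cap\cK_A)^\infty=(\dom h)^\infty\cap\cK_A$, whose nontrivial inclusion requires the intersection rule for horizon cones and hence an extra hypothesis --- e.g.\ that $\dom h$ is closed, or that $\ri(\dom h)\cap\inter\cK_A\ne\emptyset$ (i.e.\ CCQ), either of which gives $\cl(\dom h\cap\cK_A)=\cl(\dom h)\cap\cK_A$ and closes your argument exactly as you outlined. This covers the indicator case $h=\delta_\cV$ with $\cV$ closed, but for $h=\sigma_\cV$ the domain $\barr\cV$ need not be closed, so the caveat is not vacuous even within the paper's applications. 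In short: your diagnosis of where the difficulty sits is exactly right, and the difficulty lies in the statement as given, not in your write-up.
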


\begin{proof} 
(a) Defining $\vphi_{\bar X}:=\vphi(\bar X,\cdot)$,  we find that 
$\vphi_{\bar X}^*=\cl(\vphi_{\bar X}^*\infconv h^*)$ and therefore 
$\ri(\dom \psi_{\bar X}^*)=\ri(\dom \vphi_{\bar X}^*+\dom h^*)=\ri (\Omega_2(A,B)+\dom h^*)$, see Lemma \ref{lem:C} (c).   This proves the first two equivalences. The third follows readily from the representation of $\ri(\Omega(A,B))$ from \cite[Proposition 8]{BGH17}.

\smallskip

\noindent
(b) Follows readily from \cite[Theorem 3.5, Proposition 3.9]{RoW98}.
\end{proof}

\noindent
We point out that, under PCQ,  Lemma \ref{lem:PCQ Equiv}  shows that the objective functions $\psi(\bar X,\cdot)\;(\bar X\in \dom p)$  occuring in the definition of $p$ in \eqref{eq:p} are {\em weakly coercive} 
\cite[Definition 3.2.1]{AuT03} when proper, see \cite[Theorem 3.2.1]{AuT03}.  This tells us that the infimum in \eqref{eq:p} is attained under PCQ if finite
\cite[Proposition 3.2.2, Theorem 3.4.1]{AuT03}, a fact that is stated again 
(and derived alternatively) in Theorem \ref{th:p under PCQ}.  Under SPCQ, the objective functions $\psi(\bar X,\cdot)\;(\bar X\in \dom p)$ are {\em level-bounded} (or {\em coercive}), in which case the $\argmin \psi(\bar X,\cdot)$ is nonempty and compact (and clearly convex).  
Finally, it was shown in Lemma \ref{lem:p} (e) that $p$ is closed proper convex
under BPCQ.

The next result shows the relations between the different notions of PCQ.

\begin{lemma}\label{lem:PCQ Implication} Let $p$ be given by \eqref{eq:p}. Then the following hold:
\begin{itemize}
\item[(a)]  BPCQ  \quad  $\Longrightarrow$\quad  SPCQ \quad $\Longrightarrow$ \quad PCQ.
\item[(b)] If $\inter (\dom h^*)\cap \inter(- \Omega_2(A,B))\neq \emptyset $, then PCQ and SPCQ are equivalent.
\end{itemize}
\end{lemma}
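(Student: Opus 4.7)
For (a), the implication SPCQ $\Rightarrow$ PCQ is immediate because $\inter C\subset\ri C$ for every convex set $C\subset\bS^n$; apply this to $C=\Omega_2(A,B)+\dom h^*$. The substance lies in BPCQ $\Rightarrow$ SPCQ. Under BPCQ, Lemma \ref{lem:PCQ Equiv}(b) gives that $\dom h\cap\cK_A$ is nonempty and bounded, so $\dom p\neq\emptyset$ by Lemma \ref{lem:p}(b); I will fix some $\bar X\in\dom p$ and work with the slice $\psi_{\bar X}:=\psi(\bar X,\cdot)$.

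By Lemma \ref{lem:conj psi}(a) this slice lies in $\Gamma_0(\bS^n)$, and from the description of $\dom\vphi$ in \eqref{eq:dom support D} we get $\emptyset\neq\dom\psi_{\bar X}\subset \dom h\cap\cK_A$, so that $\dom\psi_{\bar X}$ is nonempty and bounded. As a proper lsc convex function, $\psi_{\bar X}$ attains its infimum on the compact set $\cl(\dom\psi_{\bar X})$, hence $\mu:=\inf\psi_{\bar X}\in\R$. The elementary bound
\[
\psi_{\bar X}^*(W)=\sup_V\bigl[\ip{V}{W}-\psi_{\bar X}(V)\bigr]\leq \sig_{\dom\psi_{\bar X}}(W)-\mu<\infty \quad (W\in\bS^n),
\]
in which finiteness of the support function follows from $\dom\psi_{\bar X}$ being bounded, then yields $\dom\psi_{\bar X}^*=\bS^n$. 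Invoking the ``$\inter$-variant'' of Lemma \ref{lem:PCQ Equiv}(a) (stated just after its item (iii)), this is precisely SPCQ.

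For (b), the plan is to show that the hypothesis already forces SPCQ, so that the claimed equivalence follows trivially from (a). Pick a witness $W_0\in\inter(\dom h^*)\cap\inter(-\Omega_2(A,B))$ and choose $\eps>0$ so small that $B_\eps(W_0)\subset\dom h^*$ and $B_\eps(-W_0)\subset\Omega_2(A,B)$. For any $W\in B_\eps(0)$, the splitting $W=(W-W_0)+W_0$ gives $W-W_0\in B_\eps(-W_0)\subset\Omega_2(A,B)$ and $W_0\in\dom h^*$, so $W\in\Omega_2(A,B)+\dom h^*$. Hence $B_\eps(0)\subset\Omega_2(A,B)+\dom h^*$, which is SPCQ.

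I do not anticipate a serious obstacle. The only slightly delicate point is the passage ``bounded nonempty effective domain implies full-domain conjugate,'' which is a classical observation following from the support-function bound above; everything else reduces to elementary manipulations with interior and relative interior.
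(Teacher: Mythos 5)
Your proof is correct and follows essentially the same route as the paper: for part (a) you reduce BPCQ $\Rightarrow$ SPCQ to showing $0\in\inter(\dom\psi_{\bar X}^*)$ from the boundedness of $\dom\psi_{\bar X}\subset\dom h\cap\cK_A$ and then invoke the interior variant of Lemma \ref{lem:PCQ Equiv}(a), exactly as the paper does --- the only difference being that where the paper cites level-boundedness via \cite[Theorem 11.8]{RoW98}, you supply the explicit support-function bound $\psi_{\bar X}^*\le\sigma_{\dom\psi_{\bar X}}-\mu$. For (b) the paper only remarks that the claim follows from the definitions; your argument, which observes that the hypothesis forces $0\in\inter(\dom h^*)+\inter(\Omega_2(A,B))\subset\inter(\Omega_2(A,B)+\dom h^*)$ and hence SPCQ outright, is a valid (indeed slightly stronger) way to obtain the stated equivalence.
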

\begin{proof} (a) The first implication can be seen as follows:  If BPCQ holds then $\dom \psi_{\bar X}\subset \dom h\cap \cK_A$ is bounded  (and nonempty exactly if $\bar X\in \dom p$). Therefore $\psi_{\bar X}$ is level-bounded for all $\bar X\in \dom p$, i.e. 
$0\in \inter (\dom \psi_{\bar X}^*)\;(\bar X\in \dom p)$, 
see e.g. \cite[Theorem 11.8]{RoW98}. In view of Lemma \ref{lem:PCQ Equiv} (a) this implies that SPCQ holds.  

The second implication is trivial.

\smallskip

\noindent
(b) This is follows directly from the definitions.
\end{proof}

%\noindent
%We point out that $\Omega_2(A,B)$ has a nonempty interior if  $A=0$ and $B=0$, in which case $\Omega_2(0,0)=\bS^n_-$. 

\noindent
We now provide characterizations for CCQ. 

\begin{lemma}[Characterizations of CCQ]\label{lem:CCQ equiv} Let $p$ be given by \eqref{eq:p}. Then 
\begin{center}
(i)\;   $\dom{h}\cap\inter \cK_A \ne\emptyset$  $\iff$ (ii)\;CCQ holds for $p$ $\iff$ (iii)\; $ (-\cK_A^\circ)\cap \hzn h^*=\{0\}$. 
\end{center}
\end{lemma}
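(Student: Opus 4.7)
The plan is to note that (i) $\iff$ (ii) is already established in \eqref{eq:simple ccq} via the line-segment principle, and then to reduce the nontrivial equivalence (ii) $\iff$ (iii) to a separation argument combined with a standard recession/support-function duality. The key ingredient is the classical identity
\[
(h^*)^\infty \;=\; \sig_{\dom h}
\]
valid for any $h \in \Gamma_0(\bS^n)$ (cf.\ \cite[Theorem~13.3]{RTR70}), which yields the convenient reformulation $\hzn h^* = \set{W\in\bS^n}{\sig_{\dom h}(W)\le 0}$. Under this identification, membership of $W\ne 0$ in $(-\cK_A^\circ)\cap \hzn h^*$ means $\ip{W}{V}\ge 0$ for every $V\in\cK_A$ while simultaneously $\ip{W}{V}\le 0$ for every $V\in\dom h$.

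For the direction (ii) $\Longrightarrow$ (iii), I would fix $V\in\dom h\cap\inter\cK_A$ (available by (i)) and pick any $W\in(-\cK_A^\circ)\cap\hzn h^*$. Evaluating the two sign conditions at $V$ forces $\ip{W}{V}=0$. Since $V\in\inter\cK_A$, the perturbation $V+tW$ belongs to $\cK_A$ for all sufficiently small $|t|$, hence $\ip{-W}{V+tW}\le 0$. Combined with $\ip{W}{V}=0$ this reduces to $t\norm{W}^2\ge 0$ for both positive and negative small $t$, which forces $W=0$.

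For (iii) $\Longrightarrow$ (ii) I argue by contraposition. If (ii) fails, then $\dom h$ and $\inter\cK_A$ are disjoint nonempty convex sets, the second of which is open (and nonempty by Proposition~\ref{prop:K_A}(c)). The basic separation theorem provides $W\ne 0$ satisfying $\sig_{\dom h}(W)\le \inf_{U\in\cK_A}\ip{W}{U}$. Because $\cK_A$ is a cone, the infimum on the right is $0$ when $-W\in\cK_A^\circ$ and $-\infty$ otherwise; the latter is precluded by $\dom h\ne\emptyset$. Hence $-W\in\cK_A^\circ$ and $\sig_{\dom h}(W)\le 0$, i.e.\ $W\in\hzn h^*$, producing a nonzero element of $(-\cK_A^\circ)\cap\hzn h^*$ and violating (iii).

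The main obstacle is marshalling the opening identity $(h^*)^\infty=\sig_{\dom h}$; once this classical recession--support duality is invoked, both directions of (ii) $\iff$ (iii) collapse to short, elementary arguments with convex cones and the standard separation theorem.
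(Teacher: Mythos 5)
Your proof is correct, but it takes a genuinely different route from the paper's for the nontrivial equivalence (ii) $\iff$ (iii). The paper invokes \cite[Corollary 16.2.2]{RTR70} applied to $f_1:=h$ and $f_2:=\delta_{\cK_A}$, which characterizes $\ri(\dom h)\cap\inter\cK_A\ne\emptyset$ by the inconsistency of a two-inequality system in $(h^*)^\infty$ and $\sigma_{\cK_A}$, and then translates that system into membership in $(-\cK_A^\circ)\cap\hzn h^*$; the slightly delicate part there is showing that every nonzero element of that intersection actually satisfies the strict second inequality. You instead work from first principles: the recession--support duality $(h^*)^\infty=\sigma_{\dom h}$ (\cite[Theorem 13.3]{RTR70}) converts $\hzn h^*$ into $\set{W}{\sigma_{\dom h}(W)\le 0}$, after which (ii) $\Rightarrow$ (iii) is a clean perturbation argument at an interior point $V$ of $\cK_A$ (forcing $\ip{W}{V}=0$ and then $W=0$ via $V+tW\in\cK_A$ for small $|t|$ of both signs), and (iii) $\Rightarrow$ (ii) is a standard separation of $\dom h$ from the nonempty open convex set $\inter\cK_A$, with the cone structure of $\cK_A$ ruling out the $-\infty$ case of the infimum. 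All the ingredients you use are legitimately available in this setting ($\inter\cK_A\ne\emptyset$ since $I\in\inter\cK_A$, and $h\in\Gamma_0(\Sn)$ makes Theorem 13.3 applicable); the only cosmetic point is that the separation theorem yields the orientation $\sigma_{\dom h}(W)\le\inf_{U\in\cK_A}\ip{W}{U}$ only up to replacing $W$ by $-W$, which you should state as a harmless normalization. What your approach buys is a self-contained, elementary argument exposing the geometric mechanism; what the paper's buys is brevity, since Corollary 16.2.2 packages the separation and recession bookkeeping in one citation.
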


\begin{proof}
%The first equivalence is a direct consequence of the {\em line segment principle} (cf. \cite[Theorem 6.1]{RTR70}): 
The first equivalence was previously observed in \eqref{eq:simple ccq}.
%The fact that ii) implies i) is obvious. For the converse direction let $y\in \dom h\cap \inter \cK_A$ and pick $x\in \ri (\dom h)$. Then $z_\lambda:= \lambda x+(1-\lambda)y\in \ri (\dom h)$ for all $\lambda\in (0,1]$. Letting $\lambda \downarrow 0$ we find that $z_\lambda \in \ri (\dom h)\cap \inter \cK_A$ for all $\lambda\in (0,1]$ sufficiently small, which proves that $\ri (\dom h)\cap \inter \cK_A\neq \emptyset$. 
%
The second equivalence can be seen as follows: 
We apply \cite[Corollary 16.2.2]{RTR70} (to $f_1:=h$ and $f_2:=\delta_{\cK_A}$).
This result tells us that
$\ri(\dom{h})\cap\inter \cK_A \ne\emptyset$ if and only if there does not exist
a matrix $W\in \Sn$ such that
\begin{equation}\label{eq:cq equivalence}
(h^*)^\infty(W)+\support{-W}{\cK_A}\le 0\AND
(h^*)^\infty(-W)+\support{W}{\cK_A}> 0.
\end{equation}
Since $\support{-W}{\cK_A}=\indicator{-W}{\cK_A^\circ}$, the first of these conditions
is equivalent to the condition $W\in (-\cK_A^\circ)\cap\hzn h^*$.  In particular, we can infer that $(-\cK_A^\circ)\cap\hzn h^*=\{0\}$ gives the inconsistency of  \eqref{eq:cq equivalence} and thus establishes  (iii)$\Rightarrow$(ii).

The second condition in \eqref{eq:cq equivalence} implies  $W\neq 0$. Thus, 
in view of Proposition \ref{prop:K_A} (b), $0\neq -W\in\cK_A^\circ\subset\Sn_+$, and hence 
$W\notin \cK_A^\circ$. Thus, every nonzero element
of the set $(-\cK_A^\circ)\cap\hzn h^*$ satisfies \eqref{eq:cq equivalence}.
Thus, the nonexistence of a $W$ satisfying \eqref{eq:cq equivalence}
implies  that $(-\cK_A^\circ)\cap\hzn h^*=\{0\}$, which altogether  proves the result.
\end{proof}

\noindent
 Note that for any proper, convex function $f$ we always 
have $\hzn f\subset (\dom f)^\infty$ which, in view of Lemma \ref{lem:CCQ equiv}, implies that the condition
\begin{equation}\label{eq:SCCQ}
(-\cK_A^\circ)\cap (\dom h^*)^\infty=\{0\}
\end{equation}
is stronger than CCQ. However, \eqref{eq:SCCQ} is not used in our study. 
%Moreover, since $\cK_{A}=\bS^n$ if and only if $A$ has full column rank,  we have 
%\(
%\rank A=n %\  \Longrightarrow \  \text{CCQ}.
%\)
%if and only if CCQ is satisfied.

\subsection{Infimal projection II} We  return to our analysis of the infimal  projection defining $p$ in \eqref{eq:p}. 
The following result shows that the two key conditions 
appearing in Proposition \ref{prop:Shifted duality},
$0\in \ri(\dom q_{\bar X})$ and  $\bar X\in \ri(\dom p)$,  
correspond nicely to the constraint qualifications studied in Section \ref{sec:CQ}.

\begin{corollary} \label{cor:CQ duality} Let $p$ be defined by \eqref{eq:p}, let $\bar X\in \dom p$ and $q_{\bar X}$ be defined by \eqref{eq:qx}. Then the following hold:
\begin{itemize}
\item[(a)] PCQ holds for $p$ if and only if $0\in \ri(\dom q_{\bar X})$;
\item[(b)] If CCQ holds, then  $\bar X\in \ri(\dom p)$.
\end{itemize}
\end{corollary}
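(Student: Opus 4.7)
The plan is to observe that both assertions follow almost immediately from results already proved in the paper, essentially by unpacking definitions.

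For part (a), the argument is a one-line application of Lemma \ref{lem:PCQ and duality}. That lemma gives the identity
\[
\dom q_{\bar X} = \Omega_2(A,B) + \dom h^*,
\]
and the definition of PCQ (Definition \ref{def:CQ}(i)) is precisely
\[
0 \in \ri(\Omega_2(A,B) + \dom h^*).
\]
Combining these two facts yields the equivalence $0 \in \ri(\dom q_{\bar X}) \iff \mathrm{PCQ}$. No further work is needed; in particular the argument is independent of the choice of $\bar X \in \dom p$, which matches the fact that PCQ is a property of $p$ rather than of $\bar X$.

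For part (b), the key observation is that CCQ already gives the strongest possible conclusion via Theorem \ref{th:inf-proj psi}(c), namely $\dom p = \R^{n\times m}$. Since the affine hull of $\dom p$ is then the whole space, we have
\[
\ri(\dom p) = \dom p = \R^{n\times m},
\]
so trivially $\bar X \in \ri(\dom p)$ for every $\bar X$, and in particular for every $\bar X \in \dom p$.

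There is no real obstacle here: both parts are bookkeeping consequences of results already established (Lemma \ref{lem:PCQ and duality} for (a); Theorem \ref{th:inf-proj psi}(c) for (b)). The only minor subtlety worth flagging explicitly in the written proof is that in (a), one does not need to verify separately that $\dom q_{\bar X} \ne \emptyset$ or that $\bar X \in \dom p$ plays a role --- the equivalence is purely between two statements about the set $\Omega_2(A,B)+\dom h^*$. Likewise, in (b) one should cite Theorem \ref{th:inf-proj psi}(c) rather than re-derive $\dom p = \R^{n\times m}$ from the representation of $\dom p$ in Lemma \ref{lem:p}(b).
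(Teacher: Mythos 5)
Your proposal is correct and follows essentially the same route as the paper: part (a) is the immediate combination of Lemma \ref{lem:PCQ and duality} with Definition \ref{def:CQ}(i), and part (b) invokes the fact, established in (the proof of) Theorem \ref{th:inf-proj psi}(c), that CCQ forces $\dom p=\R^{n\times m}$, whence $\ri(\dom p)=\R^{n\times m}$ and the conclusion is trivial.
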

\begin{proof} (a) Follows immediately from Lemma \ref{lem:PCQ and duality} and the definition of PCQ.
\smallskip

\noindent
(b) Under CCQ we have $\dom p=\R^{n\times m}$ 
(see the proof of Theorem \ref{th:inf-proj psi} (c.II)),  hence (b) follows.
\end{proof}

\noindent
As a consequence of Corollary \ref{cor:CQ duality} and Proposition \ref{prop:Shifted duality} we can add to the properties of $p$ proven in Theorem \ref{th:inf-proj psi}. 

\begin{theorem}[Properties of $p$ under PCQ]\label{th:p under PCQ} Let $p$, defined in \eqref{eq:p}, be such that PCQ is satisfied
and $\dom h\cap \cK_A\neq \emptyset$ (i.e. $\dom p\neq \emptyset$). 
Let $q_{\bar X}$ be given by \eqref{eq:qx}. Then the following hold:
\begin{itemize}
\item[(a)] $p\in \Gamma_0(\R^{n\times m})$;
\item[(b)]  $ \argmin_{V}\psi(\bar X, V)\neq \emptyset \quad (\bar X\in\dom p)$  \quad (primal attainment);
\item[(c)] $p(\bar X)=-q_{\bar X}(0)\quad (\bar X\in \dom p)$\quad 
(zero duality gap).
\end{itemize}
\end{theorem}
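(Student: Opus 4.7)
The plan is to obtain all three assertions simultaneously from Proposition~\ref{prop:Shifted duality}, whose main hypothesis is furnished by Corollary~\ref{cor:CQ duality}(a). The pivotal observation is Lemma~\ref{lem:PCQ and duality}: the set $\dom q_{\bar X} = \Omega_2(A,B) + \dom h^*$ does not depend on the base point $\bar X$. Consequently, PCQ---which by Corollary~\ref{cor:CQ duality}(a) is equivalent to $0 \in \ri(\dom q_{\bar X})$---holds \emph{uniformly} in $\bar X \in \R^{n \times m}$, and in particular at every $\bar X \in \dom p$.

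Fix an arbitrary $\bar X \in \dom p$, which is nonempty by hypothesis. Applying Proposition~\ref{prop:Shifted duality}(a) at $\bar X$ yields $p(\bar X) = -q_{\bar X}(0) \in \R$ together with $\argmin_V \psi(\bar X,\cdot) \neq \emptyset$; these are precisely the zero-duality-gap and primal-attainment assertions (c) and (b) of the theorem. Since $p$ is convex by Lemma~\ref{lem:p}(a) and now finite on the nonempty set $\dom p$, it is proper convex, settling half of (a).

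To complete (a), I would use Proposition~\ref{prop:Shifted duality}(c)---again applicable at every $\bar X \in \dom p$ by the uniform PCQ---to conclude that $p$ is lower semicontinuous at every point of $\dom p$. To promote pointwise lsc on $\dom p$ to $p \in \Gamma_0(\R^{n\times m})$, one exploits the elementary identity $p^*(Y) = \psi^*(Y,0)$ that is immediate from the definition of $p$ as an infimal projection, giving $p^{**}(X) = -q_X(0)$ for every $X$; combined with Proposition~\ref{prop:Shifted duality}(a) this yields $p = p^{**}$ on $\dom p$. The main (and really only) obstacle is to exclude the possibility $p^{**}(X_0) < +\infty = p(X_0)$ for some $X_0 \in \cl(\dom p)\setminus \dom p$, which is the sole way closedness could fail for a proper convex function whose biconjugate already coincides with it on $\dom p$. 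This is ruled out by invoking the perturbation-duality framework of \cite[Chapter~5]{AuT03} that underlies Proposition~\ref{prop:Shifted duality}: uniform validity of PCQ across all base points precludes such a gap and forces $p = \cl p = p^{**}$, completing the proof.
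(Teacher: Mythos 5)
Your proposal is correct and follows essentially the same route as the paper's proof: both pass through Corollary~\ref{cor:CQ duality}(a) to translate PCQ into $0\in\ri(\dom q_{\bar X})$, then harvest (b), (c), and lower semicontinuity on $\dom p$ from Proposition~\ref{prop:Shifted duality}(a) and (c), and finally upgrade to $p\in\Gamma_0$ by appealing to the perturbation-duality theory of \cite[Chapter 5]{AuT03} (the paper cites ``the discussion on p.~153'' at the same level of detail as your closing appeal). Your added observations---that $\dom q_{\bar X}=\Omega_2(A,B)+\dom h^*$ is independent of the base point, so PCQ holds at \emph{every} $\bar X$, and that $-q_X(0)=p^{**}(X)$---are exactly the ingredients that make the closedness step work, and your explicit isolation of the only possible failure mode (points of $\cl(\dom p)\setminus\dom p$) is if anything more transparent than the paper's treatment.
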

\begin{proof} Let $\bar X\in \dom p$.  Under PCQ, by Corollary \ref{cor:CQ duality},  we have $0\in \ri(\dom q_{\bar X})$. 
Hence, by Proposition \ref{prop:Shifted duality} (a), 
there is a $\bar V\in\Sn$ such that $p(\bar X)=\psi(\bar X,\bar V)$, and so,
by Proposition \ref{prop:Shifted duality} (c),
$p$ is lsc at $\bar X$ with $p(\bar x)\in \R$.  The discussion in \cite[p.~153]{AuT03} tells us that $p$ is, in fact, closed, proper, convex.

Finally, the equality $p(\bar X)=-q_{\bar X}(0)$, also follows from 
Proposition \ref{prop:Shifted duality} (a).
\end{proof}

\noindent
Theorem \ref{th:p under PCQ} can be proven entirely without 
the shifted duality framework in Proposition \ref{prop:Shifted duality} by
using the linear projection $L:(X,V)\to X$ used implicitly throughout our study.  It can be seen that $p=L\psi$ is a {\em linear image} in the sense described in \cite[p.~38]{RTR70}.  Then \cite[Theorem 9.2]{RTR70} gives all statements from Proposition \ref{th:p under PCQ} 
after realizing that the constraint qualification in \cite[Theorem 9.2]{RTR70}, 
%which for $p=L\psi$ 
which reads 
\begin{equation}\label{eq:LinIm CQ}
\psi(0,V)>0 \quad \text{or}\quad \psi^\infty(0,-V)\leq 0\quad (V\in\bS^n),
\end{equation}
since $\ker L=\{0\}\times \bS^n$,  
is equivalent to PCQ in this setting.
However,
%for the sake of uniformity, 
we chose to derive Theorem \ref{th:p under PCQ} from the shifted duality scheme since this assists in the subdifferential analysis.

The next result follows readily from the foregoing study.

\begin{corollary}\label{cor:P+C=S}  Let $p$ be given by \eqref{eq:p}
and $\eta_0$ by \eqref{eq:eta0}. 
If PCQ and CCQ  are satisfied for $p$ then the following hold: 
\begin{itemize}
\item[(a)] SCCQ holds and $p$ is finite-valued.
\item[(b)] (primal attainment) $p\in \Gamma_0(\R^{n\times m})$ is finite-valued and for all $\bar X\in \R^{n\times m}$ there exists $\bar V$ such that $p(\bar X)=\psi(\bar X,\bar V)$.
\item[(c)] (dual attainment) $p^*=\eta_0$ and for all $\bar Y\in \dom p^*$ there exists $\bar W$ such that $(\bar Y,\bar W)\in \Omega(A,B)$ and $p^*(\bar Y)=h^*(-\bar W)$.

\end{itemize}
\end{corollary}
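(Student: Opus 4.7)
The plan is essentially to assemble the previous results in the right order, since each part of the corollary follows fairly directly from Theorem \ref{th:inf-proj psi} and Theorem \ref{th:p under PCQ} once the hypotheses are verified.

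First, I would observe that CCQ implies $\dom h\cap\cK_A\ne\emptyset$ (in fact $\ri(\dom h)\cap\inter\cK_A\ne\emptyset$, which is stronger), so by Lemma \ref{lem:p}(b), $\dom p\ne\emptyset$. Applying Theorem \ref{th:inf-proj psi}(c) under CCQ then yields $\dom p=\R^{n\times m}$. With $\dom p\ne\emptyset$ and PCQ in force, Theorem \ref{th:p under PCQ}(a) gives $p\in\Gamma_0(\R^{n\times m})$. Since $p$ is proper and has full domain, it must be finite-valued. To derive SCCQ, I invoke Theorem \ref{th:inf-proj psi}(c.II): under CCQ, $p$ is either finite-valued or identically $-\infty$; since we have just shown it is finite-valued, the equivalent condition $\dom\psi^*(\cdot,0)\ne\emptyset$ (equivalently, $\Xi(A,B)\ne\emptyset$) holds, which is precisely SCCQ. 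This establishes (a).

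For (b), the fact that $p\in\Gamma_0(\R^{n\times m})$ is finite-valued was just established in (a). Primal attainment, i.e., the existence of $\bar V\in\Sn$ with $p(\bar X)=\psi(\bar X,\bar V)$ for every $\bar X\in\R^{n\times m}=\dom p$, is exactly the content of Theorem \ref{th:p under PCQ}(b), which only requires PCQ and $\dom h\cap\cK_A\ne\emptyset$, both of which are in place.

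For (c), under CCQ, Theorem \ref{th:inf-proj psi}(c.I) gives $p^*=\eta_0$ and asserts that the infimum in the definition of $\eta_0$ is attained at every $\bar Y\in\dom p^*$. Concretely, there exists $W\in\dom h^*$ with $(\bar Y,-W)\in\Omega(A,B)$ such that $p^*(\bar Y)=h^*(W)$. Setting $\bar W:=-W$, this rewrites as $(\bar Y,\bar W)\in\Omega(A,B)$ with $p^*(\bar Y)=h^*(-\bar W)$, as desired.

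Overall this is mostly a routine bookkeeping argument, so there is no real obstacle; the only thing one must be careful about is the order of invocations and, in (c), the sign change between the variable $W$ appearing in $\eta_0$ and the $\bar W$ of the statement. The corollary's real content is conceptual: the conjunction of the primal constraint qualification PCQ and the conjugate constraint qualification CCQ automatically upgrades CCQ to SCCQ and simultaneously delivers both primal and dual attainment together with a zero duality gap.
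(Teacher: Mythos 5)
Your proof is correct, and in parts (b) and (c) it follows the paper's own route essentially verbatim (assembling Theorem \ref{th:inf-proj psi} and Theorem \ref{th:p under PCQ}, with the sign change $\bar W=-W$ handled correctly in (c)). The one place where you take a genuinely different path is part (a): the paper derives SCCQ directly from the geometric characterization of PCQ in Lemma \ref{lem:PCQ Equiv}(a)(iii), which immediately exhibits a point of $\Xi(A,B)$, whereas you never touch that lemma and instead argue indirectly --- CCQ gives $\dom p=\R^{n\times m}$, PCQ gives $p\in\Gamma_0$, hence $p$ is finite-valued, and then the equivalence in Theorem \ref{th:inf-proj psi}(c.II) is read backwards to conclude $\dom\psi^*(\cdot,0)\neq\emptyset$, i.e.~SCCQ. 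Both arguments are sound; the paper's is shorter and more ``primal--geometric,'' while yours has the mild advantage of reusing only the two main theorems already needed for (b) and (c), at the cost of a detour through the finiteness of $p$. As a side remark, your citation for the dual attainment in (c) --- Theorem \ref{th:inf-proj psi}(c.I) --- is actually the more accurate reference than the one given in the paper's own proof of that part.
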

\begin{proof} (a) Follows readily from Lemma \ref{lem:PCQ Equiv} a) and the definition of SCCQ. 
\smallskip

\noindent
(b) By (a), SCCQ holds, so the first statement follows from  Theorem \ref{th:inf-proj psi} (c). The second is due to Theorem \ref{th:p under PCQ} (b).
\smallskip

\noindent
(c) Since SCCQ holds, see (b),  Theorem \ref{th:p under PCQ} (c) applies.

\end{proof}

\noindent
The table below   summarizes most of  our findings so far. Here $\bar X\in\dom p$. % and $\bar Y\in \dom p^*$.
\\

\begin{center}
\begin{tabular}{|c|c|c|c|c|c|c|}
\hline
Consequence$\backslash$Hypoth.& PCQ & SPCQ  & BPCQ & CCQ & SCCQ &  PCQ+CCQ\\
\hline
$p\in \Gamma_0\vee p\equiv -\infty$ & \checkmark & \checkmark& \checkmark& \checkmark& \checkmark& \checkmark \\
\hline
$p\in \Gamma_0$ & \checkmark & \checkmark& \checkmark& & \checkmark& \checkmark \\

\hline
$p(\bar X)=-q_{\bar X}(0)$ & \checkmark & \checkmark& \checkmark&\checkmark\footnotemark[1] &\checkmark & \checkmark  \\
\hline 
$\argmin \psi(\bar X,\cdot)\neq \emptyset$ & \checkmark & \checkmark& \checkmark& & & \checkmark   \\
\hline
$\argmin \psi(\bar X,\cdot)$ compact & & \checkmark & \checkmark\footnotemark[2] & & &   \checkmark  \\
\hline
$\dom p=\R^{n\times m}$  & & & &   \checkmark &   \checkmark&   \checkmark\\
\hline
$\displaystyle\argmin_{(\bar Y,T)\in \Omega(A,B)} \hspace{-3mm}h^*(-T)\neq \emptyset$ & & & &   \checkmark &   \checkmark&   \checkmark\\ 
\hline 
\end{tabular}
\footnotetext[1]{$p(\bar X)\equiv -\infty$ is possible.}
\footnotetext[2]{BPCQ also implies that $\dom \psi(\bar X,\cdot)$ is bounded.}
\end{center}
%\captionof{table}{Constraint qualifcations for $p$ and their implications} 
\label{tab:Overview}

\vspace{0.3cm}

%\begin{center}
%\begin{tabular}{|c|c|c|c|c|c|}
%\hline
%Consequence$\backslash$Hypothesis& PCQ & SPCQ  & BPCQ & CCQ & PCQ + CCQ\\
%\hline
%$p \in \Gamma$ &  \checkmark & \checkmark & \checkmark &\checkmark & \checkmark \\
%\hline
%$p\in \Gamma_0$ &  \checkmark & \checkmark & \checkmark & \checkmark & \checkmark\\
%\hline
%$p(\bar X)=-q_{\bar X}(0)$ &  \checkmark & \checkmark &  \checkmark & \checkmark & \checkmark\\
%\hline 
%$\argmin \psi(\bar X,\cdot)\neq \emptyset$&   \checkmark &\checkmark & \checkmark & & \checkmark\\
%\hline
%$\argmin \psi(\bar X,\cdot)$ compact &   &\checkmark &  \checkmark\footnotemark&   & \checkmark\\
%\hline
%$\dom p=\R^{n\times m}$ &  & &  &\checkmark& \checkmark\\
%\hline
%$p=p^{**}$ & \checkmark &\checkmark& \checkmark &\checkmark& \checkmark\\
%\hline
%$\displaystyle\argmin_{(\bar Y,T)\in \Omega(A,B)} \hspace{-3mm}h^*(-T)\neq \emptyset$ & &  &  &\checkmark& \checkmark\\
%\hline
%\end{tabular}
%\end{center}
%%\captionof{table}{Constraint qualifcations for $p$ and their implications} 
%\label{tab:Overview}
%
%\footnotetext{ $\dom \psi(\bar X,\cdot)$ is   bounded.} 
%\vspace{0.3cm}

\noindent
In view of Proposition \ref{prop:Shifted duality} (b) and Corollary \ref{cor:CQ duality} one might be inclined to think that using CCQ instead of the pointwise condition $\bar X\in \ri(\dom p)$ is excessively strong. 
However, computing the relative interior of $\dom p$ without CCQ is problematic, cf. the derivations in the proof of Theorem \ref{th:inf-proj psi} (c) under CCQ.   Hence, we do not 
consider constraint qualifications  weaker than CCQ.

We now turn our attention to subdifferentiation of $p$.

\begin{proposition}[Subdifferential of $p$]\label{prop:SD + PS} Let $p$ be given by \eqref{eq:p}. Then the following hold:

\begin{itemize}

\item[(a)]  Under SCCQ, $\dom p=\dom \p p=\Rnm$ and we have 
\begin{equation}\label{eq:SD p basic}
\p p(\bar X) = \argmax_{Y}\{ \ip{\bar X}{Y}-\inf_{(Y,T)\in \Omega(A,B)}h^*(-T)\},
\end{equation}
which is nonempty and compact.

\item[(b)] Under PCQ  equation \eqref{eq:SD p basic} holds,  and, for $\bar X\in \dom p$,  we have 
\begin{eqnarray*}
\p p(\bar X) &= & \set{\bar Y}{\exists \bar V:\; (\bar Y,0)\in \p \psi(\bar X,\bar V)}\\
& = & \set{\bar Y}{\exists \bar V:\; (\bar X,\bar V)\in \p \psi^*(\bar Y,0)}\\
&= & \set{\bar Y}{\exists \bar V: p(\bar X)=\psi(\bar X,\bar V)=\ip{\bar X}{\bar Y}-p^*(\bar Y)}.
\end{eqnarray*}

\item[(c)]  Under  PCQ and CCQ,  $\dom p=\dom \p p=\Rnm$ and we have 
\[
\p p(\bar X) = \set{Y}{\exists \bar V, \bar T: \; -\bar T\in \p h(\bar V),\; (Y,\bar T)\in \p \vphi(\bar X,\bar V)},
\]
which is nonempty and compact.
\end{itemize}

\end{proposition}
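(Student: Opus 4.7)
Plan: The three parts admit a unified strategy: transfer subgradients of the infimal projection $p$ to subgradients of the joint function $\psi$, using the always-true identity $p^*(\cdot) = \psi^*(\cdot, 0)$ (immediate from $p(X) = \inf_V \psi(X,V)$) as the bridge, then combine with the appropriate attainment or sum-rule result available under each CQ.

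For part (a), under SCCQ Theorem \ref{th:inf-proj psi}(c) already yields that $p$ is finite-valued on $\Rnm$ (hence $p\in\Gamma_0(\Rnm)$) and $p^* = \eta_0$. Since finite convex functions on $\Rnm$ have nonempty compact subdifferentials at every point (cf. \cite[Theorem 23.4]{RTR70}), we obtain $\dom p = \dom \p p = \Rnm$ with $\p p(\bar X)$ nonempty and compact. The Fenchel--Young characterization $\bar Y \in \p p(\bar X) \iff \bar Y\in \argmax_Y\{\ip{\bar X}{Y} - p^*(Y)\}$, combined with the explicit form $p^* = \eta_0$, produces \eqref{eq:SD p basic}.

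For part (b), Theorem \ref{th:p under PCQ} supplies $p\in\Gamma_0$, primal attainment ($\argmin_V \psi(\bar X,\cdot)\ne\emptyset$ for $\bar X\in\dom p$), and zero duality gap. For the first characterization, the forward direction picks $\bar V$ with $p(\bar X) = \psi(\bar X,\bar V)$ using primal attainment; then Fenchel--Young for $p$ combined with $p^*(\bar Y) = \psi^*(\bar Y, 0)$ gives $\psi(\bar X,\bar V) + \psi^*(\bar Y, 0) = \ip{(\bar X,\bar V)}{(\bar Y, 0)}$, i.e., $(\bar Y, 0)\in \p\psi(\bar X,\bar V)$. The reverse direction is direct: the subgradient inequality for $\psi$ at $(\bar X,\bar V)$ with zero $W$-component passes to $p$ after infimizing in $V$ and invoking $\psi(\bar X,\bar V) = p(\bar X)$. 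The second characterization is biconjugate symmetry ($\psi\in\Gamma_0$), and the third is the Fenchel--Young identity written out. Equation \eqref{eq:SD p basic} under PCQ follows from zero duality gap $p(\bar X) = -q_{\bar X}(0)$ combined with the explicit expression $-q_{\bar X}(0) = \sup_Y\{\ip{\bar X}{Y} - \eta_0(Y)\}$ computed via Lemma \ref{lem:PCQ and duality}.

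For part (c), PCQ$+$CCQ implies SCCQ by Corollary \ref{cor:P+C=S}, so part (a) already delivers $\dom\p p = \Rnm$ with nonempty compact $\p p(\bar X)$. The finer formula is obtained by invoking the exact sum rule provided by Corollary \ref{cor:Lem Subdiff psi}(b) under CCQ: $\p\psi(\bar X,\bar V) = \p\vphi(\bar X,\bar V) + (\{0\} \times \p h(\bar V))$. Decomposing $(\bar Y, 0)\in \p\psi(\bar X,\bar V)$ as $(\bar Y,\bar T) + (0, -\bar T)$ with $(\bar Y,\bar T)\in \p\vphi(\bar X,\bar V)$ and $-\bar T\in \p h(\bar V)$ gives the stated formula via the equivalence in part (b). The subtlest step is in (b): showing that $\argmax_Y\{\ip{\bar X}{Y} - \eta_0(Y)\} = \p p(\bar X)$ under PCQ alone, since in general only $p^* = \cl\eta_0 \le \eta_0$ is available. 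The resolution is that primal attainment together with zero duality gap forces $\eta_0(\bar Y) = p^*(\bar Y)$ at every $\bar Y\in \p p(\bar X)$, so the two $\argmax$ prescriptions (against $p^*$ versus against $\eta_0$) agree on the subdifferential.
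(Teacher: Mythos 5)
Your proof follows essentially the same route as the paper's: part (a) via finite-valuedness of $p$ under SCCQ together with Fenchel--Young and $p^*=\eta_0$; part (b) via primal attainment under PCQ to pass between $\p p(\bar X)$ and $(\bar Y,0)\in\p\psi(\bar X,\bar V)$, with the remaining identities from biconjugation; and part (c) by feeding the first identity of (b) into the CCQ sum rule of Corollary~\ref{cor:Lem Subdiff psi}. The one step you rightly flag as delicate --- dropping the closure on $\eta_0$ in \eqref{eq:SD p basic} under PCQ alone --- is treated no more explicitly in the paper (which simply asserts the closure can be dropped); note that your zero-duality-gap argument cleanly yields the inclusion $\argmax_Y\{\ip{\bar X}{Y}-\eta_0(Y)\}\subset\p p(\bar X)$, but the reverse inclusion still rests on the asserted equality $\eta_0(\bar Y)=p^*(\bar Y)$ for $\bar Y\in\p p(\bar X)$, i.e.\ on attainment in the infimum defining $\eta_0$, which neither you nor the paper actually verifies under PCQ alone.
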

\begin{proof}   (a) Under SCCQ,  $p$ is  convex and finite-valued (hence closed and proper), therefore  $\dom p=\dom \p p=\Rnm$ with $\p p(X)$ compact for all $X\in \Rnm$.
The representation \eqref{eq:SD p basic} 
follows from \cite[Theorem 23.5]{RTR70} and the fact that the closure  
for $p^*$ can be dropped in the argmax problem.  
\smallskip

\noindent  
(b) Under PCQ we also have that $p\in \Gamma_0$, hence the same reasoning as in (a) gives \eqref{eq:SD p basic}. We now prove the remainder: 
For the first  identity   notice that (see e.g.  \cite[Chapter D,  Corollary 4.5.3]{HUL 01})
\[
\p p(\bar X)=\set{Y}{(Y,0)\in \p \psi(\bar X,\bar V)} \quad (\bar V\in \argmin_{V} \psi(\bar X,V)),
\] 
the latter argmin set being nonempty due to what was argued above.  The '$\subset$'-inclusion is hence clear.  For the reverse inclusion invoke the results
in \cite[Example 10.12]{RoW98} to see that if $(Y,0) \in \psi(\bar X,\bar V)$ then $\bar V\in  \argmin_{V} \psi(\bar X,V)$. 

The second identity in (c) is clear from \cite[Theorem 23.5]{RTR70} as $\psi\in \Gamma_0(\bE)$.

The third  follows from Proposition \ref{prop:Shifted duality}  in combination with  Corollary \ref{cor:CQ duality} 
and recalling that  $\psi^*(\bar Y,0)=p^*(\bar Y)$.  
\smallskip

\noindent
(c)  Apply  Corollary  \ref{cor:Lem Subdiff psi} to the first representation in (b).  
\end{proof}

\noindent
For $\bar X\in \rbd (\dom p)$ the subdifferential $\p p(\bar X)$ can be empty.  Moreover, it is unbounded if   $\bar X\notin \inter(\dom p)$.  The latter may even occur under BPCQ as the following example shows.

%All of this  may occur even if BPCQ holds.  We refer to the following example for illustration of these cases. 

\begin{example}\label{ex:Empty Subdiff} 
Let $A=\left(\begin{smallmatrix}1 & 0 \\ 0& 0\end{smallmatrix}\right)$ and $b=\left(\begin{smallmatrix}1  \\ 0\end{smallmatrix}\right)$ so that 
\(
\cK_A=\set{\left(\begin{smallmatrix}v & w \\ w& u\end{smallmatrix}\right)}{u\geq 0}.
\)
Defining $h:=\delta_\cV$ for 
\(
\cV:=\set{\left(\begin{smallmatrix}v & 0 \\ 0&u\end{smallmatrix}\right)}{u\leq 0, v\in [0,1]}
\)
we hence find that
\(
\dom h\cap \cK_A=\set{\left(\begin{smallmatrix}v & 0 \\ 0& 0\end{smallmatrix}\right)}{v\in [0,1]}
\)
and
\(\dom h\cap \inter \cK_A=\emptyset,
\)
so that CCQ is violated but BPCQ (hence (S)PCQ) holds.  We find that 
\begin{eqnarray*}
x\in \dom p & \Longleftrightarrow & \exists\, V\in \cV\cap \cK_A:\; \left(\begin{smallmatrix} x\\ b\end{smallmatrix}\right)\in \rge \left(\begin{smallmatrix}V & A^T \\ A& 0\end{smallmatrix}\right)\\
&\Longleftrightarrow & \exists\, v\in [0,1], r,s\in \R^2:\;\begin{array}{rcl}
x& = & \left(\begin{smallmatrix} v & 0\\ 0 & 0\end{smallmatrix}\right)r +\left(\begin{smallmatrix} 1& 0\\ 0 & 0\end{smallmatrix}\right)s,\\
\left(\begin{smallmatrix} 1\\ 0\end{smallmatrix}\right) & = & \left(\begin{smallmatrix} 1& 0\\  0 & 0\end{smallmatrix}\right)r
\end{array}\\
& \Longleftrightarrow & \exists\, v\in [0,1], \rho, \sigma\in \R: x=  \left(\begin{smallmatrix} v & 0\\ 0 & 0\end{smallmatrix}\right)\left[ \left(\begin{smallmatrix}1\\ 0\end{smallmatrix}\right)+\rho  \left(\begin{smallmatrix} 1 \\ 0 \end{smallmatrix}\right)\right]+\sigma \left(\begin{smallmatrix} 1\\ 0 \end{smallmatrix}\right)\\
& \Longleftrightarrow & x\in \lin \{\left(\begin{smallmatrix} 1\\ 0 \end{smallmatrix}\right)\}.
\end{eqnarray*}
Therefore we have 
$
\dom p= \lin \{\left(\begin{smallmatrix} 1\\ 0 \end{smallmatrix}\right)\}.
$
In particular, $\dom p$ is a proper subspace of $\R^2$, hence  relatively open  with  empty interior. Therefore $\p p(x)$ is nonempty and  unbounded for any $x\in \dom p$.

\end{example}

\section{Infimal projection with a support function}\label{sec:Support Func} 
\noindent
We now study the case where $h$ is a support function: 
%Concretely, given a closed, convex set $\cV\subset \bS^n$,  we consider the function $p:\R^{n\times m}\to \rbar$ given by
%That is, 
\begin{equation}\label{eq:p Support Func Case}
p(X):=\inf_{V\in \bS^n} \vphi(X,V)+\sigma_{\cV}(V),
\end{equation}
where $\cV$ is a given closed, convex  subset of $\bS^n$.
%Observe that, for $\lam>0$,
%\[
%p(\lam X)=\inf_{V\in \bS^n} \vphi(\lam X,V)+\sigma_{\cV}(V)
%=\inf_{V\in \bS^n} \vphi(\lam (X, V))+\sigma_{\cV}(\lam V)
%=\lam p(X),
%\]
%and, for $X_1,X_2\in \Rnm$,
%\[
%p(X_1+X_2)=\inf_{V_1,V_2\in \bS^n} 
%\vphi((X_1,V_1)+(X_2,V_2))+\sigma_{\cV}(V_1+V_2)
%\le p(X_1)+p(X_2).
%\]
%Hence, if $p$ is proper, it is a proper support function.
%\noindent
%Recall that, by {\em H\"ormander's Theorem}, see e.g. \cite[Corollary 13.2.1]{RTR70}, this covers exactly the cases where $h$ is positively homogeneous (and closed, proper, convex). 
Our first task is to interpret the 
constraint qualifications of Section \ref{sec:CQ} when $h=\sig_\cV$.
Here, and for the remainder of this section, the choice 
$h=\sigma_{\cV}$ implies that $\dom h=\barr\cV$ and $\dom h^*=\cV$.

\begin{lemma}[Constraint qualifications for \eqref{eq:p Support Func Case}]\label{lem: CQ support} Let $p$ be given by \eqref{eq:p Support Func Case}.  Then the  following hold:
\begin{itemize}
\item [(a)] (CCQ) The conditions 
\begin{eqnarray}
\barrier \cV\cap\inter \cK_A\neq \emptyset, \label{eq:CQ support 1a}\\
\cV^\infty\cap (-\cK_A^\circ)=\{0\}, \label{eq:CQ support 1b}\\
\cl(\barrier \cV)-\cK_A  = \bS^n \label{eq:CQ support 1c}
\end{eqnarray}
are each equivalent to CCQ for $p$ in \eqref{eq:p Support Func Case}. 
Moreover, 
%under any of the conditions 
%\eqref{eq:CQ support 1a}-\eqref{eq:CQ support 1c}, 
if CCQ holds, then
SCCQ  holds if and only if 
\begin{equation}\label{eq:CQ support 1d}
%\begin{aligned}
\emptyset\ne
\Xi(A,B) 
%& := & \set{Y}{ \exists W\in \cV: (Y,-W)\in \Omega(A,B)}\\
%& = & 
%&=\set{Y}{AY=B,\; \frac{1}{2}YY^T\in \cV+\cK^\circ_A}
%\\
=\set{Y}{(Y,0)\in \Omega(A,B)+(\{0\}\times \cV)},
%\end{aligned}
%
%\exists\,Y\in\Rnm\ :\ 
%AY=B\mbox{ and }\half YY^T\in \cV+\cK_A^\circ,
\end{equation} 
where $\Xi(A,B)$ is defined in \eqref{eq:sccq}.

\item[(b)] (PCQ)  PCQ holds for $p$ if and only if 
\begin{equation}\label{eq:PCQ support}
\pos(\Omega_2(A,B)+\cV)= \lin (\Omega_2(A,B)+\cV),
\end{equation}
where $\Omega_2(A,B)$ is defined in \eqref{eq:C}.

\item[(c)]  (BPCQ) The conditions 
\begin{gather}
\barrier \cV\cap\cK_A\neq \emptyset\AND  \cl(\barrier \cV)\cap \cK_A = \{0\} \label{eq:CQ support 2a},\\
\barrier \cV\cap\cK_A  \; \text{is nonempty and bounded},
\label{eq:CQ support 2b}\\
	\barrier\cV\cap\cK_A\neq \emptyset  \AND  
	\cV^\infty +\cK_A^\circ =\bS^n
	\label{eq:CQ support 2c}
\end{gather}
are each equivalent to BPCQ for $p$, hence imply \eqref{eq:PCQ support}.
\end{itemize}

\end{lemma}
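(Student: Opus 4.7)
The plan is to translate each constraint qualification in Definition \ref{def:CQ} into the present setting via the identities
\[
\dom h=\barrier\cV,\quad h^*=\delta_\cV,\quad \dom h^*=\cV,\quad \hzn h^*=\cV^\infty,
\]
which are immediate from $h=\sigma_\cV$ together with the closedness and convexity of $\cV$.

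For part (a), the equivalence of CCQ with \eqref{eq:CQ support 1a} is the defining statement of CCQ combined with \eqref{eq:simple ccq}, and the equivalence with \eqref{eq:CQ support 1b} is Lemma \ref{lem:CCQ equiv}(iii) after substituting $\hzn h^*=\cV^\infty$. To derive \eqref{eq:CQ support 1c} I would pass to polars: using the rule $(K_1+K_2)^\circ=K_1^\circ\cap K_2^\circ$ for convex cones together with $(\barrier\cV)^\circ=\cV^\infty$ from \eqref{eq:BarrierPolarity} and $(-\cK_A)^\circ=-\cK_A^\circ$, one computes that the polar of the convex cone $\cl(\barrier\cV)-\cK_A$ equals $\cV^\infty\cap(-\cK_A^\circ)$. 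Thus \eqref{eq:CQ support 1b} is equivalent to this polar being $\{0\}$, which by the bipolar theorem is equivalent to the closure of $\cl(\barrier\cV)-\cK_A$ being all of $\bS^n$; since $\ri(\cl C)=\ri C$ for convex $C$ and $\ri\bS^n=\bS^n$, that closure coinciding with $\bS^n$ forces equality, giving \eqref{eq:CQ support 1c}. The SCCQ characterization \eqref{eq:CQ support 1d} is a direct rewriting of \eqref{eq:sccq} with $\dom h^*=\cV$.

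For part (b), the defining condition $0\in\ri(\Omega_2(A,B)+\cV)$ of PCQ is, by \eqref{eq:RintChar} applied at $x=0$, equivalent to $\pos(\Omega_2(A,B)+\cV)=\lin(\Omega_2(A,B)+\cV)$, which is \eqref{eq:PCQ support}. For part (c), condition \eqref{eq:CQ support 2b} is a direct transcription of Lemma \ref{lem:PCQ Equiv}(b). Since $\barrier\cV$ is a convex cone containing the origin, one verifies via \eqref{eq:horizon recession} that its horizon cone coincides with $\cl(\barrier\cV)$, so the definition of BPCQ becomes precisely \eqref{eq:CQ support 2a}. Condition \eqref{eq:CQ support 2c} then follows from \eqref{eq:CQ support 2a} by the same polar-duality argument used for \eqref{eq:CQ support 1c}: the polar of $\cV^\infty+\cK_A^\circ$ equals $\cl(\barrier\cV)\cap\cK_A$, and the triviality of this intersection is equivalent to the sum being all of $\bS^n$. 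The implication from BPCQ to \eqref{eq:PCQ support} is then supplied by Lemma \ref{lem:PCQ Implication}(a).

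The main obstacle is the bookkeeping in the polar-duality steps for \eqref{eq:CQ support 1c} and \eqref{eq:CQ support 2c}: one must carefully upgrade the conclusion $\cl C=\bS^n$ delivered by the bipolar theorem to the sharper $C=\bS^n$ for convex cones that are a priori not closed. This upgrade rests on the elementary identity $\ri C=\ri(\cl C)$, from which $\cl C=\bS^n$ forces $\ri C=\bS^n$ and hence $C=\bS^n$.
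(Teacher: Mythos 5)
Your proposal is correct and follows essentially the same route as the paper: parts (a)--(c) are obtained by substituting $\dom h=\barrier\cV$, $h^*=\delta_\cV$, $\hzn h^*=\cV^\infty$ into Lemma \ref{lem:CCQ equiv}, \eqref{eq:RintChar}, Lemma \ref{lem:PCQ Equiv}, and the definitions, with the same closure-dropping step via $\ri C=\ri(\cl C)$. The only cosmetic difference is that you derive the polarity equivalences for \eqref{eq:CQ support 1c} and \eqref{eq:CQ support 2c} directly from the bipolar theorem and the sum rule for polars, where the paper cites the corresponding exercise in Borwein and Lewis.
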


\begin{proof}   Observe that with $h=\sigma_\cV$ we have $\dom h=\barrier \cV$ and  $\hzn h^*=\cV^\infty$.
\smallskip

\noindent
(a)  \eqref{eq:CQ support 1a} is  condition (i) in Lemma \ref{lem:CCQ equiv} for $h=\sigma_\cV$, while \eqref{eq:CQ support 1b} is condition  (iii).  
Employing the results in \cite[Section 3.3, Exercise 16]{BoL 00}) we have  that
\eqref{eq:CQ support 1b} holds if and only if $\cl(\barr \cV-\cK_A)=\bS^n$.
The final statement follows from \eqref{eq:sccq} in the definition of SCCQ.
%This completes the proof  of a).
%\smallskip

\noindent
(b) This is an application of \eqref{eq:RintChar} and the definition of PCQ.
\smallskip

\noindent
(c) As the horizon cone of any cone is its closure, we  see that \eqref{eq:CQ support 2a} is exactly BPCQ  (for $h=\sigma_\cV$), while the equivalence to \eqref{eq:CQ support 2b} follows from Lemma \ref{lem:PCQ Equiv} (b). The equivalence of \eqref{eq:CQ support 2c} to the former follows from the fact that 
\eqref{eq:CQ support 2a} holds if and only if
\(\cl(\cV^\infty+\cK_A^\circ)=\bS^n,
\)
see \cite[Section 3.3, Exercise 16]{BoL 00}), where the closure can be dropped by interpreting \cite[Theorem 6.3]{RTR70} accordingly.
%This concludes the proof.
\end{proof}

\noindent
The additivity of support functions tells us that
%see \eqref{eq:Support Additivity}, we find that 
%we have
\begin{equation}
p(X)=\inf_{V\in\bS^n} \sigma_{\Sigma}(X,V)\quad (X\in \R^{n\times m}),
\end{equation}
where 
\begin{equation}\label{eq:Sigma}
\Sigma:=  %\Sigma(A,B,\cV):=
\Omega(A,B)+\{0\}\times \cV\subset \bE.
\end{equation}
In particular, this implies that $p(\lam X)=\lam p(X)$, for all $\lam >0$ and
$p(X_1+X_2)\le p(X_1)+p(X_2)$. Hence, if $p$ is proper, it
is a support function.
In addition, by \eqref{eq:sccq},
\(
\Xi(A,B)=\set{Y}{(Y,0)\in \Sigma}
\)
is the set featured in \eqref{eq:sccq}, \eqref{eq:Xi}, and 
\eqref{eq:CQ support 1d}.

\begin{proposition}\label{prop:h support simple} Let $p$ be given by  \eqref{eq:p Support Func Case}. Then the following hold: 
\begin{itemize}
\item[(a)] $p\in \Gamma_0(\Rnm)$ (i.e. $p=p^{**}$)   under  condition \eqref{eq:PCQ support}, and, hence, under any of the conditions 
\eqref{eq:CQ support 2a}-\eqref{eq:CQ support 2c}.  Moreover, this is also true  under any condition in \eqref{eq:CQ support 1a}-\eqref{eq:CQ support 1c} if, in addition,  \eqref{eq:CQ support 1d} 
or \eqref{eq:PCQ support} holds, in which case $p$ is finite-valued. 

\item[(b)] $p^*=\delta_{\cl \Sigma}(\cdot,0)$ where the closure is superfluous (i.e. $\Sigma$ is closed) under any of the conditions \eqref{eq:CQ support 1a}-\eqref{eq:CQ support 1c}, 
in which case  $p^*=\delta_{\Xi(A,B)}$.

\item[(c)] If any of  \eqref{eq:CQ support 1a}-\eqref{eq:CQ support 1c} hold then $p\equiv -\infty$ or $p=\sigma_{\Xi(A,B)}$ is finite-valued. The latter is the case if and only if \eqref{eq:CQ support 1d} holds, which is valid under  \eqref{eq:PCQ support}.

\end{itemize}

\end{proposition}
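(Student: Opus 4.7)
The plan is to exploit the fact that with $h=\sigma_\cV$ the composite $\psi$ is itself a support function, then read off each claim from the preceding machinery. Concretely, by additivity of support functions under Minkowski sum, $\psi(X,V)=\sigma_{\Omega(A,B)}(X,V)+\sigma_{\{0\}\times\cV}(X,V)=\sigma_\Sigma(X,V)$ with $\Sigma$ defined in \eqref{eq:Sigma}. This single identity together with the inf-projection formula $p^*(\cdot)=\psi^*(\cdot,0)$ will drive everything.

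For (a), I would cite the equivalences in Lemma~\ref{lem: CQ support}: each of \eqref{eq:CQ support 2a}--\eqref{eq:CQ support 2c} is equivalent to BPCQ, under which Lemma~\ref{lem:p}(e) gives $p\in\Gamma_0(\Rnm)$ directly; and \eqref{eq:PCQ support} is equivalent to PCQ by Lemma~\ref{lem: CQ support}(b), so Theorem~\ref{th:p under PCQ}(a) delivers $p\in\Gamma_0$. For the ``moreover'' assertion, conditions \eqref{eq:CQ support 1a}--\eqref{eq:CQ support 1c} are equivalent to CCQ via Lemma~\ref{lem: CQ support}(a); Theorem~\ref{th:inf-proj psi}(c.II) then forces the dichotomy $p\in\Gamma_0$ (finite-valued) vs.\ $p\equiv-\infty$, with the finite-valued alternative characterized by SCCQ. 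SCCQ is either the added hypothesis \eqref{eq:CQ support 1d} or, in the PCQ+CCQ case, a consequence via Corollary~\ref{cor:P+C=S}(a).

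For (b), since $\psi=\sigma_\Sigma$ and $\Sigma$ is convex, its conjugate is $\psi^*=\delta_{\cl\Sigma}$, and applying $p^*(Y)=\psi^*(Y,0)$ (see \cite[Theorem 11.23]{RoW98}) yields $p^*=\delta_{\cl\Sigma}(\cdot,0)$ unconditionally. To dispense with the closure under CCQ, I would invoke the standard closedness criterion for a sum of closed convex sets: $\Sigma=\Omega(A,B)+(\{0\}\times\cV)$ is closed provided $-\Omega(A,B)^\infty\cap(\{0\}\times\cV)^\infty=\{0\}$. Using $\Omega(A,B)^\infty=\{0\}\times\cK_A^\circ$ (from \cite[Proposition 10]{BGH17}, cf.\ Lemma~\ref{lem:C}(a)), this reduces to $(-\cK_A^\circ)\cap\cV^\infty=\{0\}$, exactly \eqref{eq:CQ support 1b}. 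Hence $\cl\Sigma=\Sigma$ under CCQ, and $p^*=\delta_\Sigma(\cdot,0)=\delta_{\Xi(A,B)}$.

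For (c), biconjugation on the result of (b) gives $p^{**}=\sigma_{\Xi(A,B)}$ if $\Xi(A,B)\ne\emptyset$ and $p^{**}\equiv-\infty$ otherwise. Under CCQ, Theorem~\ref{th:inf-proj psi}(c.II) guarantees $p=p^{**}$ since $p$ already lies in $\Gamma_0\cup\{-\infty\}$, producing the stated dichotomy; the finite-valued branch holds iff $\Xi(A,B)\ne\emptyset$, i.e.\ \eqref{eq:CQ support 1d}. That \eqref{eq:PCQ support} forces this branch is Corollary~\ref{cor:P+C=S}(a) again. The only genuinely nontrivial step is the closedness of $\Sigma$ in part (b); the rest is bookkeeping that threads the previously-established equivalences through the structural results of Section~\ref{sec:Main}.
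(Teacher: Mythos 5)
Your proposal is correct and follows essentially the same route as the paper: you identify $\psi=\sigma_\Sigma$ with $\Sigma=\Omega(A,B)+(\{0\}\times\cV)$, obtain $p^*=\psi^*(\cdot,0)=\delta_{\cl\Sigma}(\cdot,0)$, establish closedness of $\Sigma$ via the recession-cone criterion $(-\cK_A^\circ)\cap\cV^\infty=\{0\}$ (i.e.\ \eqref{eq:CQ support 1b}), and thread parts (a) and (c) through Lemma~\ref{lem: CQ support}, Theorem~\ref{th:p under PCQ}, Theorem~\ref{th:inf-proj psi}(c), and Corollary~\ref{cor:P+C=S}, exactly as the paper does. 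The only cosmetic difference is that for the BPCQ cases you invoke Lemma~\ref{lem:p}(e) (which as stated yields only $p\in\Gamma$), but this is harmless since you also route these cases through BPCQ $\Rightarrow$ PCQ and Theorem~\ref{th:p under PCQ}(a), which is the paper's argument.
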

\begin{proof} (a) The first statement follows from Lemma \ref{lem: CQ support} and Theorem \ref{th:p under PCQ}. The second uses Lemma \ref{lem: CQ support},  Theorem \ref{th:inf-proj psi} (c)  and Corollary \ref{cor:P+C=S}.
\smallskip

\noindent
(b) By \cite[Exercise 3.12]{RoW98} and \cite[Proposition 10]{BGH17}, $\Sigma$ is closed if $(-\cK_A^\circ)\cap \cV^\infty=\{0\}$, i.e. under any condition in \eqref{eq:CQ support 1a}-\eqref{eq:CQ support 1c}, see Lemma \ref{lem: CQ support} (a). Moreover, $p^*=\sigma_{\Sigma}^*(\cdot,0)=\delta_{\cl \Sigma}(\cdot, 0)$, see \cite[Proposition 11.23 (c)]{RoW98}.
\smallskip

\noindent
(c) Follows from (a),  (b) and Theorem \ref{th:inf-proj psi} c II), as well as Corollary \ref{cor:P+C=S}.
\end{proof}

%\noindent
%We now compute refined representations  for the conjugate  of $p$ given by \eqref{eq:p Support Func Case}.
%
%
%\begin{corollary}\label{cor:h is support}
%Consider the function $p$ from \eqref{eq:p Support Func Case} with  $\cV\subset \bS^n$  nonempty, closed  and convex. Under any condition \eqref{eq:CQ support 1a}-\eqref{eq:CQ support 1c}  we have 
%\(
%p^*=\delta_{\Xi(A,B)},
%\)
%where $\Xi(A,B)$ is given in \eqref{eq:CQ support 1d}. If, in addition,  \eqref{eq:CQ support 1d} holds, then  
%\(p=\sig_{\Xi(A,B)}\).
%%\begin{eqnarray*} 
%%\Xi(A,B) & := & \set{Y}{ \exists W\in \cV: (Y,-W)\in \Omega(A,B)}\\
%%& = & \set{Y}{AY=B,\; \frac{1}{2}YY^T\in \cV+\cK^\circ_A}.
%%\end{eqnarray*}
%%In particular, we have  $p=\sigma_{\Xi(A,B)}$ which is finite-valued.
%\end{corollary}
%
%\begin{proof}  By Theorem \ref{th:inf-proj psi} c)  and Lemma \ref{lem: CQ support}, we find that 
%\[
%p^*(Y)=\inf_{(Y,-W)\in \Omega(A,B)} \delta_{\cV}(W)=
%%\left\{\begin{array}{rcl}
%\begin{cases}
%0 &, \ \exists W\in \cV: (Y,-W)\in \Omega(A,B),\\
%+\infty &,\ \text{else},
%\end{cases}
%%\end{array}\right.
%\]
%so that $p^*=\delta_\Xi(A,B)$.  The representation for $p$ follows from Proposition \ref{prop:h support simple} a).
%\end{proof}

\subsection{The case $B=0$}
We now consider the case when $B=0$. Recall from 
\cite[Theorem 11]{BGH17} that this implies that 
$\sig_{\Omega(A,0)}$ is a gauge function. Similarly, if $0\in \cV$, then
$\sig_\cV$ is also a gauge, in fact, $\sig_\cV=\gamma_{\cV^\circ}$,  cf.  \cite[ Example 11.19]{RoW98}.

 This combination of assumptions has interesting consequences when the geometries of the sets $\cV$ and 
$-\cK_A^\circ$ are compatible in the following sense. 

\begin{definition}[Cone compatible gauges]
Given a closed, convex cone $K\subset\cE$, we define an ordering
on $\cE$ by  $x\preceq_Ky$ if and only if $y-x\in K$. 
A gauge $\gamma$ on
$\cE$ is said to be compatible with this ordering if 
\\
\centerline{\(
\gamma(x)\le\gamma(y)\ \mbox{ whenever }\  0\preceq_K x\preceq_K y.
\)
}
\end{definition}

\noindent
The following lemma provides a  characterization of cone compatible gauges and provides a very useful tool for determining is a gauge is
compatible with a given cone.

\begin{lemma}[Cones and compatible gauges]\label{lem:cones and gauges}
Let $0\in C\subset \cE$ be a  closed, convex set, and
let $K\subset\cE$ be a closed, convex cone. 
Then $\gamma_C$ is compatible with the ordering $\preceq_K$
if and only if  
\( %\begin{equation}\label{eq:cone compatible condition}
K\cap(y-K)\subset C\quad (y\in K\cap C). 
\) %\end{equation}
\end{lemma}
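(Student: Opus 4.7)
The plan is to prove the equivalence by two direct and almost symmetric arguments, both exploiting the identity $\gamma_C(z)\le 1\iff z\in C$ that holds since $C$ is closed, convex, and contains $0$. The key observation is that the set $K\cap(y-K)$ is exactly the ``order interval'' $\{x : 0\preceq_K x\preceq_K y\}$, so the containment in the lemma is a statement about sublevel sets of $\gamma_C$ at height $1$.

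For the forward direction, I would let $y\in K\cap C$ and pick $x\in K\cap(y-K)$. By unpacking the definition, $x\in K$ and $y-x\in K$, which is exactly $0\preceq_K x\preceq_K y$. From compatibility one has $\gamma_C(x)\le\gamma_C(y)$; and because $y\in C$ with $0\in C$ and $C$ closed, $\gamma_C(y)\le 1$. Hence $\gamma_C(x)\le 1$, which by the closedness of $C$ (and $0\in C$) yields $x\in C$, as needed.

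For the converse, I would fix $0\preceq_K x\preceq_K y$ (which in particular forces $y\in K$ since $y=x+(y-x)\in K+K\subset K$) and aim to show $\gamma_C(x)\le\gamma_C(y)$. The case $\gamma_C(y)=+\infty$ is trivial, so assume $\gamma_C(y)<+\infty$ and take any $t>\gamma_C(y)$, so that $y/t\in C$. Because $K$ is a cone, $y/t\in K$ and $x/t\preceq_K y/t$, placing $x/t\in K\cap (y/t-K)$. Applying the hypothesis to the ``scaled'' reference point $y/t\in K\cap C$ yields $x/t\in C$, hence $\gamma_C(x)\le t$. Letting $t\downarrow\gamma_C(y)$ gives the desired inequality.

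I do not foresee a real obstacle here; the only subtlety is the use of $\gamma_C(z)\le 1\Longleftrightarrow z\in C$, which requires $C$ to be closed with $0\in C$ (both given), and the scaling step in the converse direction that converts the single-height statement ``$K\cap(y-K)\subset C$ for $y\in K\cap C$'' into an inequality comparing gauge values at arbitrary heights. Once one recognizes that the condition in the lemma is precisely the unit-sublevel-set version of compatibility and that the cone structure of $K$ allows free rescaling, both implications fall out immediately.
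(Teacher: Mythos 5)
Your proof is correct and follows essentially the same route as the paper's: both directions rest on identifying $K\cap(y-K)$ with the order interval and on the equivalence $z\in C\iff\gamma_C(z)\le 1$, with the converse handled by rescaling $y$ into $C\cap K$. The only difference is cosmetic: by taking $t>\gamma_C(y)$ and letting $t\downarrow\gamma_C(y)$ you avoid the paper's separate treatment of the case $\gamma_C(y)=0$ (which it handles via the horizon cone $C^\infty$), a minor but clean uniformization.
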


\begin{proof}
Note that, for $y\in K$, we have
\(
K\cap(y-K)=\set{x}{0\preceq_K x\preceq_K y}.
\)
Suppose that $\gamma_C$ is compatible with $K$, and let
$y\in C\cap K$. If $x\in K\cap(y-K)$, then
$\gamma_C(x)\le \gamma_C(y)\le 1$, and, consequently,  $K\cap(y-K)\subset C$.

Next suppose $K\cap(y-K)\subset C$ for all $y\in K\cap C$, 
and let $x,y\in \cE$
be such that $0\preceq_K x\preceq_K y$. Then, $y\in K$ and
$x\in K\cap(y-K)$.
We need to show that $\gamma_C(x)\le \gamma_C(y)$.
If $\gamma_C(y)=+\infty$, this is trivially the case, so we may as well
assume that $\gamma_C(y)=:\bt<+\infty$. If $\bt>0$, then $\bt^{-1}y\in C\cap K$ and
$\bt^{-1}x\in K\cap(\bt^{-1}y-K)\subset C$. Hence,
$\gamma_C(\bt^{-1}y)=1\ge \gamma_C(\bt^{-1}x)$, and so, 
$\gamma_C(x)\le \gamma_C(y)$ as desired. In turn, if $\bt=0$,
then $ty\in K\cap C\;(t>0)$, so that $tx\in K\cap(ty-K)\subset C\;(t>0)$,
i.e., $x\in C^\infty$ and so $\gamma_C(x)=0$.
\end{proof}

\begin{corollary}[Infimal projection with a gauge function]\label{cor:h is gauge} Let $p$ be given by \eqref{eq:p Support Func Case} where 
$\cV$ is a nonempty, closed, convex subset of $\Sn$. Suppose that $B=0$. Under  any of the conditions \eqref{eq:CQ support 1a}-\eqref{eq:CQ support 1c} we have:

%\begin{itemize}
%\item[(a)] %\begin{equation}\label{eq:p conj for h support and B=0}
\noindent
(a)
\(
p^*= \delta_{\Xi(A,0)},
\)
where
\(
\Xi(A,0)=\set{Y}{AY=0,\;\exists W\in \cV:\; AW=0,\;\half YY^T\preceq W}.
\)
%\end{equation}

\noindent
(b)
%\item[(b)] 
If $0\in\cV$ and $\gamma_{\cV}$ is compatible with the ordering induced by  $-\cK_A^\circ$, 
%and $0\in \cK_A^\circ + \cV$ ,
then 
\begin{equation}\label{eq:p conj for h gauge and B=0}
\begin{aligned}
p^*(Y)
%&
=\indicator{Y}{\set{Y}{AY=0,\;\gauge{\half YY^T}{\cV}\le 1}}
%\\
%&
=\indicator {\half YY^T}{ (-\cK_A^\circ)\cap \cV}.
\end{aligned}
\end{equation}
%\end{itemize} 
\end{corollary}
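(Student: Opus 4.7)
The plan is to invoke Proposition \ref{prop:h support simple}(b), which under any of \eqref{eq:CQ support 1a}--\eqref{eq:CQ support 1c} gives $p^* = \indicator{\cdot}{\Xi(A,0)}$ (with $\Sigma$ already closed so no closure is needed), and then simply unfold what membership in $\Xi(A,0)$ means when $B=0$.

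For part (a), I would start from the defining condition $(Y,0) \in \Omega(A,0) + (\{0\}\times\cV)$, which requires the existence of $W\in\cV$ such that $(Y,-W)\in\Omega(A,0)$. By Theorem \ref{th:clco D}, this is equivalent to $AY=0$ together with $\half YY^T - W \in \cK_A^\circ$, which rewrites as $\half YY^T \preceq_{-\cK_A^\circ} W$. The asserted side condition $AW=0$ is then automatic: Proposition \ref{prop:K_A}(b) identifies $\cK_A^\circ$ with the set of matrices $W$ satisfying $W = PWP$, for which $AW = 0$, while $A\cdot \half YY^T = \half (AY)Y^T = 0$; adding these gives $AW=0$.

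For part (b), I first note that $AY=0$ forces $Y=PY$ and hence $\half YY^T = P\,\half YY^T P \succeq 0$, so $\half YY^T \in -\cK_A^\circ$, i.e., $0 \preceq_{-\cK_A^\circ} \half YY^T$. Combined with part (a), membership $Y \in \Xi(A,0)$ is equivalent to $AY=0$ together with the existence of $W\in\cV$ satisfying $0\preceq_{-\cK_A^\circ}\half YY^T \preceq_{-\cK_A^\circ} W$. The compatibility of $\gauge{\cdot}{\cV}$ with $\preceq_{-\cK_A^\circ}$ then yields $\gauge{\half YY^T}{\cV}\le \gauge{W}{\cV}\le 1$, since $W\in\cV$. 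Conversely, if $\gauge{\half YY^T}{\cV}\le 1$, the assumption $0\in\cV$ with $\cV$ closed convex gives $\half YY^T \in \cV$, so one can take $W:=\half YY^T$, which trivially satisfies $\half YY^T \preceq_{-\cK_A^\circ} W$. This establishes the first equality in \eqref{eq:p conj for h gauge and B=0}. The final equality involving $\indicator{\half YY^T}{(-\cK_A^\circ)\cap\cV}$ then follows by noting, via the trace identity $\tr((I-P)YY^T) = \|(I-P)Y\|_F^2$, that $\half YY^T \in -\cK_A^\circ$ (equivalently, $\half YY^T = P\, \half YY^T P$) is in fact equivalent to $AY=0$, and that $\gauge{\half YY^T}{\cV}\le 1$ is equivalent to $\half YY^T \in \cV$.

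I expect the only real subtlety to be keeping straight the direction of the ordering $\preceq_{-\cK_A^\circ}$ and the bookkeeping linking $\half YY^T \in -\cK_A^\circ$ to $AY=0$; everything else is a direct application of the definitions, the standard gauge identity $\gauge{x}{\cV}\le 1\iff x\in\cV$ for closed convex $\cV$ with $0\in\cV$, and the compatibility hypothesis.
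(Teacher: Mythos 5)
Your proposal is correct and follows essentially the same route as the paper: part (a) is read off from Proposition~\ref{prop:h support simple}(b) and the description of $\cK_A^\circ$ in Proposition~\ref{prop:K_A}(b), and part (b) uses exactly the paper's argument — the compatibility hypothesis gives $\gauge{\half YY^T}{\cV}\le\gauge{W}{\cV}\le 1$ in one direction, and the choice $W=\half YY^T$ (legitimate since $0\in\cV$ closed convex makes $\gauge{\cdot}{\cV}\le 1$ equivalent to membership in $\cV$) gives the converse. The only detail left implicit is the easy converse identification in (a) that $AW=0$ together with the Loewner inequality forces $\half YY^T-W\in\cK_A^\circ$, but this is at the same level of terseness as the paper's own proof.
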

\begin{proof} (a) This follows from  Proposition  \ref{prop:h support simple}, 
\eqref{eq:CQ support 1d} with $B=0$, and using the representation of $\cK_A$ in Proposition \ref{prop:K_A}.
\smallskip

\noindent
(b) First observe that $ -\cK_A^\circ= \set{W\in\Snp}{\rge W\subset \ker A}$, see  Proposition  \ref{prop:K_A} (b), recall that $\rge Y= \rge YY^T\;(Y\in \R^{n\times m})$  and, since $0\in\cV$, $V\in \cV$ if and only if $\gamma_\cV(V)\leq 1$. Exploiting these facts and the compatibility hypothesis, we see that
\begin{eqnarray*}
Y\in \Xi(A,0)
&\Longleftrightarrow \lefteqn{AY=0, \;\exists W\in \cV: AW=0, \half YY^T\preceq W}\\ 
& \Longrightarrow & AY=0, \;\exists W\in\cV: \; \gamma_\cV(W)\geq \gamma_\cV\left(\frac{1}{2}YY^T\right)\\
& \Longleftrightarrow & AY=0,\;\gamma_\cV\left(\frac{1}{2}YY^T\right)\leq 1\\
& \Longleftrightarrow &  AY=0, \;\frac{1}{2}YY^T\in \cV\\
& \Longleftrightarrow &  \rge YY^T \subset \ker A, \;\frac{1}{2}YY^T\in \cV\\
& \Longleftrightarrow & \frac{1}{2} YY^T\in (-\cK_A^\circ)\cap \cV.
\end{eqnarray*}
Conversely,  we have
\(
\frac{1}{2} YY^T\in (-\cK_A^\circ)\cap \cV
  \Longleftrightarrow  AY=0, \ Y\in \cK_A, \mbox{ and } \frac{1}{2} YY^T\in \cV.
\)
Taking $W=\frac{1}{2} YY^T$, we see that $Y\in \Xi(A,0)$.
Therefore  (b) follows from (a). 
\end{proof}

\noindent
When the support function $h$ is taken to be a linear functional, 
we obtain the following remarkable result.
% as a consequence of our more general analysis above. 
Here $\|\cdot\|_*$ denotes the nuclear norm\footnote{For a matrix $T$ the nuclear norm  $\|T\|_*$ is the sum of its singular values.}.

\begin{corollary}[$h$ linear]\label{cor:h linear} Let $p:\R^{n\times m}\to \rbar$ be defined by
\[
p(X)=\inf_{V\in\bS^n} %\sigma_{\Omega(A,0)}(X,V)
\varphi(X,V)+\ip{\bar U}{V}
\]
for some $\bar U\in\bS^n_{+}\cap \Ker_n A$ and set
$
C(\bar U):=\set{Y\in\Rnm}{\frac{1}{2}YY^T\preceq \bar U}.
$
 Then:
\begin{itemize}
\item[(a)]  $p^*=\delta_{C(\bar U)}$ is closed, proper, convex. 

\item[(b)]  $p=\sigma_{C(\bar U)}=
\gamma_{C(\bar U)^\circ}$  is sublinear, finite-valued,  nonnegative and symmetric (i.e.~a seminorm).

\item[(c)] If $\bar U\succ 0$   with $2\bar U=LL^T$ ($L\in\R^{n\times n}$) and  $A=0$ then 
\(
p=\sigma_{C(\bar U)}=\|L^T(\cdot)\|_*,
\) 
i.e. $p$ is a norm with $C(\bar U)^\circ$ as its unit ball and $\gamma_{C(\bar U)}$ as its dual norm.

\item[(d)] If $\bar U\succ 0$,  then $C(\bar U)$ and 
$C(\bar U)^{\circ}$ are compact, convex, 
symmetric\footnote{We say the set $S\subset\cE$ symmetric if $S=-S$.} with $0$ in their interior, thus $\pos C(\bar U)=\pos C(\bar U)^\circ=\bS^n.$

%\item[e)] If  $A=0$ and $\bar U=P$ where $P$ is an orthogonal projection, then
 
\end{itemize}

%In particular, we have 
%\[
%\|X\|_*= \inf_{V\in\bS^n} \sigma_{\cD}(X,V)+\half \tr(V)\quad (X\in\R^{n\times m}).
%\]
\end{corollary}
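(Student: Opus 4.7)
The plan is to recognize that $h(V)=\ip{\bar U}{V}$ is precisely the support function $\sigma_{\{\bar U\}}$, so Corollary~\ref{cor:h is gauge} applies with $\cV=\{\bar U\}$. First I would verify CCQ: because $h$ is finite-valued, $\barr\cV=\dom h=\Sn$, and $\inter\cK_A\ne\emptyset$ in general, so condition \eqref{eq:CQ support 1a} holds trivially. Corollary~\ref{cor:h is gauge}(a) then yields $p^{*}=\delta_{\Xi(A,0)}$ with
\[
\Xi(A,0)=\set{Y\in\Rnm}{AY=0,\;\exists\,W\in\{\bar U\}:\;AW=0,\;\tfrac{1}{2}YY^T\preceq W}.
\]
Since $A\bar U=0$ is part of the hypothesis, this set reduces to $\set{Y}{AY=0,\,\tfrac{1}{2}YY^T\preceq\bar U}$. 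The key simplification is to eliminate the constraint $AY=0$: PSD monotonicity, together with $\rge\bar U\subset\ker A$, implies $\rge YY^T\subset\rge\bar U\subset\ker A$, and since $\rge Y=\rge YY^T$ one gets $AY=0$ automatically. This identifies $\Xi(A,0)=C(\bar U)$ and proves (a); closedness, convexity and properness of $p^{*}$ are automatic from its being a Fenchel conjugate of a proper function.

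For (b), I would note that $0\in C(\bar U)$ gives $\dom p^{*}\ne\emptyset$, so SCCQ is in force. Theorem~\ref{th:inf-proj psi}(c.II) (or Proposition~\ref{prop:h support simple}(c)) then gives $p\in\Gamma_0(\Rnm)$ and, by biconjugation, $p=p^{**}=\sigma_{C(\bar U)}$. The trace bound $\tr(YY^T)\le 2\tr\bar U$ on $C(\bar U)$ yields boundedness, so $\sigma_{C(\bar U)}$ is finite everywhere. Sublinearity is automatic for a support function; nonnegativity comes from $0\in C(\bar U)$; symmetry follows from the invariance $Y\in C(\bar U)\iff -Y\in C(\bar U)$. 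The identity $\sigma_{C(\bar U)}=\gamma_{C(\bar U)^\circ}$ is the standard support/gauge duality for a closed convex set containing the origin.

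For (c), I would exploit the invertibility of $L$ (since $\bar U\succ 0$) together with $A=0$: the substitution $Z=L^{-1}Y$ turns $YY^T\preceq LL^T$ into $ZZ^T\preceq I$, i.e., $\|Z\|_{\mathrm{op}}\le 1$, so $C(\bar U)=L\cdot B_{\mathrm{op}}$ where $B_{\mathrm{op}}$ is the operator-norm unit ball in $\Rnm$. Then
\[
\sigma_{C(\bar U)}(X)=\sup_{\|Z\|_{\mathrm{op}}\le 1}\ip{LZ}{X}=\sup_{\|Z\|_{\mathrm{op}}\le 1}\ip{Z}{L^{T}X}=\|L^{T}X\|_{*}
\]
by operator/nuclear-norm duality, and $p$ is a norm because $L$ is invertible. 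Part (d) then follows because $\bar U\succ 0$ combined with $\rge\bar U\subset\ker A$ forces $A=0$, so $C(\bar U)=L\cdot B_{\mathrm{op}}$ is the invertible linear image of a compact, convex, symmetric set with $0$ in its interior; $C(\bar U)^\circ=L^{-T}\cdot B_{*}$ inherits those same four properties, and $\pos C(\bar U)=\pos C(\bar U)^\circ=\Rnm$ is immediate from $0$ lying in their interiors.

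The main obstacle will be the simplification in~(a): identifying $\Xi(A,0)=C(\bar U)$ requires carefully leveraging $\bar U\in\Snp\cap\Ker_n A$ together with PSD monotonicity to absorb both the range condition hidden in $-\cK_A^\circ$ and the linear constraint $AY=0$. The nuclear-norm identification in~(c) is the other nontrivial ingredient, though it amounts to a standard change of variables once $A=0$ is in force.
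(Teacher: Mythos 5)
Your proposal is correct and follows essentially the same route as the paper: recognize $h=\sigma_{\{\bar U\}}$, note CCQ holds trivially since $\{\bar U\}$ is bounded, identify $\Xi(A,0)=C(\bar U)$ by showing $AY=0$ is automatic for $Y\in C(\bar U)$ (your PSD-monotonicity/range-inclusion argument is equivalent to the paper's contradiction with a vector $z\in(\ker A)^\perp$), and for (c) reduce to the spectral-ball case via the substitution $Z=L^{-1}Y$. The only cosmetic differences are that you invoke Corollary~\ref{cor:h is gauge}(a) rather than Proposition~\ref{prop:h support simple}(b) directly, and you argue (d) by hand where the paper cites \cite[Theorem 15.2]{RTR70}.
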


\begin{proof} (a)  Observe that $h:=\ip{\bar U}{\cdot}=\sigma_{\{\bar U\}}$. Hence the machinery from above  applies with $\cV=\{\bar U\}$. As $\cV$ is bounded, CCQ   is trivially satisfied 
(cf.~\eqref{eq:CQ support 1a}-\eqref{eq:CQ support 1c}).
Note that $0\in C(\bar U)\ne \emptyset$. Given $Y\in C(\bar U)$,
we must have $\rge Y\subset \ker A$ since otherwise there is a nonzero
$z\in (\ker A)^\perp$ with $Y^Tz\ne 0$ yielding $
0< \tnorm{Y^Tz}^2\le 2 z^T\bar Uz=0.
$
Consequently,
$
C(\bar U)=\set{Y\in\Rnm}{AY=0,\ \half YY^T-\bar U\in\cK_A^\circ}
=\Xi(A,0)\ne\emptyset,
$
and the result follows from Proposition \ref{prop:h support simple} (b).
\smallskip

\noindent
(b) This follows from \cite[Theorem 14.5]{RTR70}, part (a), and the fact that $0\in C(\bar U)$.
\smallskip

%\noindent
%b) We have 
%\begin{eqnarray*}
%p & = & p^{**}\\
%& = & \sigma_{C(\bar U)\cap \Ker_n A}\\
%& = & \gamma_{(C(\bar U)\cap \Ker_n A)^\circ}\\
%& = & \gamma_{\cl(C(\bar U)^\circ+\Rge_n A^T)}\\
%& = & \gamma_{C(\bar U)^\circ+\Rge_n A^T}.
%\end{eqnarray*}
%As CCQ holds, the first identity is due to Proposition \ref{prop:h support simple}. The second  uses a), the third follows from \cite[Theorem 14.5]{RTR70}. The sublinearity of $p$ is clear. The finite-valuedness follows from Proposition \ref{prop:h support simple}.  Since $0\in C(\bar U)$ the  nonnegativity follows as well, and the symmetry is due to the symmetry of $C(\bar U)$. 

\noindent
(c) Consider the case $\bar U=\frac{1}{2}I$: By part (a),
we have  $p^*=\delta_{\set{Y}{YY^T\preceq I}}$.   Observe that
$\set{Y}{YY^T\preceq I}=\set{Y}{\|Y\|_2\leq 1}=:\bB_{\Lambda}
$ 
is the closed unit ball of the spectral norm. Therefore, $p=\sigma_{\bB_{\Lambda}}=\|\cdot\|_{\bB_{\Lambda}^\circ}=\|\cdot\|_*$.

To prove the general case suppose that  $2\bar U=LL^T$. Then it is clear that $C(\bar U)=\set{Y}{L^{-1}Y\in C(\frac{1}{2}I)}$, 
and therefore 
\begin{eqnarray*} 
p(X) & = & \sigma_{C(\bar U)} (X)\\
& = & \sup_{Y:L^{-1}Y\in C(\frac{1}{2} I)}\ip{Y}{X}\\
& = & \sup_{L^{-1}Y\in C(\frac{1}{2}I)}\ip{L^{-1}Y}{L^TX}\\
& = & \sigma_{C(\frac{1}{2}I)}(L^TX)\\
& = & \|L^TX\|_*.
\end{eqnarray*}
Here the first identity is due to part (b) (with $A=0$) and the last one follows from the special case considered at the start of the proof.

%With the continuous map  $F:Y\mapsto \bar U-\frac{1}{2}YY^T$, and  as $\bar U\succ 0$ by assumption,  we find that $0\in F^{-1}(\bS_{++}^n)\subset \inter C(\bar U)$. This, however, yields that $p=\sigma_{C(\bar U)}$ is definite, i.e. $p(X)=0$ (if and) only if $X=0$. In view of b), this proves that $p$ is a norm, cf. \cite[p.~131]{RTR70}.

\item[(d)] Follows from (c) using \cite[Theorem 15.2]{RTR70}.
\end{proof}

\noindent
We point out that  Corollary \ref{cor:h linear}  generalizes the nuclear norm smoothing result by Hsieh and Olsen \cite[Lemma 1]{HsO 14} and complements  \cite[Theorem 5.7]{BuH15}

%\noindent
%An interesting special case  of Corollary \ref{cor:h linear} is the following, which generalizes \cite{}.
%
%\begin{corollary}[Scaled nuclear norm smoothing]\label{cor:Nuclear norm} Let $p$ be defined 
%\[
%p(X)=\inf_{V\in \bS^n} \sigma_{\Omega}(X,V)+\frac{1}{2}\tr(V).
%\]
%Then  $p=\|\cdot\|_*$  and $p^*=\delta_{\bB_\infty}$
%
%\end{corollary}
%
%\begin{proof} By Corollary \ref{cor:h linear} we have  $p^*=\delta_{\set{Y}{YY^T\leq I}}$.   But, clearly, we have 
%\[\set{Y}{YY^T\leq I}=\set{Y}{\|Y\|_\infty\leq 1}.
%\] 
%Therefore, $p=\sigma_{\bB_\infty}=\|\cdot\|^\circ_{\infty}=\|\cdot\|_*$, cf. \cite{}.
%
%\end{proof}

\section{$h$ is an indicator function}\label{sec:Indicator}
\noindent
We now suppose that the function $h$ in \eqref{eq:psi} is the indicator
$h:=\del_\cV$ for some nonempty, closed, and  convex set $\cV\in\bS^n$:
%, i.e., the infimal projection
%$p:\R^{n\times m}\to\rbar$ is given by
\begin{equation}\label{eq:p indicator}
p(X) = \displaystyle \inf_{V\in\bS^n} %\sigma_{ \Omega (A,B)} (X,V) 
\varphi(X,V)+ \delta_\mathcal{V}(V).
\end{equation}

\noindent
We begin by interpreting the constraint qualifications from Section \ref{sec:CQ}. 
Here, and for the remainder of this section, 
$h=\delta_{\cV}$ and so $\dom h=\cV$ and $\dom h^*=\barr \cV$.

\begin{lemma}[Constraint qualifications  for \eqref{eq:p indicator}]\label{lem: CQ indicator} Let $p$ be given by \eqref{eq:p indicator}.  Then the following hold:
\begin{itemize}
\item [(a)] (CCQ) The  conditions 
\begin{eqnarray}
 \cV\cap \inter \cK_A\neq  \emptyset, \label{eq:CQ indicator 1a}\\
\overline\cone \cV-\cK_A  = \bS^n \label{eq:CQ indicator 1b}
\end{eqnarray}
are each equivalent to CCQ for $p$.
Moreover, if CCQ holds, then SCCQ holds if and only if 
\begin{equation}\label{eq:indicator sccq}
\emptyset\ne \Xi(A,B)
=\set{Y\in\Rnm}{(Y,0)\in\Omega(A,B)+(\{0\}\times \barr\cV)}.
\end{equation}
\item[(b)] (PCQ) The PCQ holds for $p$ if and only if 
\begin{equation}\label{eq:PCQ indicator}
\pos (\Omega_2(A,B))+\barr\cV= \lin (\Omega_2(A,B)+\barr \cV).
\end{equation}

\item[(c)] (BPCQ) The conditions 
\begin{eqnarray}
\cV\cap\cK_A\neq \emptyset \AND \mathcal{V}^\infty \cap \cK_A=\{0\} \label{eq:CQ indicator 2a},\\
\cV\cap\cK_A\neq \emptyset \; \text{is bounded},\label{eq:CQ indicator 2b}\\
	\cV\cap\cK_A\neq \emptyset \AND  \barrier \cV+\cK_A^\circ =\bS^n\label{eq:CQ indicator 2c}
\end{eqnarray}
are each equivalent to BPCQ for $p$, hence imply \eqref{eq:PCQ indicator}.

\end{itemize}

\end{lemma}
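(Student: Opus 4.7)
The overall strategy is to substitute $h=\del_\cV$ — so that $\dom h=\cV$ and $\dom h^*=\barr\cV$ — into the general characterizations of Section \ref{sec:CQ} and Lemmas \ref{lem:CCQ equiv} and \ref{lem:PCQ Equiv}. The proof parallels that of Lemma \ref{lem: CQ support} with the roles of $\cV$ and $\barr\cV$ interchanged.

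For part (a), \eqref{eq:CQ indicator 1a} is the definition of CCQ via \eqref{eq:simple ccq}. To establish \eqref{eq:CQ indicator 1b} $\iff$ CCQ I will apply Lemma \ref{lem:CCQ equiv}(iii), which requires identifying $\hzn h^*$. Since $h^*=\sig_\cV$ is closed sublinear, $\hzn\sig_\cV=\{W:\sig_\cV(W)\le 0\}=(\overline\cone\cV)^\circ$, the polar cone of the closed convex conical hull of $\cV$. The standard polarity formula $(K_1+K_2)^\circ=K_1^\circ\cap K_2^\circ$ for closed convex cones then gives $(\overline\cone\cV-\cK_A)^\circ=(\overline\cone\cV)^\circ\cap(-\cK_A^\circ)$, and the bipolar equates $\cl(\overline\cone\cV-\cK_A)=\bS^n$ with CCQ. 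To remove the closure I will argue CCQ $\Rightarrow$ \eqref{eq:CQ indicator 1b} directly: given $V_0\in\cV\cap\inter\cK_A$ and any $W\in\bS^n$, the interior-point property of $V_0$ implies $\lam V_0-W\in\cK_A$ for all $\lam>0$ large enough (since $V_0+\epsilon B\subset\cK_A$ for some $\epsilon>0$ and $\cK_A$ is a cone), so $W=\lam V_0-(\lam V_0-W)\in\cone\cV-\cK_A\subset\overline\cone\cV-\cK_A$. The SCCQ formula \eqref{eq:indicator sccq} is then a direct specialization of \eqref{eq:sccq}.

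Part (b) follows from Lemma \ref{lem:PCQ Equiv}(a) combined with \eqref{eq:RintChar}: PCQ is by definition $0\in\ri(\Omega_2(A,B)+\barr\cV)$, which by \eqref{eq:RintChar} translates to the $\pos/\lin$ identity in \eqref{eq:PCQ indicator}, using that $\barr\cV$ is a convex cone (as the barrier cone of a closed convex set) to simplify the left-hand side via the inclusion chain $\pos(\Omega_2+\barr\cV)\subset\pos\Omega_2+\barr\cV\subset\lin(\Omega_2+\barr\cV)$, which collapses whenever the outer terms agree.

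For part (c), condition \eqref{eq:CQ indicator 2a} is Definition \ref{def:CQ}(iii) for $h=\del_\cV$, and \eqref{eq:CQ indicator 2b} is a direct restatement of Lemma \ref{lem:PCQ Equiv}(b). For \eqref{eq:CQ indicator 2c}, the equivalence $\cV^\infty\cap\cK_A=\{0\}\iff\barr\cV+\cK_A^\circ=\bS^n$ combines the polarity \eqref{eq:BarrierPolarity} (namely $\cl(\barr\cV)=(\cV^\infty)^\circ$) with the Rockafellar cone-sum identity cited in the proof of Lemma \ref{lem: CQ support}(c) (\cite[Section 3.3, Ex.~16]{BoL 00} together with \cite[Theorem 6.3]{RTR70}) that permits dropping the closure. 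The concluding implication of (c) — that any of \eqref{eq:CQ indicator 2a}--\eqref{eq:CQ indicator 2c} implies \eqref{eq:PCQ indicator} — is Lemma \ref{lem:PCQ Implication}(a).

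The principal technical obstacle is removing the closures in the cone-sum identities $\overline\cone\cV-\cK_A=\bS^n$ (part (a)) and $\barr\cV+\cK_A^\circ=\bS^n$ (part (c)): the bipolar arguments only yield these up to closure, and the drop requires either a direct interior-point construction (as above for (a)) or an appeal to the strengthened Rockafellar cone-sum theorem (for (c)).
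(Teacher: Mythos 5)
Your proposal is correct and takes essentially the same route as the paper's own proof: substitute $\dom h=\cV$ and $\dom h^*=\barr\cV$ into Lemmas \ref{lem:CCQ equiv} and \ref{lem:PCQ Equiv}, identify $\hzn \sig_\cV=(\overline\cone\,\cV)^\circ$ and pass through the polar-cone calculus for part (a), apply \eqref{eq:RintChar} for part (b), and use the polarity pairing \eqref{eq:BarrierPolarity} together with the cone-sum identity of \cite[Section 3.3, Exercise 16]{BoL 00} for part (c). The only difference is cosmetic but welcome: you remove the closure in \eqref{eq:CQ indicator 1b} by an explicit interior-point construction ($W=\lam V_0-(\lam V_0-W)$ with $\lam$ large), whereas the paper disposes of this step by citing \cite[Theorem 6.3]{RTR70}.
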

\begin{proof}
(a) First, observe that , with $h=\del_\cV$,  condition (i) in Lemma \ref{lem:CCQ equiv}  is exactly \eqref{eq:CQ indicator 1a}. By the same lemma this is equivalent to 
\(
\hzn \sigma_\cV\cap (-\cK_A^\circ)=\{0\}.
\)
Moreover, since $\sigma_\cV=\sigma_\cV^\infty$, we have 
\(
\hzn \sigma_\cV=\set{V}{\sigma_{\cV}(V)\leq 0}=(\cone \cV)^\circ.
%\footnotemark
\)
%\footnotetext{Note that $\cV^-$ indicates the polar {\em cone} of $\cV$ and not the polar set so that $\cV^{--}=\overline \cone \cV$, cf. \cite[Section 3.3, Exercise 15]{BoL 00}.}
Invoking the results in \cite[Section 3.3, Exercise 16 (a)]{BoL 00} implies that 
\(
\hzn \sigma_\cV\cap (-\cK^\circ_A)=\{0\}
\)
if and only if \(\cl(\overline\cone \cV-\cK_A)  = \bS^n,
\)
where the closure in the latter statement can clearly be dropped, e.g. by interpreting  \cite[Theorem 6.3]{RTR70} accordingly.
\smallskip

\noindent
(b) Use \eqref{eq:RintChar} to infer that PCQ holds for $p$ if and only if 
\[
\pos (\Omega_2(A,B))+\barr V=\pos(\Omega_2(A,B)+\barr V)=\lin(\Omega_2(A,B)+\barr \cV).
\]
 
\smallskip

\noindent
(c) The equivalences of BPCQ, \eqref{eq:CQ indicator 2a}, and \eqref{eq:CQ indicator 2b} are clear.   Since $\cV^\infty$ and $\cl(\barrier \cV)$ are paired in polarity, see \eqref{eq:BarrierPolarity}, \cite[Section 3.3, Exercise 16 (a)]{BoL 00} implies that
\(
\cV^{\infty} \cap \cK_A =\{0\}\)
if and only if  
\(\cl(\barrier \cV+\cK_A^\circ) = \bS^n,
\)
where the closure in the latter statement can be dropped as in (a). This establishes all equivalences. 
\end{proof}

\noindent
The following result provides sufficient conditions for  $p$ being closed, proper, convex when
%$p$ is given  as in \eqref{eq:p indicator}, i.e.
%in the case that 
$h$ is an indicator function.

\begin{corollary}\label{cor:p=p^**} Let $p$ be given by \eqref{eq:p indicator}. Then 
 $p\in \Gamma_0(\R^{n\times m})$ under any of the following conditions: 
 \begin{itemize}
 \item[(i)] \eqref{eq:indicator sccq} holds along with
 either \eqref{eq:CQ indicator 1a} or \eqref{eq:CQ indicator 1b}.
 \item[(ii)] \eqref{eq:PCQ indicator} holds.
 \item[(iii)] Any one of  
 \eqref{eq:CQ indicator 2a}-\eqref{eq:CQ indicator 2c} holds.
 \end{itemize}
% Moreover, in all of these cases $p\in \Gamma_0(\Rnm)$. 
\end{corollary}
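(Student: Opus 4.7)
The plan is to recognize each of the three hypothesis sets as one of the constraint qualifications introduced in Section~\ref{sec:CQ}, after translating them through Lemma~\ref{lem: CQ indicator} for the indicator case $h=\delta_\cV$, and then to invoke the general results for $p$ obtained in Section~\ref{sec:Main}. Once the translations are in place, the conclusion follows by citing the appropriate theorem; no new computation is required.

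For case (i), Lemma~\ref{lem: CQ indicator}(a) identifies each of \eqref{eq:CQ indicator 1a} and \eqref{eq:CQ indicator 1b} with CCQ, and the supplementary condition \eqref{eq:indicator sccq} is precisely the defining requirement $\Xi(A,B)\neq\emptyset$ appearing in Definition~\ref{def:CQ}(v). Thus (i) is exactly SCCQ. Theorem~\ref{th:inf-proj psi}(c) then gives $\dom p=\R^{n\times m}$, and (c.II) of the same theorem yields that $p$ is (convex) finite-valued; a finite-valued convex function on $\R^{n\times m}$ is automatically continuous and proper, so $p\in\Gamma_0(\R^{n\times m})$.

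For case (ii), Lemma~\ref{lem: CQ indicator}(b) identifies \eqref{eq:PCQ indicator} with PCQ, and Theorem~\ref{th:p under PCQ}(a) directly yields $p\in\Gamma_0(\R^{n\times m})$ (with the standing understanding, inherited from Theorem~\ref{th:p under PCQ}, that $\dom h\cap \cK_A=\cV\cap\cK_A\neq\emptyset$, i.e.\ $\dom p\neq\emptyset$). For case (iii), Lemma~\ref{lem: CQ indicator}(c) identifies any of \eqref{eq:CQ indicator 2a}--\eqref{eq:CQ indicator 2c} with BPCQ; since BPCQ explicitly contains $\cV\cap\cK_A\neq\emptyset$, we have $\dom p\neq\emptyset$ by Lemma~\ref{lem:p}(b), and by Lemma~\ref{lem:PCQ Implication}(a) BPCQ implies PCQ, so a second application of Theorem~\ref{th:p under PCQ}(a) delivers $p\in\Gamma_0(\R^{n\times m})$. (Alternatively, Lemma~\ref{lem:p}(e) already yields properness from BPCQ, leaving only closedness, which follows from the PCQ implication.)

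There is really no hard step; the only minor bookkeeping issue is that PCQ by itself does not ensure $\dom p\neq\emptyset$, so in case (ii) one must read the statement with the tacit requirement $\cV\cap\cK_A\neq\emptyset$ used throughout Section~\ref{sec:Main}. With that caveat, the corollary is a clean consolidation of the three qualification-based pathways to membership of $p$ in $\Gamma_0(\R^{n\times m})$, exploiting the special structure $h=\delta_\cV$ to reformulate each constraint qualification in terms of the set $\cV$ alone.
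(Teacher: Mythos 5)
Your proof is correct and follows essentially the same route as the paper, which likewise cites Lemma~\ref{lem: CQ indicator} together with Theorem~\ref{th:inf-proj psi}(c) for case (i) and Theorem~\ref{th:p under PCQ} for cases (ii) and (iii). Your explicit remark that case (ii) tacitly needs $\cV\cap\cK_A\neq\emptyset$ (as assumed in Theorem~\ref{th:p under PCQ}) is a fair and accurate reading of the hypotheses.
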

\begin{proof} Follows   from Lemma \ref{lem: CQ indicator} and  Theorem \ref{th:inf-proj psi} (c) and Theorem \ref{th:p under PCQ}, respectively. 
\end{proof}

\noindent
The case $A=0$ and $B=0$ is of particular interest in applications to
variational Gram functions in Section \ref{sec:VGF}.
%as we will use it in Section \ref{sec:VGF}.

\begin{corollary}\label{cor:Indicator PCQ} Let $p$ be given as in 
\eqref{eq:p indicator} with $A=0$ and $B=0$ so that
$\cK_A=\Sn_+$ and $\cK_A^\circ=\Sn_-$. Assume that $\cV\cap \bS^n_{+}\ne\emptyset$.
 Then 
\\
\centerline{$\text{PCQ}\iff \text{SPCQ} \iff \Sn_{-}+\barr \cV=\Sn\iff \text{BPCQ}$.}
\\
Moreover, $p\in \Gamma_0(\R^{n\times m})$ under any of  following conditions:
\begin{itemize}
\item[(i)] (SCCQ) %$\emptyset\ne\Xi(0,0)=
$\set{Y\in\Rnm}{\exists\ T\in\barr \cV\, :\, \half YY^T\preceq T}\ne
\emptyset$
and $\cV\cap \bS^n_{++}\neq \emptyset$; 
\item[(ii)] (PCQ)  $\Sn_{-}+\barr \cV=\Sn$;
\item[(iii)] ((B/S)PCQ) $\emptyset\ne \cV\cap \Sn_{+}$ is bounded.
%$\cV\cap \bS^n_{+}\neq \emptyset$ is bounded  
%(or equivalently $\bS_{+}^n+\barr \cV=\bS^n$)  \quad ((B/S)PCQ).
	\end{itemize}
%	Under either of these conditions  $p\in \Gamma_0(\Rnm)$.
%	Moreover, condition  ii)  is equivalent to 
%	\[
%	\barrier \cV+\bS^n_-=\bS^n.
%	\]
\end{corollary}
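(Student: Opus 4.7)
My plan is to specialize Lemma~\ref{lem: CQ indicator} and Corollary~\ref{cor:p=p^**} to the case $A=0,\,B=0$, which reduces everything to the familiar cones $\Sn_\pm$. The first order of business is to identify the simplified objects: $\ker A=\R^n$ gives $\cK_A=\Sn_+$ and $\cK_A^\circ=\Sn_{-}$, while unpacking $\Omega(0,0)=\set{(Y,W)}{W\preceq -\half YY^T}$ at $Y=0$ yields $\Omega_2(0,0)=\Sn_{-}$. I also record that $0\in\barr\cV$ automatically (since $\sig_\cV(0)=0$), so in particular $\Sn_{-}\subset\Sn_{-}+\barr\cV$.

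For the equivalence chain, the implications $\mathrm{BPCQ}\Rightarrow\mathrm{SPCQ}\Rightarrow\mathrm{PCQ}$ are immediate from Lemma~\ref{lem:PCQ Implication}(a), and $\mathrm{BPCQ}\iff\Sn_{-}+\barr\cV=\Sn$ follows by substituting $\cK_A^\circ=\Sn_{-}$ into~\eqref{eq:CQ indicator 2c}, using the standing assumption $\cV\cap\Sn_+\ne\emptyset$. The only nontrivial step -- and the main obstacle of the proof -- is the link $\mathrm{PCQ}\Rightarrow\Sn_{-}+\barr\cV=\Sn$. Here I would use Lemma~\ref{lem: CQ indicator}(b): PCQ reads $\pos\Omega_2(0,0)+\barr\cV=\lin(\Omega_2(0,0)+\barr\cV)$; since $\Omega_2(0,0)=\Sn_{-}$ is already a convex cone with $\pos\Sn_{-}=\Sn_{-}$, this collapses to the statement that $\Sn_{-}+\barr\cV$ is a linear subspace of $\Sn$. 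Because this subspace contains $\inter\Sn_{-}$, which is a nonempty open subset of $\Sn$, it must equal all of $\Sn$.

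For the three sufficient conditions for $p\in\Gamma_0$, the equivalence chain already does most of the work. Condition (i) is precisely SCCQ in this setting: CCQ specializes via~\eqref{eq:CQ indicator 1a} to $\cV\cap\Sn_{++}\ne\emptyset$, and unpacking $\Xi(0,0)\ne\emptyset$ from~\eqref{eq:indicator sccq} -- writing $(Y,0)=(Y,W_1)+(0,W_2)$ with $W_1\preceq -\half YY^T$, $W_2\in\barr\cV$, $W_1+W_2=0$, and setting $T:=W_2$ -- gives exactly the displayed set-inequality, so Theorem~\ref{th:inf-proj psi}(c) yields $p\in\Gamma_0$ (in fact finite-valued). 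Condition (ii) is PCQ by the equivalence chain just established, so Theorem~\ref{th:p under PCQ} applies; and (iii) is the boundedness form~\eqref{eq:CQ indicator 2b} of BPCQ, which implies PCQ and again invokes Theorem~\ref{th:p under PCQ}.
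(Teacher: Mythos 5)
Your proposal is correct and follows essentially the same route as the paper: specialize Lemma~\ref{lem: CQ indicator} with $\Omega_2(0,0)=\Sn_-$, close the equivalence chain via the observation that a linear subspace containing the nonempty open set $\inter\Sn_-$ must be all of $\Sn$ (the paper's ``span of a set with interior is the whole space''), and obtain $p\in\Gamma_0$ from the general results (the paper routes (i)--(iii) through Corollary~\ref{cor:p=p^**}, which is just a wrapper for the Theorems~\ref{th:inf-proj psi} and~\ref{th:p under PCQ} you cite directly). Your write-up merely supplies the details the paper leaves implicit.
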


\begin{proof} 
First note that $\Xi(0,0)=\set{Y\in\Rnm}{\exists\ T\in\barr \cV\, :\, \half YY^T\preceq T}$
and $\Omega_2(0,0)=\bS_{-}^n=\cK^\circ_{A}$. 
The first statement now follows from 
Lemma \ref{lem: CQ indicator} and the definition of PCQ  and SPCQ, resepectively, since the span of a set with interior is the whole space. The remaining implications follow from
 Corollary \ref{cor:p=p^**} and Lemma \ref{lem: CQ indicator}.
\end{proof}

\noindent
We directly compute the conjugate $p^*$ using techniques from \cite[Theorem 3.2]{BuH15}.
%For these purposes we recall the following result.
%
%
%\begin{theorem}[{\cite[Theorem 3.2]{BuH15}}]\label{th:EqConQP}
%The optimal value-function $v:\bE\times \R^{\ell\times n}\times \R^p\to \rbar$  given by
%$$
%v:(V,x,A,b)\mapsto \inf_{u\in\bR^n} \set{ \half u^TV u-x^Tu }{Au=b}
%$$
%reads
%%{\small
%$$
%v(V,x,A,b)=\left\{\begin{array}{lcl}-\half\left(\begin{array}{c}x\\ b \end{array}\right)^T M(V)^\dagger \left(\begin{array}{c}x\\ b \end{array}\right) &  {\rm if } & 
%\left(\begin{array}{c}x\\ b \end{array}\right)\in\rge  M(V),\;  V\in\cK_A,\\ 
%+\infty &  {\rm if }  &  b\notin \rge A\\
%-\infty & {\rm else.}  
%\end{array}\right. 
%$$
%%}
%\end{theorem}

\begin{theorem}[Infimal projection with an indicator function]\label{th:indicatorConjugateGeneral}
Let $p$ be given by \eqref{eq:p indicator}. % and set $\psi:=\varphi+\delta_{\cV}$. 
Assume that 
\begin{equation}\label{eq:nonempty domain 2}
\emptyset\ne\dom(\varphi+\delta_{\cV})=\set{(X,V)\in\bE}{V\in \cV\cap\cK_A\AND \rge \binom{X}{B}\subset \rge M(V)}.
\end{equation}
Then $p^*:\R^{n\times m}\to\rbar$ is given by
\\
\centerline{
$
p^*(Y) = \half
\support{YY^T}{\cV\cap\cK_A}+\indicator{Y}{\set{Z}{AZ=B}}.
$}
%=\begin{cases}
%\displaystyle{\sup_{V\in\cV\cap\cK_A}}\frac{1}{2}\tr (Y^TVY)
%		& {\rm if} \ \; AY = B ,\\
%		+\infty & {\rm else}.
%\end{cases}
\\
In particular, for $A=0$ and $B=0$ we obtain 
\(
p^*(Y)=\frac{1}{2}\sigma_{\cV\cap \bS^n_+}\left(YY^T\right).
\)
\end{theorem}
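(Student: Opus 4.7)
My plan is to compute $p^*$ directly by interchanging suprema, bypassing Lemma \ref{lem:conj psi} entirely so as to avoid any constraint qualification. By definition,
\[
p^*(Y)=\sup_{X\in\Rnm}\Bigl[\ip{X}{Y}-\inf_{V\in\Sn}\bigl(\vphi(X,V)+\indicator{V}{\cV}\bigr)\Bigr]
=\sup_{V\in\cV}\vphi(\cdot,V)^*(Y),
\]
where $\vphi(\cdot,V)^*$ denotes the Fenchel conjugate in the first argument; this step is merely the commutativity of two suprema and costs nothing. The task therefore reduces to identifying $\vphi(\cdot,V)^*$ for each fixed $V\in\cV$.

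Next I would read off from the support-function description \eqref{eq:GMF support}--\eqref{eq:D} that
\[
\vphi(X,V)=\sup_{AY=B}\Bigl[\ip{X}{Y}-\tfrac12\ip{V}{YY^T}\Bigr]=g_V^*(X),\qquad
g_V(Y):=\tfrac12\ip{V}{YY^T}+\indicator{Y}{\set{Z}{AZ=B}}.
\]
Hence $\vphi(\cdot,V)^*=g_V^{**}$, and the crux is deciding when $g_V\in\Gamma_0(\Rnm)$. Closedness is clear (continuous quadratic plus indicator of a closed set), and properness follows from $\set{Z}{AZ=B}\neq\emptyset$ (the standing assumption $\rge B\subset\rge A$). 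Expanding $g_V(Y_0+D)$ around any feasible $Y_0$ shows that $g_V$ is convex on the feasible affine set precisely when $\tr(D^TVD)\geq 0$ for every $D$ whose columns lie in $\ker A$, i.e., $V\in\cK_A$, cf.\ \eqref{eq:K}. In that case $g_V^{**}=g_V$, so $\vphi(\cdot,V)^*(Y)=\tfrac12\ip{V}{YY^T}+\indicator{Y}{\set{Z}{AZ=B}}$.

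For $V\in\cV\setminus\cK_A$, pick $u\in\ker A$ with $u^TVu<0$, any $Y_0$ with $AY_0=B$, and a nonzero $v\in\Rm$; along the ray $Y=Y_0+\lambda uv^T$ the leading quadratic coefficient of $g_V(Y)$ in $\lambda$ is $\tfrac12(u^TVu)\tnorm{v}^2<0$, so $g_V(Y)\to-\infty$, giving $\vphi(\cdot,V)=g_V^*\equiv+\infty$ and thus $\vphi(\cdot,V)^*\equiv-\infty$. Such $V$ contribute nothing to the outer supremum, and the hypothesis $\dom(\vphi+\indicator{\cdot}{\cV})\neq\emptyset$ ensures $\cV\cap\cK_A\neq\emptyset$. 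Assembling,
\[
p^*(Y)=\!\!\sup_{V\in\cV\cap\cK_A}\!\!\Bigl[\tfrac12\ip{V}{YY^T}\Bigr]+\indicator{Y}{\set{Z}{AZ=B}}=\tfrac12\support{YY^T}{\cV\cap\cK_A}+\indicator{Y}{\set{Z}{AZ=B}},
\]
and the specialization $A=0,\,B=0$ is immediate since then $\cK_A=\Snp$ and the affine indicator vanishes. The only genuinely delicate step is the one-parameter blow-up argument showing $g_V^*\equiv+\infty$ for $V\notin\cK_A$; everything else is bookkeeping around biconjugation.
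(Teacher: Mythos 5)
Your proof is correct, and it takes a genuinely different route from the paper's. Both arguments begin identically, writing $p^*(Y)=\sup_{V\in\cV}\vphi(\cdot,V)^*(Y)$ by interchanging suprema, and neither needs a constraint qualification beyond the nonemptiness hypothesis. From there the paper works with the closed-form expression \eqref{eq:DefPhi}: it substitutes $M(V)\binom{U}{W}=\binom{X}{B}$, splits the resulting problem column by column, invokes the quadratic-programming value formula of \cite[Theorem 3.2]{BuH15}, and simplifies via $\binom{Vy_i}{b_i}=M(V)\binom{y_i}{0}$ to reach $\sup_{V\in\cV\cap\cK_A}\frac12\tr(Y^TVY)$ plus the affine indicator. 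You instead stay with the support-function definition \eqref{eq:GMF support}, observe that $\vphi(\cdot,V)=g_V^*$ for the explicit function $g_V(Y)=\frac12\ip{V}{YY^T}+\indicator{Y}{\set{Z}{AZ=B}}$, and reduce everything to the biconjugation $g_V^{**}=g_V$, which holds exactly when $g_V$ is convex, i.e.\ $V\in\cK_A$; your ray argument correctly disposes of $V\in\cV\setminus\cK_A$ by showing $g_V$ is unbounded below there, so $\vphi(\cdot,V)^*\equiv-\infty$ and such $V$ drop out of the outer supremum. Your route avoids the pseudoinverse machinery and the external QP lemma entirely, and it makes conceptually transparent why the intersection with $\cK_A$ appears (it is precisely the convexity condition for $g_V$ restricted to the feasible affine set); the paper's computation, on the other hand, stays aligned with the $M(V)^\dagger$-based formula \eqref{eq:DefPhi} that is reused throughout the rest of the paper. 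Both proofs use the hypothesis \eqref{eq:nonempty domain 2} only to guarantee $\cV\cap\cK_A\ne\emptyset$, as you do.
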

\begin{proof}
%If there is an $X\in\Rnm$ for which $-\infty=\inf_V\varphi(X,V) + \delta_\mathcal{V}(V)$,
%then $p$ is improper and $p^*\equiv +\infty$ so the representation for $p^*$ holds.
%Hence, we can assume that there is an $X\in\dom p$ for which 
%$-\infty<\inf_V\varphi(X,V) + \delta_\mathcal{V}(V)$.
By \eqref{eq:GMF} and our assumption that $\emptyset\ne\dom(\varphi+\delta_{\cV})$, we have
\[
\begin{aligned}
p^*(Y) &= \sup_X\left[ \langle X, Y \rangle - \inf_V %\sigma_{ \cD (A,B)} 
\varphi(X,V) + \delta_\mathcal{V}(V)\right]\\
&= \sup_V\sup_X \left[\langle X, Y \rangle - \sigma_{ \Omega(A,B)} (X,V) - \delta_\mathcal{V}(V)\right]\\
&= \sup_{V\in\cV\cap\cK_A}
\sup_{\rge \binom{X}{B}\subset \rge M(V)} 
\ip{X}{Y}-\frac{1}{2}\tr\left(\binom{X}{B}^TM(V)^\dagger \binom{X}{B}\right) ,
%\\ &=&\tr\left( -\frac{1}{2}\binom{X}{B}^TM(V)^\dagger \binom{X}{B} + Y^TX \right)
\end{aligned}
\]
for $Y\in\R^{n\times m}$.
Since $\rge \binom{X}{B}\subset \rge M(V)$, we make the substitution 
$
M(V)\binom{U}{W} = \binom{X}{B}
$
to obtain
\begin{eqnarray*}
p^*(Y) &=& \sup_{V\in\cV\cap \cK_A}
\sup_{\substack{U, W\\ AU = B}} 
\tr\left( -\frac{1}{2}\binom{U}{W}^TM(V)\binom{U}{W} + Y^T(VU + A^TW) \right)
\\
&=& \sup_{V\in\cV\cap \cK_A}-
\sum_{i=1}^m\inf_{\substack{u_i, w_i\\ Au_i = b_i}}\left( \frac{1}{2}\binom{u_i}{w_i}^TM(V)\binom{u_i}{w_i} - y_i^TVu_i - w_i^TAy_i \right)
\\
&=& \sup_{V\in\cV\cap \cK_A}-
\sum_{i=1}^m\inf_{\substack{u_i, w_i\\ Au_i = b_i}}
\left( \frac{1}{2}u_i^TVu_i - \langle Vy_i, u_i \rangle + \langle w_i, b_i - Ay_i \rangle \right)
\\
&=& \sup_{V\in\cV\cap \cK_A}-
\sum_{i=1}^m\left[
\inf_{Au_i = b_i}
\left( \frac{1}{2}u_i^TVu_i - \langle Vy_i, u_i \rangle\right) + 
\inf_{w_i}\left(\langle w_i, b_i - Ay_i \rangle \right)\right]
\\
&=& \delta_{\set{Z}{AZ=B}}(Y)+ \sup_{V\in\cV\cap \cK_A}-
\sum_{i=1}^m
\inf_{Au_i = b_i}
\left( \frac{1}{2}u_i^TVu_i - \langle Vy_i, u_i \rangle\right),
\end{eqnarray*}
where the final equality follows since 
$\delta_{\set{y}{Ay=b_i}}(y_i)=\sup_{w_i}\ip{w_i}{Ay_i-b_i } \;(i=1,\dots,m)$.
By hypothesis $\rge B\subset\rge A$, and so, by \cite[Theorem 3.2]{BuH15}\[
-\half \binom{Vy_i}{b_i}^TM(V)^\dagger\binom{Vy_i}{b_i}=
\inf_{Au_i = b_i}
\left( \frac{1}{2}u_i^TVu_i - \langle Vy_i, u_i \rangle\right)
\quad (i=1,\dots,m),
\]
%since $V\in\cK_A$.
%If there exists $1\le i\le m$ such that $Ay_i \neq b_i$, then $f^*(Y) = +\infty$; otherwise,
Therefore, when $AY=B$, we have
\begin{eqnarray*}
p^*(Y) %&=& 
%\sup_{V\in\cV\cap \cK_A}-
%\sum_{i=1}^m\inf_{\substack{Au_i = b_i\\ Ay_i = b_i}}
%\left( \frac{1}{2}u_i^TVu_i - \langle Vy_i, u_i \rangle \right)\\
&=& \sup_{V\in\cV\cap \cK_A}-\sum_{i=1}^m-\frac{1}{2}\binom{Vy_i}{b_i}^TM(V)^\dagger\binom{Vy_i}{b_i}\qquad \qquad\qquad
\left(\begin{matrix}\text{where $Ay_i=b_i$ so}\\ 
\binom{Vy_i}{b_i}=M(V)\binom{y_i}{0}\end{matrix}\right)
\\
&=& \sup_{V\in\cV\cap \cK_A}\frac{1}{2}
\sum_{i=1}^m\left(M(V)\binom{y_i}{0}\right)^TM(V)^\dagger\left(M(V)\binom{y_i}{0}\right)\\
&=& \sup_{V\in\cV\cap \cK_A}
\frac{1}{2}\sum_{i=1}^m \binom{y_i}{0}^TM(V)\binom{y_i}{0}^T\\
&=& \sup_{V\in\cV\cap \cK_A}\frac{1}{2}
\sum_{i=1}^my_i^TVy_i\\
&=& \sup_{V\in\cV\cap \cK_A}\frac{1}{2}\tr (Y^TVY),
\end{eqnarray*}
which proves the general expression for $p^*$. The case $A=0, B=0$ follows.
\end{proof}

%
%
%\noindent
%We now study the subdifferential of $p$ when $h=\delta_{\cV}$.

\begin{corollary}\label{cor:Subdiff p indicator}  Let $p$ be given by \eqref{eq:p indicator}.  If SCCQ holds, i.e.,
\\
\centerline{$\cV\cap \inter \cK_A\neq \emptyset$ and
$ \set{Y\in\Rnm}{(Y,0)\in\Omega(A,B)+(\{0\}+\barr\cV)}\ne \emptyset$,}
\\ 
then 
\[
\p p(\bar X)=\argmax_{Y}\{\ip{\bar X}{Y}-\inf_{(Y,T)\in\Omega(A,B)}\sigma_\cV(-T)\}
\]
is nonempty and compact for all $\bar X\in \R^{n\times m}$. 
Alternatively, if $\cV\cap \inter \cK_A\neq \emptyset$ (CCQ) and 
\\
\centerline{$\pos \Omega_2(A,B)+\barrier \cV=\lin(\Omega_2(A,B)+\barrier \cV)$ (PCQ)} 
\\
hold,
then 
\[
\p p(\bar X)=\set{\bar Y}{\exists \bar V,\bar T:\;-\bar T\in N_{\cV}(\bar V),\; (\bar Y,\bar T)\in \p \vphi(\bar X,\bar V)}
\]
is nonempty and compact for all $\bar X\in \R^{n\times m}$.
 \end{corollary}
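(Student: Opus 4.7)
The plan is to observe that this corollary is a direct specialization of Proposition \ref{prop:SD + PS} (parts (a) and (c)) to the situation $h=\delta_{\cV}$. Under this specialization we have $h^*=\sigma_{\cV}$, $\dom h = \cV$, $\dom h^* = \barr\cV$, and $\p h(\bar V) = N_{\cV}(\bar V)$, so the substitution of these quantities into the general subdifferential formulas should yield exactly the formulas claimed here.

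For the first statement, I would first translate the two displayed hypotheses into the SCCQ of Definition \ref{def:CQ}(v) for $h = \delta_{\cV}$. The condition $\cV \cap \inter \cK_A \neq \emptyset$ is equivalent to CCQ for $h=\delta_\cV$ via \eqref{eq:simple ccq} (or by Lemma \ref{lem: CQ indicator}(a), recalling that $\ri(\dom \delta_\cV) = \ri \cV$ and that the relative interior condition in the line segment principle collapses to the one stated). The second hypothesis states exactly that $\Xi(A,B)\ne\emptyset$ in the form given in \eqref{eq:sccq}, since $\dom h^* = \barr\cV$. Having verified SCCQ, I would apply Proposition \ref{prop:SD + PS}(a) to conclude that $\p p(\bar X)$ is nonempty and compact for every $\bar X \in \R^{n\times m}$ and is given by
\[
\p p(\bar X) = \argmax_{Y}\Bigl\{\ip{\bar X}{Y}-\!\!\inf_{(Y,T)\in\Omega(A,B)}\!\!h^*(-T)\Bigr\},
\]
which becomes the stated formula once $h^* = \sigma_{\cV}$ is substituted in.

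For the second statement, I would again translate the hypotheses into the general framework: the condition $\cV\cap\inter\cK_A \neq \emptyset$ is CCQ for $h=\delta_\cV$ (as above), and the equality $\pos \Omega_2(A,B) + \barr\cV = \lin(\Omega_2(A,B)+\barr\cV)$ is PCQ as expressed in \eqref{eq:PCQ indicator} of Lemma \ref{lem: CQ indicator}(b) (using that $\pos\Omega_2(A,B) = \Omega_2(A,B)$ since $0\in\Omega_2(A,B)$, or more carefully, using the characterization \eqref{eq:RintChar} of the relative interior). With PCQ and CCQ in hand, Proposition \ref{prop:SD + PS}(c) gives
\[
\p p(\bar X) = \set{Y}{\exists\, \bar V,\bar T:\; -\bar T\in\p h(\bar V),\ (Y,\bar T)\in \p\vphi(\bar X,\bar V)},
\]
nonempty and compact for every $\bar X$. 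Substituting $\p h = \p \delta_{\cV} = N_{\cV}$ yields the claimed formula.

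The only mild obstacle is the bookkeeping in verifying that each hypothesis stated in the corollary matches precisely with the constraint qualifications used in Proposition \ref{prop:SD + PS}; this is a routine check using Lemma \ref{lem: CQ indicator} together with the trivial specializations $\dom h=\cV$, $\dom h^*=\barr\cV$, $h^*=\sig_\cV$, and $\p h = N_\cV$. No new technical work is required beyond these identifications.
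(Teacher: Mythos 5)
Your proposal is correct and matches the paper's proof, which likewise derives the corollary directly from Proposition \ref{prop:SD + PS} combined with Lemma \ref{lem: CQ indicator} under the specializations $h=\delta_\cV$, $h^*=\sigma_\cV$, $\p h=N_\cV$. (Your parenthetical ``$\pos\Omega_2(A,B)=\Omega_2(A,B)$ since $0\in\Omega_2(A,B)$'' is not a valid inference, but it is immaterial since the corollary's hypothesis is verbatim the PCQ condition \eqref{eq:PCQ indicator}, as your fallback via \eqref{eq:RintChar} correctly notes.)
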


\begin{proof} This follows from  Proposition \ref{prop:SD + PS}  in combination with  Lemma \ref{lem: CQ indicator}.
\end{proof}

\subsection{$B=0$ and $0\in\cV$}

We now consider the important special case of $p$ given by \eqref{eq:p indicator}  where $0\in\cV$ and $B=0$. In this case $p$ turns out to be a squared  gauge function, see Corollary \ref{cor:Gauge case II}.   We start with a technical lemma.

%We lead our study with a result on the distance to a cone measured by a gauge function. This is of independent interest, but will also be used in our analysis later on, see Corollary \ref{cor:Gauge case I}. To this end, the next auxiliary result is useful.
%
%
%
%
%
\begin{lemma}\label{lem:polar sum}
Let $C,K\subset\bE$ be nonempty, convex 
with $K$ being a cone. Then
$(C+K)^\circ=C^\circ\cap K^\circ$. If
$C+K$ is closed with $0\in C$, then $(C^\circ\cap K^\circ)^\circ=C+K$.
In particular, the set $C+K$ is closed if $C$ and $K$ are closed and 
$K\cap(-C^\infty)=\{0\}$.
\end{lemma}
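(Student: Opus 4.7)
\medskip

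The plan is to handle the three assertions in sequence, with each one building on the previous.

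\textbf{Step 1: $(C+K)^\circ=C^\circ\cap K^\circ$.} I would argue both inclusions directly from the definition of the polar. For ``$\subset$'': if $v\in(C+K)^\circ$, then $\ip{v}{c+k}\le 1$ for all $c\in C$, $k\in K$. Taking $k=0$ (which lies in $K$ since $K$ is a cone) gives $v\in C^\circ$. Fixing any $c\in C$ and exploiting $tk\in K$ for all $t\ge 0$ gives $t\ip{v}{k}\le 1-\ip{v}{c}$ for every $t\ge 0$, forcing $\ip{v}{k}\le 0$, whence $v\in K^\circ$. For ``$\supset$'': if $v\in C^\circ\cap K^\circ$, then for all $c\in C$, $k\in K$, $\ip{v}{c+k}=\ip{v}{c}+\ip{v}{k}\le 1+0=1$.

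\textbf{Step 2: $(C^\circ\cap K^\circ)^\circ=C+K$ under closedness and $0\in C$.} Since $K$ is a cone we have $0\in K$, so $0=0+0\in C+K$. Also $C+K$ is convex as the sum of convex sets. Taking the polar of the identity from Step 1 yields $(C^\circ\cap K^\circ)^\circ=((C+K)^\circ)^\circ$, and the bipolar theorem (which gives $(S^\circ)^\circ=\clco(S\cup\{0\})$) collapses this to $C+K$ because $C+K$ is already closed, convex, and contains $0$.

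\textbf{Step 3: Closedness of $C+K$ when $K\cap(-C^\infty)=\{0\}$.} This is a standard closedness criterion for Minkowski sums of closed convex sets. Since $K$ is a closed convex cone, its horizon cone is $K$ itself, so the hypothesis $K\cap(-C^\infty)=\{0\}$ reads as $K^\infty\cap(-C^\infty)=\{0\}$. The classical result (\cite[Corollary 9.1.2]{RTR70}, or equivalently the horizon-cone sum rule in \cite[Theorem 3.9]{RoW98}) then yields that $C+K$ is closed.

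\textbf{Main obstacle.} None of the three steps is genuinely delicate; the only subtlety is ensuring that the bipolar theorem is applied in its correct form (as $\clco(S\cup\{0\})$, which requires $0\in S$ to drop the convex hull closure down to $S$). The closedness of $C+K$ in Step 3 is quoted from the literature rather than reproved, since a self-contained argument would require the standard recession-cone machinery already collected in Section~\ref{sec:Prelim}.
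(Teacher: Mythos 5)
Your proof is correct and follows essentially the same route as the paper's: a direct verification of $(C+K)^\circ=C^\circ\cap K^\circ$ from the definition, the bipolar theorem applied to the closed convex set $C+K\ni 0$ for the second claim, and the standard recession-cone criterion for closedness of a Minkowski sum (the paper cites \cite[Corollary 9.1.1]{RTR70}) for the third. The only cosmetic difference is that the paper obtains $z\in C^\circ$ by letting $t\downarrow 0$ in $\ip{z}{x+ty}\le 1$ rather than by setting $k=0$, which avoids relying on $0\in K$; since every cone arising in the paper's applications (e.g.\ $\cK_A^\circ$, $\cS^\perp$) contains the origin, your version is equally fine.
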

\begin{proof}
Clearly, $C^\circ\cap K^\circ\subset (C+K)^\circ$. Conversely, if
$z\in (C+K)^\circ$, then $\ip{z}{x+ty}\le 1$ for all $x\in C$, $y\in K$, and $t>0$.
Multiplying this inequality by $t^{-1}$ and letting $t\to\infty$, we see that $z\in K^\circ$.
By letting $t\downarrow 0$, we see that $z\in C^\circ$. 

Now assume that $C+K$ is closed with $0\in C$.  Then 
 $C+K$ is closed and convex with $0\in C+K$.  
Hence, by \cite[Theorem 14.5]{RTR70},
$C+K=(C+K)^{\circ\circ}=(C^\circ\cap K^\circ)^\circ$. 

The final statement of the
lemma follows from
\cite[Corollary 9.1.1]{RTR70}.
\end{proof}

\noindent
The first result in this section is concerned with a representation of the conjugate $p^*$ under the standing assumptions.

\begin{corollary}[The gauge case I]\label{cor:Gauge case I}
Let $p$ be given by  \eqref{eq:p indicator}  with $0\in\cV$ and $B=0$ and let $P$ be  the orthogonal projection onto $\ker A$. 
Moreover, let 
 \[
\cS:= %\Sn\cap\Ker_nA
\set{W\in\Sn}{\rge W\subset\ker A}
=\set{W\in\Sn}{W=PWP}.
%\footnotemark
%\footnotetext{Here we consider $\cS=\Sn\cap\Ker_nA$ as a subset in the space $\bS^n$.}   
\] 
Assume that
\\
\centerline{$\emptyset\ne\set{(X,V)\in\bE}{V\in \cV\cap\cK_A\AND \rge \binom{X}{0}\subset \rge M(V)}.$}
\\
Then the following hold:
\begin{itemize}
\item[(a)] We have 
\[
p^*(Y) =\frac{1}{2}\support{YY^T}{(\cV\cap\cK_A)+\cS^\perp}=\half\gauge{YY^T}{(\cV\cap\cK_A)^\circ \cap\cS}
\]
where  $\cS^\perp=\set{V\in\Sn}{PVP=0}$.  In particular, $p^*$ is positively homogeneous of degree 2.

\item[(b)] If $\cV^\circ+\cK_A^\circ$ is closed (e.g. when $\cK_A^\circ\cap -(\cone{\cV})^\circ=\{0\}$) then
\begin{equation}\label{eq:gauge dist}
p^*(Y)  =
\half\gauge{YY^T}{(\cV^\circ\cap\cS) + \cK_A^\circ},
\end{equation}
where $\dom p^*=\set{Y}{YY^T\in \cone{(\cV^\circ\cap\cS)} +\cK_A^\circ}$.
\end{itemize}
\end{corollary}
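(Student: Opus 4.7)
The plan is to build (a) directly from Theorem~\ref{th:indicatorConjugateGeneral} specialized to $B=0$, which yields
$p^*(Y)=\tfrac{1}{2}\support{YY^T}{\cV\cap\cK_A}+\indicator{Y}{\Ker_m A}$.
The pivotal observation is that $AY=0\iff YY^T\in\cS$, because $\rge Y=\rge(YY^T)$, so $AY=0$ is equivalent to $\rge(YY^T)\subset\ker A$. Since $\cS$ is a subspace of $\Sn$, its indicator equals $\support{\cdot}{\cS^\perp}$, and this support function is $\{0,+\infty\}$-valued, so absorbing a factor $\tfrac12$ is free. Therefore
\[
p^*(Y)=\tfrac{1}{2}\bigl[\support{YY^T}{\cV\cap\cK_A}+\support{YY^T}{\cS^\perp}\bigr]
=\tfrac{1}{2}\support{YY^T}{(\cV\cap\cK_A)+\cS^\perp},
\]
by additivity of support functions over Minkowski sums, which is the first equality in (a).

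For the second equality, recall that $\support{\cdot}{C}=\gauge{\cdot}{C^\circ}$ holds for any set $C$ with $0\in C$ (polarity depends only on closed convex hulls, and the identity is standard for closed convex sets containing zero). Since $0\in\cV\cap\cK_A$ and $0\in\cS^\perp$, this applies to $C=(\cV\cap\cK_A)+\cS^\perp$. Lemma~\ref{lem:polar sum} then gives
$\bigl((\cV\cap\cK_A)+\cS^\perp\bigr)^\circ=(\cV\cap\cK_A)^\circ\cap(\cS^\perp)^\circ=(\cV\cap\cK_A)^\circ\cap\cS$,
using $(\cS^\perp)^\circ=\cS$ since $\cS$ is a subspace.

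For (b), I apply the second statement of Lemma~\ref{lem:polar sum} with $C=\cV^\circ$ and $K=\cK_A^\circ$: both are closed convex, $K$ is a cone, $0\in C$, and by hypothesis $C+K$ is closed, so $(C^\circ\cap K^\circ)^\circ=C+K$. Combined with the bipolar theorem applied to $\cV$ and $\cK_A$, this gives $(\cV\cap\cK_A)^\circ=\cV^\circ+\cK_A^\circ$. Next, Proposition~\ref{prop:K_A}(b) tells us $\cK_A^\circ\subset\cS$, from which one verifies
$(\cV^\circ+\cK_A^\circ)\cap\cS=(\cV^\circ\cap\cS)+\cK_A^\circ$:
if $w=v+k\in\cS$ with $v\in\cV^\circ$ and $k\in\cK_A^\circ\subset\cS$, then $v=w-k\in\cS$; the reverse inclusion is immediate. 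Plugging back into (a) yields \eqref{eq:gauge dist}. The domain formula follows from $\dom\gauge{\cdot}{D}=\cone D$ and the identity $\cone[(\cV^\circ\cap\cS)+\cK_A^\circ]=\cone(\cV^\circ\cap\cS)+\cK_A^\circ$ (valid since $\cK_A^\circ$ is already a cone). The parenthetical sufficient condition for closedness follows from the last assertion of Lemma~\ref{lem:polar sum} together with $(\cV^\circ)^\infty=(\cone\cV)^\circ$, which is \eqref{eq:BarrierPolarity} applied to $\cV^\circ$ after noting $\barr\cV^\circ=\cone\cV$.

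The main obstacle I anticipate is the structural extraction $(\cV^\circ+\cK_A^\circ)\cap\cS=(\cV^\circ\cap\cS)+\cK_A^\circ$: though the verification is short, it hinges entirely on the non-obvious containment $\cK_A^\circ\subset\cS$ from Proposition~\ref{prop:K_A}(b), and it is precisely this structural fact that enables the cleaner gauge representation in (b).
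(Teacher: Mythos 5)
Your proof is correct and follows essentially the same route as the paper: Theorem~\ref{th:indicatorConjugateGeneral} plus the identification $AY=0\iff YY^T\in\cS$, additivity of support functions, Lemma~\ref{lem:polar sum} for the polar of the Minkowski sum, and the extraction $(\cV^\circ+\cK_A^\circ)\cap\cS=(\cV^\circ\cap\cS)+\cK_A^\circ$ via $\cK_A^\circ\subset\cS$. You in fact supply details the paper omits (the domain formula via $\cone[(\cV^\circ\cap\cS)+\cK_A^\circ]=\cone(\cV^\circ\cap\cS)+\cK_A^\circ$ and the parenthetical closedness criterion via $(\cV^\circ)^\infty=(\cone\cV)^\circ$), and these are all sound.
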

\begin{proof}
(a) By Theorem \ref{th:indicatorConjugateGeneral}, we have
\[
\begin{aligned}
p^*(Y) =& 
\half\support{YY^T}{\cV\cap\cK_A}+\delta_{\set{Z}{AZ=0}}(Y)
\\ =&
\half\support{YY^T}{\cV\cap\cK_A}+\frac{1}{2}\indicator{YY^T}{\cS}
\\ =&
\half\support{YY^T}{\cV\cap\cK_A}+\half\support{YY^T}{\cS^\perp}
\\ =&
\half\support{YY^T}{(\cV\cap\cK_A)+\cS^\perp}
\\ =&
\half\gauge{YY^T}{(\cV\cap\cK_A)^\circ \cap\cS}.
\end{aligned}
\]
Here the first equality uses Theorem \ref{th:indicatorConjugateGeneral},  the second equality follows from the fact that
$\rge Y=\rge YY^T$, the third can be seen from \cite[Example 7.4]{RoW98}, and the final equivalence follows from \cite[Theorem 14.5]{RTR70} and Lemma \ref{lem:polar sum}.

\smallskip

\noindent
(b) If $\cV^\circ+\cK_A^\circ$ is closed, then Lemma \ref{lem:polar sum}
also tells us that $(\cV\cap\cK_A)^\circ=\cV^\circ+\cK_A^\circ$. Since $\cK_A^\circ\subset \cS$, see Lemma \ref{prop:K_A} (b), we have
\(
(\cV^\circ+\cK_A^\circ)\cap\cS=(\cV^\circ\cap\cS)+\cK_A^\circ
\)
which, using (a), gives the first equivalence in \eqref{eq:gauge dist}. 
%The second 
%follows from the first one using  Theorem \ref{th:gauge dist to cones} a).
\end{proof}

\noindent
Our final goal is to show that  $p$, under the standing assumption in this section, is a squared gauge. 
%To this end, the next result is key.  
%Here, recall that, for some gauge function $\gamma:\cE\to\eR$, its {\em polar gauge} $\gamma^\circ:\cE\to\eR$ is given by
%\[
%\gamma^\circ(y):=\inf\set{\mu\geq 0}{\ip{x}{y}\le\mu\gamma(x)\; (x\in\cE)},
%\]
%see \cite[Section 15]{RTR70}.
%
%\begin{lemma}\label{lem:squared gauge conjugate}  Let $0\in C\subset \cE$ be  closed and  convex  and define $q:\cE\to\eR$ through
%$q(x):=\frac{1}{2}\gamma_C^2(x).$ Then 
%$
%q^*=\frac{1}{2}\gamma_{C^\circ}^2.
% $
%\end{lemma}
%\begin{proof} Apply \cite[Proposition 11.21]{RoW98} with $\theta=\frac{1}{2}(\cdot)^2$.
%%Apply \cite[Theorem 15.3]{RTR70} with $g=\frac{1}{2}(\cdot)^2$ and $ k=\gamma_C$ noticing that that the {\em monotone conjugate} (see \cite[p.~111]{RTR70}) of $g$ is given by $g^+(s)=\left\{\begin{array}{rl}
%%g(s), &s\geq 0,\\ 0,& s<0.\end{array}\right.$ 
%\end{proof}
%
%\noindent
%We are now in a position to prove the last  result of this section  announced earlier. 
Here we denote by $\bB_F$ the (closed) unit ball in the Frobenius norm. 

\begin{corollary}[The gauge case II]\label{cor:Gauge case II}
Let $p$ be as in Theorem \ref{th:indicatorConjugateGeneral} with $0\in\cV$ and $B=0$,
and assume that \eqref{eq:nonempty domain 2} holds.
Let $P\in \R^{n\times n}$ be the orthogonal projector on $\ker A$ and define the (closed, convex) sets
\[
\hcVp:=\set{L\in \R^{n\times n}}{LL^T\in P(\cV\cap\cK_A)P},
\quad
\cF:=\set{LZ}{L\in \hcVp,\; Z\in \bB_F},
\]
and the subspace 
$
\cU:=\Ker_mA.\footnote{Hence  $\cU^\perp=\Rge_mA^T$.}
%\set{Y\in \R^{n\times m\!}}{\!\rge Y\subset \ker A}
$
Then 
\[
p=\frac{1}{2}\gamma^2_{\cF+\cU^\perp}\AND p^*=\frac{1}{2}\gamma^2_{\cF^\circ\cap\cU}.
\]
In particular, for $A=0$ and $\cF:=\set{LZ}{LL^T\in \cV\cap \bS^n_+,\; Z\in\bB_F}$ we obtain
\[
p=\half \gamma^2_\cF\AND p^*=\gamma^2_{\cF^\circ}.
\]
\end{corollary}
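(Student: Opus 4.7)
The plan is to compute $p^*$ explicitly, recognize it as a squared gauge, and then obtain $p$ by Fenchel biconjugation via the classical duality $(\half\gamma_D^2)^* = \half\gamma_{D^\circ}^2$ valid for any closed convex $D \ni 0$.

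Starting from Corollary~\ref{cor:Gauge case I}(a), and observing that $\sigma_{\cS^\perp}(YY^T) = 0$ iff $YY^T \in \cS$ iff $\rge Y \subset \ker A$ iff $Y \in \cU = \Ker_m A$, one first reads off
\[
p^*(Y) = \half\,\support{YY^T}{\cV\cap\cK_A} + \delta_{\cU}(Y).
\]
For $Y \in \cU$ (so $PY = Y$), the identity $\tr(Y^T V Y) = \tr(Y^T\,PVP\,Y)$ combined with the fact that $PVP \succeq 0$ for $V \in \cK_A$ (Proposition~\ref{prop:K_A}(a)) permits the parametrization $P(\cV\cap\cK_A)P = \{LL^T : L \in \hcVp\}$, whence
\[
\support{YY^T}{\cV\cap\cK_A} = \sup_{L \in \hcVp}\|L^T Y\|_F^2 = \sigma_{\cF}(Y)^2,
\]
the last equality following from the direct computation $\sigma_{\cF}(Y) = \sup_{L \in \hcVp}\sup_{\|Z\|_F \le 1}\ip{LZ}{Y} = \sup_{L}\|L^T Y\|_F$. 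Since $0 \in \cV$ forces $0 \in \cF$, one has $\sigma_{\cF} = \gamma_{\cF^\circ}$; and because $\cU$ is a subspace, $\gamma_{\cF^\circ\cap\cU}^2 = \gamma_{\cF^\circ}^2 + \delta_{\cU}$ (as $\gamma_{\cF^\circ\cap\cU}(Y)$ equals $\gamma_{\cF^\circ}(Y)$ on $\cU$ and $+\infty$ elsewhere). Consequently, $p^* = \half\,\gamma_{\cF^\circ\cap\cU}^2$.

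Applying the squared-gauge duality to $D = \cF^\circ \cap \cU$, which is closed convex and contains $0$, together with $p = p^{**}$ (valid under \eqref{eq:nonempty domain 2} and the closedness results of Section~\ref{sec:Indicator}), yields $p = \half\,\gamma_{(\cF^\circ\cap\cU)^\circ}^2$. Lemma~\ref{lem:polar sum} with $C = \cF$ and $K = \cU^\perp$ then gives $(\cF + \cU^\perp)^\circ = \cF^\circ \cap \cU$, and the bipolar theorem produces $(\cF^\circ\cap\cU)^\circ = \cl(\cF + \cU^\perp)$, matching the stated formula. The $A = 0$ case is an immediate specialization via $P = I$, $\cU = \Rnm$, $\cU^\perp = \{0\}$.

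The main obstacle will be establishing the convexity of $\cF$, needed to apply Lemma~\ref{lem:polar sum} cleanly and to legitimately drop the closure in the final gauge. I would prove this by an augment-and-reduce PSD factorization argument: given $X_i = L_i Z_i \in \cF$ and $\alpha \in [0,1]$, the wide matrices $\tilde L = [\sqrt{\alpha}\,L_1,\ \sqrt{1-\alpha}\,L_2]$ and $\tilde Z = \left(\begin{smallmatrix}\sqrt{\alpha}\,Z_1 \\ \sqrt{1-\alpha}\,Z_2\end{smallmatrix}\right)$ satisfy $\tilde L\tilde Z = \alpha X_1 + (1-\alpha)X_2$, $\tilde L\tilde L^T \in P(\cV\cap\cK_A)P$, and $\|\tilde Z\|_F \le 1$; then setting $L := (\tilde L\tilde L^T)^{1/2}$ and $Z := L^\dagger \tilde L\tilde Z$ delivers $L \in \hcVp$ with $LZ = \tilde L\tilde Z$ (since $\rge\tilde L \subset \rge L$) and $\|Z\|_F \le \|\tilde Z\|_F$ (because $\tilde L^T(\tilde L\tilde L^T)^\dagger \tilde L$ is an orthogonal projection and hence $\preceq I$). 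Closedness of $\cF + \cU^\perp$ can then be handled via Lemma~\ref{lem:polar sum} (for instance when $\cU^\perp \cap (-\cF^\infty) = \{0\}$), or sidestepped by working with $\clco(\cF + \cU^\perp)$ throughout.
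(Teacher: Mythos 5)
Your proposal is correct and follows essentially the same route as the paper's proof: compute $p^*$ via Theorem \ref{th:indicatorConjugateGeneral}, rewrite $\sigma_{\cV\cap\cK_A}(YY^T)$ as $\sup_{L\in\hcVp}\|L^TY\|_F^2=\sigma_{\cF}(Y)^2$ using the factorization $P(\cV\cap\cK_A)P=\{LL^T: L\in\hcVp\}$ and the self-duality of the Frobenius norm, absorb $\delta_{\cU}$ into the squared gauge, and recover $p$ from the squared-gauge conjugacy of \cite[Corollary 15.3.1]{RTR70}. Your augment-and-reduce PSD factorization establishing convexity of $\cF$ is a genuine addition worth keeping: the paper only asserts parenthetically in the statement that $\hcVp$ and $\cF$ are closed and convex and its proof never verifies this, nor does it address the closedness of $\cF+\cU^\perp$ needed to drop the closure in $(\cF^\circ\cap\cU)^\circ=\cl(\cF+\cU^\perp)$, so your treatment fills a real gap.
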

\begin{proof} For all $Y\in \R^{n\times m}$, by Theorem \ref{th:indicatorConjugateGeneral}  and the definition of $\cU$, we have
\[
p^*(Y) =\frac{1}{2}\sigma_{\cV\cap\cK_A}(YY^T)+\delta_\cU(Y)= \half \sup_{V\in\cV\cap \cK_A}\ip{PVP}{YY^T}+\delta_\cU(Y).
\]
In turn, by the definitions of $\hcVp$ and the  Frobenius norm, the latter equals
\[
 \half \sup_{L\in \hcVp}\ip{LL^T}{YY^T}+\delta_\cU(Y)=\half \sup_{L\in\hcVp}\|L^TY\|_F^2+\delta_\cU(Y).
 \]
On the other hand, by the monotonicity and continuity of $t\in \R_+\mapsto t^2$ as well as the  self-duality of the Frobenius norm, we find that the second term can be written as
\[
 \half \left[\sup_{L\in\hcVp}\|L^TY\|_F\right]^2+\delta_\cU(Y)= \half \left[\sup_{(Z,L)\in \bB_F\times\hcVp}\ip{L^TY}{Z}\right]^2+\delta_\cU(Y).
\]
Using the definition of $\cF$ and the convention  $(+\infty)^2=+\infty$, we can rewrite 
this equivalence as 
\( 
\half\sigma_{\cF}(Y)^2+\delta_{\cU}(Y)= \half\left[\sigma_{\cF}(Y)+\delta_{\cU}(Y)\right]^2.
\)
All in all, using the latter,   \cite[Example 11.4]{RoW98}, and \cite[Example 11.19]{RoW98} and the  polar cone calculus from, e.g., \cite[p.~70]{BoL 00}, we conclude that
\[
p^*(Y) \!=\! \half\left[\sigma_{\cF}(Y)+\delta_{\cU}(Y)\right]^2 \!=\! \half\left[\sigma_{\cF}(Y)+\sigma_{\cU^\perp}(Y)\right]^2 \!=\! \half \sigma^2_{\cF+\cU^\perp}(Y) \!=\! \half\gamma^2_{\cF^\circ\cap\cU}(Y).
\]
%\begin{eqnarray*}
%p^*(Y) & = & \frac{1}{2}\sigma_{\cV\cap\cK_A}(YY^T)+\delta_\cU(Y)\\
%& = & \half\sup_{V\in \cV\cap \cK_A}\ip{V}{YY^T}+\delta_\cU(Y)\\
%&= & \half \sup_{V\in\cV\cap \cK_A}\ip{PVP}{YY^T}+\delta_\cU(Y)\\
%& = & \half \sup_{L\in \hcVp}\ip{LL^T}{YY^T}+\delta_\cU(Y)\\
%& = & \half \sup_{L\in\hcVp}\|L^TY\|_F^2+\delta_\cU(Y)\\
% & = & \half \left[\sup_{L\in\hcVp}\|L^TY\|_F\right]^2+\delta_\cU(Y)\\
%& = & \half \left[\sup_{(Z,L)\in \bB_F\times\hcVp}\ip{L^TY}{Z}\right]^2+\delta_\cU(Y)\\
%& = & \half\sigma_{\cF}(Y)^2+\delta_{\cU}(Y)\\
%& = & \half\left[\sigma_{\cF}(Y)+\delta_{\cU}(Y)\right]^2\\
%& = & \half\left[\sigma_{\cF}(Y)+\sigma_{\cU^\perp}(Y)\right]^2\\
%&= & \half \sigma^2_{\cF+\cU^\perp}(Y)\\
%&= & \half\gamma^2_{\cF^\circ\cap\cU}(Y)
%\end{eqnarray*}
%Here the first identity uses Theorem \ref{th:indicatorConjugateGeneral}, the second one is the definition of a support function, the third one is due to the definition of $\cU$ and of an indicator function. The fourth identity uses the definition of $\hcVp$,  the fifth one is simply the definition of the Frobenius norm, while the sixth uses the monotonicity and continuity of $t\in \R_+\mapsto t^2$. For the  seventh identity observe that the Frobenius norm is self-dual. The eighth identity is simply the definition of $\cF$, while the ninth uses the  convention $(+\infty)^2=+\infty$.  The tenth  identity uses \cite[Example 11.4]{RoW98}. The last before last identity is  due to \eqref{eq:Support Additivity}, while the last one uses \cite[Example 11.19]{RoW98} and the  polar cone calculus from e.g. \cite[p.~70]{BoL 00}. 
This gives the representation for  $p^*$; the one for $p$ follows from 
\cite[Corollary 15.3.1]{RTR70}.
%Lemma \ref{lem:squared gauge conjugate}.
\end{proof}

%\section{Applications}\label{sec:App}

\subsection{Variational Gram Functions} \label{sec:VGF}

Given a  closed,  convex set  $\cV\subset \bS^n$ define 
\begin{equation}\label{eq:VGF}
\Phi_{\cV}:\R^{n\times m}\to \rbar, \quad 
\Phi_\cV(Y):=\frac{1}{2}\sigma_{\cV\cap \bS^n_+}(YY^T).
\end{equation}
These functions are called {\em variational Gram functions (VGF)} and
were introduced by Jalali, Fazel and Xiao \cite{JFX17}.
They have received attention in the machine learning community due to their orthogonality promoting properties when used as penalty functions, cf. \cite{JFX17}.  

Note that the definition \eqref{eq:VGF} explicitly intersects $\cV$ 
with the positive semidefinite cone $\bS^n_+$ 
while Jalali, Fazel and Xiao %in the analysis appearing in 
\cite{JFX17} employ the standing assumption that 
$\Phi_\cV=\Phi_{\cV\cap \bS^n_+}$. These (equivalent) conventions guarantee that $\Phi_\cV$ is convex.  We also scale by $\frac{1}{2}$ since $\Phi_{\cV}$
is positively homogeneous of degree 2.

As an immediate consequence of Theorem \ref{th:indicatorConjugateGeneral}, 
$\Phi_\cV=p^*$ where  
$p$ is defined in \eqref{eq:p indicator} 
with $A=0$, $B=0$ and $\cV\cap\Sn_+\ne\emptyset$.
In addition, the constraint qualifications dramatically simplify in this case.
We have already seen in Corollary \ref{cor:Indicator PCQ} that PCQ, SPCQ and BPCQ are
all equivalent for VGFs.  
We now observe that CCQ and SCCQ are also equivalent.

\begin{lemma}[CCQ$=$SCCQ for VGFs]\label{lem:ccq=sccq}
Let $\Phi_{\cV}$ be given by \eqref{eq:VGF} with $\cV\subset \bS^n$.
Then the condition $\cV\cap \bS^n_+\ne\emptyset$ is equivalent to
\eqref{eq:nonempty domain 2}, and  
\eqref{eq:indicator sccq} is satisfied with $\Phi_{\cV}=p^*$
where $A=0$, $B=0$ and $p$ defined in \eqref{eq:p indicator}.
In particular, CCQ and SCCQ are equivalent where CCQ is given by
$\cV\cap \bS^n_{++}\ne\emptyset$.
\end{lemma}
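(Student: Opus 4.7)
My plan is a direct unpacking of definitions in the specialized setting $A=0$, $B=0$, at which point every piece of the lemma reduces to a short observation.

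First, I would record the geometric reductions. Since $\ker A=\R^n$, Proposition \ref{prop:K_A} yields $\cK_A=\bS^n_+$, $\cK_A^\circ=\bS^n_-$, and $\inter \cK_A=\bS^n_{++}$, so that CCQ for $p$ (namely $\cV\cap\inter\cK_A\ne\emptyset$) collapses exactly to $\cV\cap\bS^n_{++}\ne\emptyset$ as claimed in the final clause. Similarly $M(V)=\left(\begin{smallmatrix}V&0\\0&0\end{smallmatrix}\right)$, hence $\rge M(V)=\rge V\times\{0\}$, and the range condition $\rge\binom{X}{0}\subset\rge M(V)$ reduces to $\rge X\subset \rge V$.

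Second, I would settle the first two assertions of the lemma. The reduced range condition is trivially realized by $X=0$ for any admissible $V$, so the set on the right of \eqref{eq:nonempty domain 2} is nonempty if and only if $\cV\cap\cK_A=\cV\cap\bS^n_+\ne\emptyset$. Once this equivalence is in hand, Theorem \ref{th:indicatorConjugateGeneral} immediately produces $p^*(Y)=\tfrac12\sigma_{\cV\cap\bS^n_+}(YY^T)=\Phi_\cV(Y)$, because the indicator term $\delta_{\{Z\,:\,AZ=B\}}$ vanishes identically when $A=0$ and $B=0$.

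Third, I would show \eqref{eq:indicator sccq} is automatic. With $A=B=0$, Theorem \ref{th:clco D} gives $\Omega(0,0)=\set{(Y,W)\in\bE}{\tfrac12 YY^T+W\in\bS^n_-}$, so the requirement $(Y,0)\in\Omega(0,0)+(\{0\}\times\barr\cV)$ amounts to finding $W\in\barr\cV$ with $W\succeq\tfrac12 YY^T$. Taking $Y=0$ and $W=0$ works, since $\sigma_\cV(0)=0$ for every nonempty $\cV$, whence $0\in\dom\sigma_\cV=\barr\cV$. Therefore $0\in\Xi(0,0)$ unconditionally, and the supplementary condition in Definition \ref{def:CQ}(v) is free, so SCCQ degenerates to CCQ.

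I do not foresee any real obstacle, since each step is a brief specialization; the one observation worth flagging explicitly is that $0$ always lies in the barrier cone of any nonempty convex set, which is exactly what causes the extra condition in SCCQ to disappear in the VGF setting.
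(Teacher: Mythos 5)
Your proposal is correct and follows essentially the same route as the paper: the crux in both is that $0\in\barr\cV$ for any nonempty $\cV$, so $0\in\Xi(0,0)$ and the extra condition in SCCQ is automatic, while the identification $\Phi_\cV=p^*$ comes from Theorem \ref{th:indicatorConjugateGeneral}. You merely spell out the specializations ($\cK_0=\bS^n_+$, the range condition realized by $X=0$) that the paper leaves implicit.
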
 
\begin{proof}
First note that %if $\cV\cap \bS^n_+\ne\emptyset$, then 
\(
0\in \Xi(0,0)=\set{Y}{\exists\, W\in \barr \cV\, :
\, \half YY^T\preceq W}
\)
since 
$0\in \barr \cV$.
%Consequently, $0\in \Xi(0,0)=\set{Y}{\exists\, W\in\Sn_+\, :\, \half YY^T\preceq W}$.
The relationship between $\Phi_\cV$ and $p$ is given in
Theorem \ref{th:indicatorConjugateGeneral}.
\end{proof}

Lemma \ref{lem:ccq=sccq} and the results of the previous section allow us to refine 
\cite[Proposition 4]{JFX17}. 
%which only establishes part (b) for the .

\begin{proposition}[Conjugate of VGFs and VGFs as Squared Gauges]\label{prop:VGF conjugate} Let $\Phi_\cV$ be given by \eqref{eq:VGF}. 
Under either of the assumptions
		\begin{itemize}
		\item[(i)]  (CCQ) $\cV\cap \bS^n_{++}\neq \emptyset$,
		\item[(ii)] (PCQ) $\cV\cap \bS^n_{+}\neq \emptyset$ is bounded 
		(or equivalently $\bS_{-}^n+\barr \cV=\bS^n$),
	\end{itemize}
we have 
\[
\Phi_{\cV}^*(X)= \inf_{V} \sigma_{\Omega(0,0)}(X,V)+\delta_\cV(V)= \frac{1}{2}\inf_{\begin{smallmatrix}V\in \cV\cap \bS^n_+:\\ \rge X\subset \rge V\end{smallmatrix}} \tr\left(X^TV^\dagger X\right)\quad (X\in \R^{n\times m}).
\]
Under (i), $\Phi_{\cV}^*$ is finite-valued, and under (ii), $\Phi_{\cV}$ is finite-valued.  In addition, if $0\in\cV$ we also have 
\[
\Phi_{\cV}=\frac{1}{2}\gamma_{\cF^\circ}^2
\AND \Phi_\cV^*=\frac{1}{2}\gamma_\cF^2
\]
with $\cF=\set{LZ}{LL^T\in \cV\cap \bS_+^n,\; Z\in \bB_F}$.
\end{proposition}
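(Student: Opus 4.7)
The plan is to identify $\Phi_\cV$ as the conjugate of the infimal projection $p$ from \eqref{eq:p indicator} taken with $A=0$ and $B=0$, and then bootstrap the statement from biconjugation. First, I would verify that condition \eqref{eq:nonempty domain 2} is satisfied whenever $\cV\cap\Snp\ne\emptyset$, since for any $V\in\cV\cap\Snp$ and $X=0$ the range containment $\rge\binom{X}{B}\subset\rge M(V)$ is trivial (as $B=0$). Theorem \ref{th:indicatorConjugateGeneral} then yields $p^*=\Phi_\cV$. Consequently, once $p\in\Gamma_0(\Rnm)$ is established, biconjugation gives $\Phi_\cV^*=p^{**}=p$, and the explicit formula $\half \inf\tr(X^TV^\dagger X)$ falls out by inserting the closed form \eqref{eq:DefPhi} for $\varphi$: with $A=0$, $B=0$, the bordered matrix $M(V)$ becomes block-diagonal with trivial lower block, $M(V)^\dagger$ reduces to $V^\dagger$ on the top-left block, and $\cK_A$ collapses to $\Snp$.

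Next, I would verify $p\in\Gamma_0(\Rnm)$ under each of the two hypotheses. Under (i), Lemma \ref{lem:ccq=sccq} tells us CCQ and SCCQ coincide for VGFs, so Corollary \ref{cor:p=p^**}(i) delivers $p\in\Gamma_0$; moreover, Theorem \ref{th:inf-proj psi}(c.II) shows $\dom p=\Rnm$ with $p$ finite-valued (the alternative $p\equiv-\infty$ is ruled out by properness). Under (ii), Corollary \ref{cor:Indicator PCQ} shows that boundedness of $\cV\cap\Snp$ is equivalent to BPCQ and to $\Sn_-+\barr\cV=\Sn$, which via Corollary \ref{cor:p=p^**}(iii) gives $p\in\Gamma_0$. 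For the finite-valuedness claims, the case of $\Phi_\cV^*=p$ under (i) is handled above; for $\Phi_\cV$ under (ii), boundedness of $\cV\cap\Snp$ directly makes $\sigma_{\cV\cap\Snp}$ everywhere finite, so $\Phi_\cV(Y)=\half\sigma_{\cV\cap\Snp}(YY^T)$ is finite for every $Y$.

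For the final statement, I would invoke Corollary \ref{cor:Gauge case II} specialized to $A=0$, $B=0$, noting that in this case $\cU=\Ker_m A = \Rnm$, so $\cU^\perp=\{0\}$ and $\cF^\circ\cap\cU=\cF^\circ$. The general formulas there become $p=\half\gamma^2_\cF$ and $p^*=\half\gamma^2_{\cF^\circ}$. Translating back via $\Phi_\cV=p^*$ and $\Phi_\cV^*=p$ yields the claimed identities.

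The main obstacle here is not a novel argument but rather carefully threading together the prior constraint qualification results. In particular, one must recognize that BPCQ, SPCQ, and PCQ collapse to a single condition in the $A=0$, $B=0$ setting (by Corollary \ref{cor:Indicator PCQ}), and that CCQ and SCCQ similarly coincide for VGFs (by Lemma \ref{lem:ccq=sccq}). Once these identifications are in place and $p\in\Gamma_0$ is secured, the entire proposition reduces to the biconjugation machinery already developed.
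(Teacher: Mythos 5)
Your proposal is correct and follows essentially the same route as the paper: identify $\Phi_\cV=p^*$ via Theorem \ref{th:indicatorConjugateGeneral}, secure $p\in\Gamma_0(\Rnm)$ through the constraint-qualification equivalences (Lemma \ref{lem:ccq=sccq}, Corollary \ref{cor:Indicator PCQ}, Corollary \ref{cor:p=p^**}), biconjugate to get $\Phi_\cV^*=p$, and invoke \eqref{eq:DefPhi} and Corollary \ref{cor:Gauge case II} for the explicit formula and the squared-gauge representations. You merely spell out the intermediate verifications (nonemptiness of the domain in \eqref{eq:nonempty domain 2}, the two finite-valuedness claims) that the paper's terse proof leaves implicit.
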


\begin{proof}  
Lemma \ref{lem:ccq=sccq} tells us that assumption (i) is equivalent to SCCQ, and 
Corollary \ref{cor:Indicator PCQ} tells us that assumption (ii) is equivalent to BPCQ. 
Hence, by
Theorem \ref{th:indicatorConjugateGeneral}, either assumption (i) or (ii)
implies that $\Phi_{\cV}^* =p^{**}=p$.
The remainder is now follows from the definition of $p$, 
equation \eqref{eq:DefPhi}, and Corollary \ref{cor:Gauge case II}.
\end{proof}

 \noindent
Next consider the subdifferential of a VGF when defined by \eqref{eq:VGF}. Although, 
a VGF is always convex, we take the {\em convex-composite}  perspective, see e.g. \cite{BuP 92}, since a VGF is simply the composition of a closed, proper, convex function $\sigma_{\cV\cap \bS^n_+}$ and a nonlinear map $H:Y\mapsto YY^T$.  
The {\em basic constraint qualification} for 
the composition $\Phi_{\cV}=\frac{1}{2}\sigma_{\cV\cap \bS^n_+}\circ H$
at a point $\bar Y\in \dom \Phi_{\cV}$
is given by 
 \begin{equation*}%\label{eq:BCQ VGF}\tag{BCQ}
 \!\!\! \!\!\! \!\!\!\mbox{(BCQ)}\qquad\qquad
 N_{\dom \sigma_{\cV\cap \bS^n_+}}(\bar Y\bar Y^T)\cap  (\Ker_n \bar Y^T)=\{0\}.
\end{equation*}
It is well-known that this condition is essential for a full subdifferential calculus of convex-composite functions \cite{RoW98}.  We now show that this condition
is intimately linked to condition (ii) in Corollary \ref{cor:Indicator PCQ}.
 % (i.e.  PCQ  for the case $h=\delta_\cV$ and $A=0$ and $B=0$). 

\begin{lemma}[BPCQ$=$PCQ$=$BCQ for VGFs]\label{lem:BCQ VGF} Let $\Phi_{\cV}$ be as in  \eqref{eq:VGF} and assume that $\bS^n_+\cap \cV\neq \emptyset$. Then the following are equivalent:
\begin{itemize}
\item[(i)] There exists  $\bar Y\in \dom \Phi_{\cV}$ such that BCQ holds;
\item[(ii)] ((B)PCQ)  $\mathcal{V}^\infty \cap \bS^n_{+}= \{0\}$ (or equivalently $\cV\cap \bS^n_+$ is bounded);
\item[(iii)] BCQ holds at every $\bar Y\in \dom \Phi_{\cV}$.
\end{itemize} 
\end{lemma}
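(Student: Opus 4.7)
The plan is to show that BCQ at \emph{any} $\bar Y \in \dom \Phi_\cV$ is already equivalent to (ii). Since $\Phi_\cV(0)=0$ and hence $0\in \dom \Phi_\cV$, this single pointwise equivalence delivers the full triple (i)$\iff$(ii)$\iff$(iii) at once. The argument hinges on (a) an explicit formula for the normal cone appearing in BCQ and (b) an automatic orthogonality between $\bar Y \bar Y^T$ and any $W\in \cV^\infty \cap \Sn_+$ whenever $\bar Y\in\dom \Phi_\cV$.

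First I would compute $N_{\barr(\cV \cap \Sn_+)}(\bar Y \bar Y^T)$. Since $\sigma_{\cV\cap \Sn_+}$ is sublinear, $\barr(\cV\cap \Sn_+)=\dom \sigma_{\cV\cap \Sn_+}$ is a convex cone, and $\bar Y \bar Y^T$ belongs to it precisely because $\bar Y\in\dom \Phi_\cV$. Combining \eqref{eq:BarrierPolarity} with the standard identity $(\cV\cap \Sn_+)^\infty=\cV^\infty\cap \Sn_+$ (valid since $\cV\cap \Sn_+\ne\emptyset$ and $\Sn_+$ is a closed convex cone), one obtains $(\barr(\cV\cap \Sn_+))^\circ=\cV^\infty \cap \Sn_+$. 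Using $N_K(x)=K^\circ\cap x^\perp$ for a convex cone $K\ni x$ then yields
\[
N_{\barr(\cV\cap \Sn_+)}(\bar Y \bar Y^T)=(\cV^\infty \cap \Sn_+)\cap(\bar Y \bar Y^T)^\perp.
\]
For $W\in \Sn_+$, the inner-product condition $\langle W,\bar Y \bar Y^T\rangle=\|W^{1/2}\bar Y\|_F^2=0$ is equivalent to $W\bar Y=0$, i.e.\ $W\in \Ker_n \bar Y^T$. So the above normal cone already lies inside $\Ker_n \bar Y^T$, and BCQ at $\bar Y$ collapses to
\[
\{W \in \cV^\infty \cap \Sn_+\,:\,W\bar Y=0\}=\{0\}.
\]

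The main substantive step, and the one I expect to carry the argument, is to show that whenever $\bar Y\in\dom\Phi_\cV$ and $W\in \cV^\infty\cap \Sn_+$ one automatically has $W\bar Y=0$. Pick any $V\in \cV\cap \Sn_+$ (nonempty by hypothesis); since $W\in (\cV\cap \Sn_+)^\infty$, $V+tW\in \cV\cap \Sn_+$ for all $t\ge 0$, whence
\[
+\infty>\sigma_{\cV\cap \Sn_+}(\bar Y\bar Y^T)\ge \langle V,\bar Y\bar Y^T\rangle+t\langle W,\bar Y\bar Y^T\rangle\quad (t\ge 0).
\]
Letting $t\to\infty$ forces $\langle W,\bar Y\bar Y^T\rangle\le 0$, and combined with the PSD-ness of both factors this yields $\langle W,\bar Y\bar Y^T\rangle=0$, i.e.\ $W\bar Y=0$.

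Putting these ingredients together, BCQ at \emph{every} $\bar Y\in\dom \Phi_\cV$ is equivalent to the $\bar Y$-free condition $\cV^\infty \cap \Sn_+=\{0\}$, which is (ii) (the boundedness of $\cV\cap \Sn_+$ follows from Corollary \ref{cor:Indicator PCQ}). Since $0\in \dom \Phi_\cV$, this simultaneously produces (iii)$\Rightarrow$(i), (i)$\Rightarrow$(ii), and (ii)$\Rightarrow$(iii). The only delicate point is the normal-cone formula on the possibly non-closed convex cone $\barr(\cV\cap \Sn_+)$, but this is immediate from the invariance of polarity under closure.
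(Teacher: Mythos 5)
Your proof is correct and rests on the same key computation as the paper's: the ray argument $V+tW\in\cV\cap\bS^n_+$ forcing $\ip{W}{\bar Y\bar Y^T}=0$ and hence $W\bar Y=0$ for every $W\in\cV^\infty\cap\bS^n_+$ and $\bar Y\in\dom\Phi_\cV$, so that the $\bar Y$-dependence in BCQ disappears. The only difference is organizational (and a mild improvement): you prove the single pointwise biconditional ``BCQ at $\bar Y$ $\iff$ $\cV^\infty\cap\bS^n_+=\{0\}$'' using the exact normal-cone identity $N_{\barr(\cV\cap\bS^n_+)}(\bar Y\bar Y^T)=(\cV^\infty\cap\bS^n_+)\cap(\bar Y\bar Y^T)^\perp$, whereas the paper runs a cycle (i)$\Rightarrow$(ii)$\Rightarrow$(iii)$\Rightarrow$(i) and states the normal cone a bit loosely as all of $(\cV\cap\bS^n_+)^\infty$, handling (ii)$\Rightarrow$(iii) separately via $\dom\sigma_{\cV\cap\bS^n_+}=\bS^n$ in the bounded case.
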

\begin{proof}  '(i)$\Rightarrow$(ii)':    
Let $\bar V\in \bS^n_+\cap \cV$ and assume (ii) is violated, i.e. there exists $0\neq W\in (\cV\cap \bS^n_+)^\infty=\cV^\infty\cap \bS^n_+$. 
%Moreover, by assumption there exists $\bar V\in \bS^n_+\cap \cV$. 
By  %the properties of the horizon cone of   closed, convex sets,  see  
\eqref{eq:horizon recession}, we have 
\begin{equation}\label{eq:V_t}
V_t:=\bar V+tW \in \cV\cap \bS^n_+\quad (t>0).
\end{equation}
Now, take any $\bar Y\in \dom \Phi_{\cV}$. Then, for all $t>0$, we have
\begin{eqnarray*}
+\infty &  > & \Phi_{\cV}(\bar Y)\\
& = & \sup_{V\in \bS^n_+\cap \cV} \ip{V}{\bar Y\bar Y^T}\\
& \geq & \ip{V_t}{\bar Y\bar Y^T}\\
& \geq & t\ip{W}{\bar Y\bar Y^T}.
\end{eqnarray*}
Since $W\succeq 0$, we have $\ip{\bar Y\bar Y^T}{W}=\tr(\bar Y^TW\bar Y)\geq 0$. In view of the above chain of inequalities this implies $\ip{W}{\bar Y\bar Y^T}=0$ and as 
$W, \;\bar Y\bar Y^T\in \Sn_+$ this gives $W\bar Y\bar Y^T=0$. Since $ \rge \bar Y=\rge \bar Y\bar Y^T$ this implies $W\bar Y=0$ or, equivalently, $\bar Y^TW=0$. 
Therefore, we have 
$0\neq W\in (\cV\cap \bS^n_+)^\infty \cap (\Ker_n \bar Y^T)$. 
Now, observe  that $N_{\dom \sigma_{\cV\cap \bS^n_+}}(Z)=(\cV\cap \bS^n_+)^\infty$ for any $Z\in \dom \sigma_{\cV\cap \bS^n_+}$, see e.g. \cite{RoW98}. This shows that BCQ is violated at $\bar Y$. Since $\bar Y\in \dom \Phi_{\cV}$ was chosen arbitrarily, this establishes the desired implication.

\noindent 
'(ii)$\Rightarrow$(iii)':  If $\cV\cap \bS^n_+$ is bounded, then $\dom \sigma_{\cV\cap \bS^n_+}=\bS^n$, and so, for every $\bar Y \in \dom \Phi_{\cV}$,
$N_{\dom \sigma_{\cV\cap \bS^n_+}}(\bar Y\bar Y^T)=\{0\}$ giving the desired implication.
\smallskip

\noindent 
'(iii)$\Rightarrow$(i)': Obvious.  
\end{proof}

\noindent
We now derive the formula for the subdifferential of the VGF from \eqref{eq:VGF}. 

\begin{proposition}\label{prop:VGF subdiff} Let $\Phi_{\cV}$ be given by \eqref{eq:VGF}.  Then 
\[
\p \Phi_{\cV}(\bar Y)\supset \set{\bar V\bar Y }{\bar V\in \cV\cap \bS^n_+:\;\ip{\bar V}{\bar Y\bar Y^T}=\Phi_{\cV}(\bar Y)}\quad(\bar Y\in \dom \Phi_{\cV}).
\]
If  $\bS^n_+\cap\cV$ is nonempty and bounded,  equality holds and $\dom \Phi_{\cV}=\R^{n\times m}$.
\end{proposition}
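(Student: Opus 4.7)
I would organize the proof in two parts: the claimed inclusion holds in full generality, while the reverse inclusion and the full-domain claim require the boundedness hypothesis.

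For the inclusion, fix $\bar Y\in\dom\Phi_\cV$ and $\bar V\in\cV\cap\Sn_+$ satisfying the stated optimality condition, so that $\bar V$ attains the supremum defining $\sig_{\cV\cap\Sn_+}(\bar Y\bar Y^T)=2\Phi_\cV(\bar Y)$. The subgradient inequality $\Phi_\cV(Y)\ge \Phi_\cV(\bar Y)+\ip{\bar V\bar Y}{Y-\bar Y}$ can be verified from first principles: by the definition of the support function, $\Phi_\cV(Y)\ge \tfrac12\ip{\bar V}{YY^T}$ for every $Y\in\Rnm$, so
\[
\Phi_\cV(Y)-\Phi_\cV(\bar Y)-\ip{\bar V\bar Y}{Y-\bar Y}\ \ge\ \tfrac12\ip{\bar V}{(Y-\bar Y)(Y-\bar Y)^T}\ \ge\ 0,
\]
where the key algebraic step is the identity $\tfrac12 YY^T-Y\bar Y^T+\tfrac12\bar Y\bar Y^T = \tfrac12(Y-\bar Y)(Y-\bar Y)^T$ together with $\ip{\bar V\bar Y}{Y-\bar Y}=\ip{\bar V}{(Y-\bar Y)\bar Y^T}$ (which uses $\bar V=\bar V^T$), and the final inequality is a consequence of $\bar V\succeq 0$.

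For equality and $\dom\Phi_\cV=\Rnm$ under boundedness of $\cV\cap\Sn_+$, the latter is immediate since boundedness gives $\dom\sig_{\cV\cap\Sn_+}=\Sn$. For the equality of the two subgradient sets I would view $\Phi_\cV=g\circ H$ as a convex-composite function with $H(Y)=YY^T$ smooth and $g=\tfrac12\sig_{\cV\cap\Sn_+}\in\Gamma_0(\Sn)$, and apply the convex-composite subdifferential chain rule (cf.~\cite[Thm.~10.6]{RoW98}), which yields $\p\Phi_\cV(\bar Y)=DH(\bar Y)^*\,\p g(\bar Y\bar Y^T)$ once the BCQ $N_{\dom g}(\bar Y\bar Y^T)\cap\ker DH(\bar Y)^*=\{0\}$ is verified. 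By Lemma~\ref{lem:BCQ VGF}, boundedness of $\cV\cap\Sn_+$ is precisely what guarantees this BCQ at every $\bar Y\in\dom\Phi_\cV$. Computing $DH(\bar Y)^*V=2V\bar Y$ for $V\in\Sn$, and using $\p g(\bar Y\bar Y^T)=\tfrac12\argmax_{V\in\cV\cap\Sn_+}\ip{V}{\bar Y\bar Y^T}$, the image $DH(\bar Y)^*\p g(\bar Y\bar Y^T)$ reduces exactly to the set on the right-hand side of the stated inclusion.

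The main delicacy is to track the scalar factors (the $\tfrac12$ in the definition of $\Phi_\cV$ against the $2$ appearing in $DH(\bar Y)^*$) so the subgradient elements come out as $\bar V\bar Y$ rather than as a scalar multiple thereof, and to confirm that the convex-composite BCQ is equivalent to boundedness of $\cV\cap\Sn_+$; both points are routine given Lemma~\ref{lem:BCQ VGF}.
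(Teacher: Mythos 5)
Your proof is correct, and for the part requiring boundedness it is essentially the paper's argument: both invoke Lemma~\ref{lem:BCQ VGF} to secure the basic constraint qualification and then apply the convex-composite chain rule \cite[Theorem 10.6]{RoW98} together with $\nabla H(\bar Y)^*V=2V\bar Y$ and the characterization of $\p\sigma_{\cV\cap\bS^n_+}$ as an argmax. Where you genuinely depart is the unconditional inclusion: the paper obtains it from the same chain-rule citations (that direction needs no constraint qualification), whereas you verify it from first principles via the identity $\tfrac12 YY^T-Y\bar Y^T+\tfrac12\bar Y\bar Y^T=\tfrac12(Y-\bar Y)(Y-\bar Y)^T$ together with $\bar V\succeq 0$; this is self-contained, checks out, and makes the role of positive semidefiniteness transparent. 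Your closing remark about tracking scalar factors is also well taken: the chain rule produces the optimality condition $\tfrac12\ip{\bar V}{\bar Y\bar Y^T}=\Phi_{\cV}(\bar Y)$, i.e., $\bar V$ attains the supremum defining $\sigma_{\cV\cap\bS^n_+}(\bar Y\bar Y^T)=2\Phi_{\cV}(\bar Y)$, which is what your computation uses and what the statement of Proposition~\ref{prop:VGF subdiff} evidently intends (the displayed condition there is missing a factor $\tfrac12$).
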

\begin{proof} Combine  Lemma \ref{lem:BCQ VGF} with \cite[Theorem 10.6]{RoW98}, \cite[Corollary 8.25]{RoW98} and the fact that for $H:Y\to YY^T$ we have $\nabla H(Y)^*V=2VY$ for all $(Y,V)\in \bE$.
\end{proof}

\noindent
We next consider an example. 

\begin{example}[Failure of subdifferential calculus for VGF]\label{ex:Failure}  Let $\cV:= \pos \{I\}\subset \bS^n$, put $m:=1$ and let $H:Y\mapsto YY^T$.  Then clearly condition (i) in Proposition \ref{prop:VGF conjugate} holds, but condition (ii) and hence the BCQ fails. 
We have 
\begin{equation}\label{eq:Sig Ex}
\sigma_{\cV\cap \bS^n_+}(W)=\sup_{\alpha \geq 0} \alpha \tr(W)=\delta_{\set{U\in \bS^n}{\tr(U)\leq 0}}(W)\quad (W\in \bS^n).
\end{equation}
Hence, we obtain $\dom \Phi_{\cV}=\{0\}$ and 
$
\nabla H(0)^*\p \sigma_{\cV\cap \bS^n_+}(0)=\{0\}.
$
On the other hand, we have 
$
\Phi_{\cV}=\frac{1}{2}\sigma_{\cV\cap \bS^n_+}\circ H=\delta_{\{0\}}.
$
Therefore,
\[
\p \Phi_{\cV}(0)=N_{\{0\}}(0)=\R^{n\times m}\supsetneq \{0\}= \nabla H(0)^*\p \sigma_{\cV\cap \bS^n_+}(0).
\]

\end{example}

\noindent
Example \ref{ex:Failure} establishes  various things: First, it shows  that  condition (i) in Proposition \ref{prop:VGF conjugate} does not yield equality in the subdifferential formula for VGFs. It also illustrates that equality in the subdifferential formula may fail tremendously in the absence of BCQ, even for a convex-composite which is, in fact, convex. 

Jalali, Fazel and Xiao \cite{JFX17} employ great effort to 
compute  the conjugate of a (convex) VGF, cf. the proof of 
\cite[Proposition 7]{JFX17}. 
However, a slightly refined version of \cite[Proposition 7]{JFX17} follows 
immediately from our analysis.

%Here we denote $\Omega_2(0,0)$ by $\Omega_2$, see \eqref{eq:C}. 

\begin{proposition}[Subdifferential of $\Phi_{\cV}^*$]\label{prop:VGF subdifferential conjuagte} Let $\Phi_{\cV}$ be given by \eqref{eq:VGF}. 
%and assume that 
%that  $0\in \ri(\cC+\barr \cV)$. 
\begin{itemize}
\item[(a)] ((S)CCQ) If $\cV\cap \bS^n_{++}\ne\emptyset$,
% and there exists $(Y,T)\in \Rnm\times \cV$
%%		such that $\half YY^T\preceq T$, 
%then 
$\dom \p \Phi^*_\cV=\dom \Phi_{\cV}^*$ and 
\[
	\partial \Phi_{\cV}^*(\bar X)
	=\argmax_Y\left\{ \ip{\bar X}{Y}-\inf_{\half YY^T\preceq T}\sigma_{\cV\cap\Sn_+}{(T)}\right\}.
\]
%		
%		
%		(PCQ and CCQ) the set $\cV\cap \bS^n_{++}$ is nonempty,
\item[(b)] ((B)PCQ) If the set $\cV\cap\bS^n_{+}$ is nonempty and bounded,
%and  $\mathcal{V}^\infty \cap \bS^n_{+}= \{0\}$ (or equivalently $\cV\cap \bS^n_{+}\neq \emptyset$ bounded),
$\dom \p \Phi^*_\cV=\dom \Phi_{\cV}^*$ and we have
%	for any
%$\bar X\in \R^{n\times m}$ where $\bar X$ is finite we have
\[
\partial \Phi_{\cV}^*(\bar X)=\set{\bar Y}{
\begin{aligned}&\exists \bar V\in\cV\cap \bS^n_+: \rge \bar X\subset \rge \bar V,\;  
\\
&\Phi_{\cV}^*(\bar X)=\frac{1}{2}\tr\left(\bar X^T\bar V^\dagger \bar X\right)=\ip{\bar X}{\bar Y}-\Phi_{\cV}(\bar Y),
\end{aligned}}
\]
for all $\bar X\in \dom \Phi_{\cV}^*$.
\end{itemize}
%
%%Under ii) we have $\ip{\bar X}{\bar Y}=\Omega(\bar Y)\;(\bar Y\in\p\Phi_{\cV}^*(\bar X))$.
%
%
\end{proposition}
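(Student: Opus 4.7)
The plan is to identify $\Phi_\cV^*=p$, where $p$ is the infimal projection \eqref{eq:p indicator} with $A=B=0$ and $h=\delta_{\cV\cap\bS^n_+}$. Indeed, Theorem~\ref{th:indicatorConjugateGeneral} (applied with either $h=\delta_\cV$ or, equivalently, $h=\delta_{\cV\cap\bS^n_+}$, since $\varphi(X,\cdot)$ is supported on $\cK_A=\bS^n_+$) yields $p^*=\Phi_\cV$, and Proposition~\ref{prop:VGF conjugate} gives $p\in\Gamma_0(\R^{n\times m})$ under either hypothesis of (a) or (b), so $\Phi_\cV^*=p^{**}=p$. The subdifferential formulas for $p$ in Proposition~\ref{prop:SD + PS} can then be specialized, with $h^*=\sigma_{\cV\cap\bS^n_+}$.

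For (a), the hypothesis $\cV\cap\bS^n_{++}\neq\emptyset$ is CCQ, which Lemma~\ref{lem:ccq=sccq} identifies with SCCQ for VGFs; thus Proposition~\ref{prop:SD + PS}(a) yields $\dom p=\dom\partial p=\R^{n\times m}$ together with its argmax formula. Using $\Omega(0,0)=\{(Y,T):T\preceq-\half YY^T\}$ and the substitution $S=-T$ rewrites the inner infimum as $\inf_{S\succeq\half YY^T}\sigma_{\cV\cap\bS^n_+}(S)$, proving (a).

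For (b), BPCQ coincides with PCQ for VGFs by Corollary~\ref{cor:Indicator PCQ}, so Proposition~\ref{prop:SD + PS}(b) applies; its third representation gives, for $\bar X\in\dom p$,
\[
\partial p(\bar X)=\set{\bar Y}{\exists\, \bar V:\;p(\bar X)=\psi(\bar X,\bar V)=\ip{\bar X}{\bar Y}-p^*(\bar Y)}.
\]
Finiteness of $\psi(\bar X,\bar V)=\varphi(\bar X,\bar V)+\delta_{\cV\cap\bS^n_+}(\bar V)$ requires $\bar V\in\cV\cap\bS^n_+$ with $\rge\bar X\subset\rge M(\bar V)=\rge\bar V$ (since $A=0$), whence \eqref{eq:DefPhi} gives $\psi(\bar X,\bar V)=\half\tr(\bar X^T\bar V^\dagger\bar X)$. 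Substituting $p=\Phi_\cV^*$ and $p^*=\Phi_\cV$ then delivers the stated formula.

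It remains to verify $\dom\partial\Phi_\cV^*=\dom\Phi_\cV^*$; only the $\supset$ inclusion requires argument. Given $\bar X\in\dom\Phi_\cV^*$, primal attainment under BPCQ (Theorem~\ref{th:p under PCQ}(b)) supplies $\bar V\in\cV\cap\bS^n_+$ with $\rge\bar X\subset\rge\bar V$ and $\Phi_\cV^*(\bar X)=\half\tr(\bar X^T\bar V^\dagger\bar X)$. Setting $\bar Y:=\bar V^\dagger\bar X$ so that $\bar X=\bar V\bar Y$, the Fenchel--Young equivalence together with Proposition~\ref{prop:VGF subdiff} reduces $\bar Y\in\partial\Phi_\cV^*(\bar X)$ to verifying that $\bar V$ attains the maximum defining $\Phi_\cV(\bar Y)$, i.e., $\half\bar Y\bar Y^T\in N_{\cV\cap\bS^n_+}(\bar V)$. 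The main technical obstacle lies precisely here: under BPCQ alone (without CCQ) the full subdifferential sum rule for $\psi$ may fail, so this dual-optimality transfer must be argued directly. We anticipate doing so by exploiting the smoothness of $\varphi(\bar X,\cdot)$ on $\{V:\rge\bar X\subset\rge V\}$ together with the Fenchel--Young equality $\varphi(\bar X,\bar V)+\varphi^*(-\half\bar Y\bar Y^T)=-\ip{\half\bar Y\bar Y^T}{\bar V}$ at the primal minimizer, rather than invoking Corollary~\ref{cor:Lem Subdiff psi}.
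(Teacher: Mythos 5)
Your overall strategy coincides with the paper's: identify $\Phi_\cV^*$ with the infimal projection $p$ (via Theorem \ref{th:indicatorConjugateGeneral} and Proposition \ref{prop:VGF conjugate}, so that $\Phi_\cV^*=p^{**}=p$), then specialize Proposition \ref{prop:SD + PS}. Part (a) and the derivation of the subdifferential formula in part (b) are correct and essentially as in the paper.

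The gap is in part (b), in the claim $\dom \p \Phi_\cV^*=\dom \Phi_\cV^*$. You correctly isolate the inclusion $\supset$ as the issue, but you do not prove it: the passage beginning ``We anticipate doing so by exploiting the smoothness of $\varphi(\bar X,\cdot)$\dots'' is a plan, not an argument, and it is precisely the step you yourself flag as the main technical obstacle. Moreover, the route you sketch is delicate for the reason you name --- without CCQ the sum rule for $\p\psi$ is unavailable --- and the dual-optimality transfer ($\bar V$ attaining the supremum defining $\Phi_\cV(\bar Y)$ for $\bar Y=\bar V^\dagger\bar X$) is essentially equivalent to the nonemptiness of $\p p(\bar X)$ that you are trying to establish, so deferring it leaves the main point unproven. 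The paper disposes of this in one line by a completely different observation: since $B=0$, Lemma \ref{lem:p}(c) shows that $\dom p$ is a subspace, hence relatively open, so that $\dom p=\ri(\dom p)\subset \dom\p p$ because $p\in\Gamma_0(\Rnm)$ under BPCQ. You should replace the deferred argument with this observation (or else carry out the attainment-transfer argument in full, which would be considerably longer).
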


\begin{proof} 
(a) By Lemma \ref{lem:ccq=sccq}, PCQ$=$SCCQ and 
$\Phi_{\cV}=p^*$.  
The subdifferential formula follows from Proposition \ref{prop:SD + PS} (a) (see in particular the third identity in (c)). 
\smallskip

\noindent
(b)
The fact that $\dom \p \Phi^*_\cV=\dom \Phi_{\cV}^*$ is due to the fact that the latter is a subspace, hence relatively open, cf. Lemma \ref{lem:p} (c). The remainder follows from Lemma \ref{lem:ccq=sccq} and Proposition \ref{prop:SD + PS} (c).
%The addendum   under ii) uses Proposition \ref{prop:VGF subdiff}. 
\end{proof}

\subsection{VGFs and squared Ky Fan norms}\label{sec:VGF and Ky Fan}
%\section{Ky Fan p-k Norms}\label{sec:kyfan}
For $p \ge 1$, $1 \le k \le \min\{m, n\}$, the \emph{Ky Fan (p,k)-norm} \cite[Ex. 3.4.3]{HJ91} of a matrix $X \in \mathbb R^{n\times m}$ is defined as
$$
\|X\|_{p,k} = \left( \sum_{i=1}^k \sigma_i^p \right)^{1/p},
$$
where $\sigma_i$ are the singular values of $X$ sorted in nonincreasing order. 
In particular, the $(p, \min\{m,n\})$-norm is the Schatten-p norm
%$\| X\|_{Sh-p}:=\|X\|_{p,\min\{n,m\}}$,
and the $(1, k)$-norm is the standard Ky Fan k-norm, see \cite{HJ91}.
For $1\le p\le\infty$, denote the closed unit
ball for $\|\cdot\|_{p,k}$ by $\bB_{p,k}:=
\set{X}{\|X\|_{p,k}\le 1}$.
For $1\le p\le \infty$, define $s:=p/2$. Then, for $2\le p\le \infty$,
%
%For $2\le s\le \infty$, define
%\begin{equation}\label{eq:rescale p and q}
%p:=\begin{cases} 2s,& \half\le s<\infty,\\
%                           \infty,& s=\infty,
%      \end{cases}
%      \quad\mbox{and}\quad
%q:=\begin{cases} \frac{s}{s-2},& 2< s<\infty,\\
%                           \infty,& s=2,\ \text{and},\\
%                           1,& s=\infty,
%      \end{cases}     
%\end{equation}
%so that $\frac{1}{p}+\frac{1}{q}=1$ with the usual interpretation
%when either $p$ or $q$ take the values $1$ or $\infty$. With these
%definitions, 
we have
\[\begin{aligned}
\half \|X\|^2_{p,k}&=
\half\left[ \sum_{i=1}^k(\sig_i^2)^s\right]^{1/s}
\\ &
= \half\|XX^T\|_{s,k}=\half\sig_{\bB^\circ_{s,k}}(XX^T)
%\half\support{XX^T}{\bB^\circ_{s,k}}
=\half\sig_{\bB^\circ_{s,k}\cap \bS^n_+}(XX^T)
%=\support{XX^T}{\bB^\circ_{s,k}\cap \bS^n_+},
\\ &
=\half\Omega_{\bB^\circ_{s,k}}(X),
\end{aligned}
\]
where the first equality follows from the definition of $s$, the second
from the definition of the singular values, the third from properties 
of gauges and their polars, the fourth from 
the equivalence $\ip{V}{XX^T}=\sum_{j=1}^mx_j^TVx_j$ with 
the $x_j$'s the columns of $X$,
and the final from \eqref{eq:VGF}.
For the Schatten norms, where $k=\min\{n,m\}$ we have 
$\bB^\circ_{s,k}= \bB_{\hat s,k}$,
where $\hat s$ satisfies $\frac{1}{s}+\frac{1}{\hat s}=1$, see \cite{HJ85}. 
For other values of $k$, the representation
of $\bB^\circ_{s,k}$ can be significantly more complicated,
e.g. see \cite{DoV15}.

%\section{Final remarks}\label{sec:Final}
%\noindent
%In this paper we studied  partial infimal projections of the generalized matrix-fractional function  with a closed, proper, convex function  $h:\bS^n\to\rbar$. Sufficient conditions for closedness and properness  as well  as representations of both the conjugate and the subdifferential  of the infimal projections are given, along with the essential constraint qualifications.  
%%which are studied in depth.
%Particular emphasis was given in the instances where the function $h$ is a support or an indicator function of a closed, convex set in $\bS^n$. As a special case of support functions, infimal projections with suitable linear functionals yielded smoothing variational representations for the family of scaled nuclear norms. In the indicator case, it was shown that, under appropriate assumptions, the infimal projection is positively homogeneous of degree two, in fact, a squared gauge. Moreover, in a special case,  it was proven that the conjugate of the infimal projection coincides with a variational Gram function (VGF) of the underlying set. Thus we were able to easily establish a  variational calculus for VGFs as a consequence of our more general analysis. In addition, we made a connection 
%with Ky Fan norms.
\section{Final remarks}\label{sec:Final}
\noindent
We studied  partial infimal projections of the generalized matrix-fractional function  with a closed, proper, convex function  $h:\bS^n\to\rbar$. Sufficient conditions for closedness and properness  as well  as representations of both the conjugate and the subdifferential  of the infimal projections under  the associated essential constraint qualifications.  
%which are studied in depth.
The general results were applied to the cases when 
%Particular emphasis was given in the instances where the function 
$h$ is a support or an indicator function of a closed, convex set in $\bS^n$. 
These results revealed close connections to a range of important convex functions on $\Rnm$.
In particular, the infimal projection with linear functionals 
yielded smoothing variational representations for the family of scaled nuclear norms, while 
the infimal projection with an indicator is often a squared gauge.  
As a special case,  it was shown that the conjugate  of the infimal projection coincides with a variational Gram function (VGF) of the underlying set. 
Hence the variational calculus for VGFs follows easily as a consequence of our  general study. 
%In addition, we made a connection 
%with Ky Fan norms.
In all of these cases, the infimal projection opens the door to new smoothing approaches
to a range of nonsmooth optimization problems on $\Rnm$ using the representation
\eqref{eq:embedd optimization}.

\section{Appendix}\label{sec:Appendix} 
\noindent
%In this section we provide supplementary material:
In what follows we  use the  {\em direct sum} of  functions $f_i: \cE\to \eR\;(i=1,\dots,m)$  which is defined by 
\[
\oplus_{i=1}^m f_i:\cE^m\to \eR, \quad \oplus_{i=1}^m f_i(x_1,\dots,x_m)=\sum_{i=1}^m f_i(x_i).
\]

\begin{theorem}[Extended sum rule]\label{prop:ExtSumRule General}
Let $f_i\in \Gamma_0(\cE)\; (i=1,\dots,m)$ and set  $f:=\sum_{i=1}^mf_i$.
Then the following hold:
\begin{enumerate}
\item[(a)]  The conjugate of $f$ is given by  $f^*=\cl(f^*_1\infconv f^*_2\infconv\dots\infconv f^*_m)$.  Under the condition 
\begin{equation}\label{eq:Rel Int CQ General}
\bigcap_{i=1}^m\ri(\dom f_i)\ne \emptyset
\end{equation}
we have  $f^*=f^*_1\infconv f^*_2\infconv\dots\infconv f^*_m$ which is closed,  proper and convex  and
\[
\emptyset\ne \cT(z):=\argmin\set{\sum_{i=1}^mf_i^*(z^i)}{z=\sum_{i=1}^mz^i}
\quad ( z\in\dom f^*).
\]
\item[(b)]
If $\bz\in\sum_{i=1}^m\p f_i(\bx)$, then $\cT(\bz)\ne\emptyset$ and
\[
\cT(\bz)=\set{(z^1,\dots,z^m)}{\bz=\sum_{i=1}^mz^i,\ z^i\in\p f_i(\bx),\ i=1,\dots,m}.
\]
\item[(c)]  Under \eqref{eq:Rel Int CQ General} we have  $\p f = \sum_{i=1}^m\p f_i$,
$\dom\p f =\bigcap_{i=1}^m\dom \p f_i$ and 
\[
\begin{aligned}
\p f(\bx)&=\set{\sum_{i=1}^mz^i}{z^i\in\p f_i(\bx), i=1,\dots,m}
\\ &= \set{\bz}
{(z^1,\dots,z^m)\in\cT(\bz),\; z^i\in\p f_i(\bx)\ i=1,\dots m}\; (\bx\in \dom\p f). 
\end{aligned}
\]
\item[(d)]  Under \eqref{eq:Rel Int CQ General}, 
$f^*=f^*_1\infconv f^*_2\infconv\dots\infconv f^*_m$, 
$\dom \p f^*=\set{z}{\emptyset\ne \cT(z)}\ne\emptyset$, and
\[
\p f^*(\bz)=\set{\bigcap_{i=1}^m\p f_i^*(z^i)}{\bz=\sum_{i=1}^mz^i}\quad (\bz\in\dom\p f^*).
\]
\end{enumerate}
\end{theorem}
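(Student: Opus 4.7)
The plan is to reduce the four claims to the classical sum/chain rules of convex analysis by encoding $f=\sum_i f_i$ as the composition of a linear map with a direct sum function. To this end, set $F:=\oplus_{i=1}^m f_i\in\Gamma_0(\cE^m)$ and let $\Lambda:\cE\to\cE^m$ be the diagonal embedding $\Lambda x=(x,\dots,x)$, whose adjoint is the summation map $\Lambda^*(z^1,\dots,z^m)=\sum_i z^i$. Then $f=F\circ\Lambda$, and straightforward computation gives $\dom F=\prod_i\dom f_i$, $\ri(\dom F)=\prod_i\ri(\dom f_i)$, $F^*(z^1,\dots,z^m)=\sum_i f_i^*(z^i)$, and $\p F(x_1,\dots,x_m)=\prod_i\p f_i(x_i)$. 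In particular, the constraint qualification \eqref{eq:Rel Int CQ General} translates exactly into $\rge\Lambda\cap\ri(\dom F)\ne\emptyset$, which is the standard CQ for linear composition.

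For (a), I would invoke the conjugate formula for composition with a linear map, e.g.\ \cite[Theorem 16.3]{RTR70}, which gives $f^*=\cl(\Lambda^*F^*)$ where $(\Lambda^*F^*)(y):=\inf\set{F^*(z)}{\Lambda^*z=y}=(f_1^*\infconv\cdots\infconv f_m^*)(y)$, establishing the general formula. Under the CQ, \cite[Theorem 16.4]{RTR70} then ensures the closure operation is redundant and the infimum defining $\Lambda^*F^*$ is attained wherever finite, which is exactly the statement $\cT(z)\ne\emptyset$ for $z\in\dom f^*$. Properness and closedness of $f^*$ follow from $f\in\Gamma_0(\cE)$, which holds because the CQ forces $\bigcap_i\dom f_i\ne\emptyset$.

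Part (b) follows directly from the Fenchel--Young inequality. If $\bz=\sum_i z^i$ with $z^i\in\p f_i(\bx)$, summing the equalities $f_i(\bx)+f_i^*(z^i)=\ip{z^i}{\bx}$ yields $f(\bx)+\sum_i f_i^*(z^i)=\ip{\bz}{\bx}$, whence $f^*(\bz)\ge\sum_i f_i^*(z^i)$; since the reverse bound $f^*(\bz)\le (f_1^*\infconv\cdots\infconv f_m^*)(\bz)\le\sum_i f_i^*(z^i)$ is universal, $(z^1,\dots,z^m)\in\cT(\bz)$. Conversely, any splitting in $\cT(\bz)$ achieves $\sum_i f_i^*(z^i)=\ip{\bz}{\bx}-f(\bx)=\sum_i(\ip{z^i}{\bx}-f_i(\bx))$; together with $f_i^*(z^i)\ge\ip{z^i}{\bx}-f_i(\bx)$ for each $i$, equality must hold termwise, giving $z^i\in\p f_i(\bx)$.

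For (c) and (d), I would apply the subdifferential chain rule \cite[Theorem 23.9]{RTR70} to $f=F\circ\Lambda$: under the CQ, $\p f(\bx)=\Lambda^*\p F(\Lambda\bx)=\Lambda^*\prod_i\p f_i(\bx)=\set{\sum_i z^i}{z^i\in\p f_i(\bx)}$, and the alternative description in terms of $\cT$ is then an immediate consequence of part (b). The domain statement $\dom\p f=\bigcap_i\dom\p f_i$ follows from the product structure of $\p F$. Finally, (d) is the dual statement: since $f^*\in\Gamma_0(\cE)$ under the CQ, the equivalence $\bx\in\p f^*(\bz)\iff\bz\in\p f(\bx)$ combined with (c) and the characterization of $\cT$ in (b) yields $\p f^*(\bz)=\bigcap_i\p f_i^*(z^i)$ for any splitting $(z^1,\dots,z^m)\in\cT(\bz)$, with $\dom\p f^*$ coinciding with the set on which $\cT$ is nonempty. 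The only nontrivial step is the attainment/closure-removal in (a); this is the main obstacle, but it is handled cleanly by \cite[Theorem 16.4]{RTR70} once the CQ is reformulated as $\rge\Lambda\cap\ri(\dom F)\ne\emptyset$.
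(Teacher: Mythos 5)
Your proposal is correct in substance and follows essentially the same route as the paper: everything is reduced to Rockafellar's conjugacy and subdifferential calculus for sums and linear images, and your diagonal embedding $\Lambda$ is exactly the adjoint of the summation map $L$ that the paper itself introduces in parts (b) and (d), so citing Theorems 16.3 and 23.9 of Rockafellar instead of their sum-rule corollaries 16.4 and 23.8 is a distinction without a difference. The one place you genuinely improve on the paper is the forward inclusion in (b): your Fenchel--Young sandwich $\sum_i f_i^*(z^i)\le f^*(\bz)\le(f_1^*\infconv\cdots\infconv f_m^*)(\bz)\le\sum_i f_i^*(z^i)$ is more elementary than the paper's optimality-condition argument $0\in\rge L^*+\p f_1^*(z^1)\times\cdots\times\p f_m^*(z^m)$ and needs no auxiliary qualification condition. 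One caution concerning (d): the ingredients you invoke (the inversion rule $\bx\in\p f^*(\bz)\iff\bz\in\p f(\bx)$ together with (b) and (c)) deliver only the inclusion $\dom\p f^*\subset\set{z}{\cT(z)\ne\emptyset}$. The reverse inclusion does not follow from them: under \eqref{eq:Rel Int CQ General}, part (a) gives $\set{z}{\cT(z)\ne\emptyset}=\dom f^*$, which can strictly contain $\dom\p f^*$ already for $m=1$ (e.g.\ $f_1(x)=\sqrt{1+x^2}$ on $\R$, where $\dom f^*=[-1,1]$ but $\p f^*(\pm1)=\emptyset$). You should either supply a separate argument for that inclusion or restrict the claim; note, however, that the paper's own proof of this step, which passes through an unverified exact sum rule for $\delta_{\{\bz\}}(L(\cdot))+\oplus_{i=1}^m f_i^*$, suffers from the same defect.
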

\begin{proof}
%{\bf Proof of Proposition \ref{prop:ExtSumRule}:}
(a)  See \cite[Theorem 16.4]{RTR70}.
\smallskip

\noindent
(b) Let $L:\cE^m\to \cE$ be defined by    $L(z^1,\dots,z^m)= \sum_{i=1}^mz^i$. Then its adjoint $L^*:\cE\to\cE^m$ is given by  $L^*(x)=(x,\dots,x)\; (x\in \cE)$.
Let $\bz\in\sum_{i=1}^m\p f_i(\bx)$, and take any $z^i\in\p f_i(\bx)\; (i=1,\dots,m)$ such that
$\bz=\sum_{i=1}^mz^i$. By \cite[Theorem 23.5]{RTR70}, 
$\bx\in\p f^*_i(z^i)\; (i=1,\dots,m)$. Hence, by 
\cite[Theorem 23.8, 23.9]{RTR70} and \cite[Proposition 16.8]{BaC11} we obtain 
\[
0\in \rge L^*  +\p f^*_1(z^1)\times \cdots \times \p f^*_m(z^m)\subset
\p(\indicator{L(\cdot)-\bz}{\{0\}}+\oplus_{i=1}^m f^*_i)(z^1,\dots,z^m).
\]
Therefore, $(z^1,\dots,z^m)\in\cT(\bz)$, and we have
\[
\emptyset\ne \set{(z^1,\dots,z^m)}{\bz=\sum_{i=1}^mz^i,\ z^i\in\p f_i(\bx),\ i=1,\dots,m}
\subset \cT(\bz).
\]
To see the reverse inclusion, let $(z^1,\dots,z^m)\in\cT(\bz)$. By assumption and again  \cite[Theorem 23.8]{RTR70}, we have 
$\bz\in\sum_{i=1}^m\p f_i(\bx)\subset \p f(\bar x)$. 
By \cite[Theorem 23.5]{RTR70} and the fact that 
$f^*(\bar z)=\sum_{i=1}^mf_i^*(z^i)$, we have
\[
\sum_{i=1}^m\ip{z^i}{\bx}=\ip{\bz}{\bx}
= f^*(\bar z)+f(\bx)
 =\sum_{i=1}^m(f_i^*(z^i)+f_i(\bx)),
\]
so that
\(
0= \sum_{i=1}^m(f_i^*(z^i)+f_i(\bx)-\ip{z^i}{\bx}).
\)
By the 
Fenchel-Young inequality, $f_i^*(z^i)+f_i(\bx)-\ip{z^i}{\bx}\ge 0 \;(i=1,\dots, m)$, 
hence equality must hold for each $i=1,\dots,m$, or equivalently
$z^i\in\sd f_i(\bx)\;(i=1,\dots, m)$. This establishes the reverse inclusion.
\smallskip

\noindent
(c) The first two consequences follow from \cite[Theorem 23.8]{RTR70}.
For the third, the first equivalence simply follows from the fact that
$\p f=\sum_{i=1}^m\p f_i$. To see the second equivalence, let $\bz\in\p f(\bx)$.
Then, by part (b), $\cT(\bz)\ne\emptyset$, and, for every $(z^1,\dots,z^m)\in\cT(\bz)$,
we have $z^i\in\p f_i(\bx),\ i=1,\dots,m$. Hence,
\[\p f(\bx)\subset\set{\bz}{(z^1,\dots,z^m)\in\cT(\bz),\ z^i\in\p f_i(\bx),\ i=1,\dots,m}.\]
The reverse inclusion follows from the first equivalence.
\smallskip

\noindent
(d) By (a), $f^*=f^*_1\infconv f^*_2\infconv\dots\infconv f^*_m\in \Gamma_0(\cE)$ and $\cT(z)\ne\emptyset$ for all $z\in\dom f^*$.

Let us first suppose that $\bz\in\dom\p f^*\subset\dom f^*$, then 
$\cT(\bz)\ne\emptyset$. Let $\bx\in\p f^*(\bz)$. By \cite[Theorem 23.5]{RTR70},
$\bz\in\p f(\bx)$. By part (c), this is equivalent to the existence of $z^i\in\p f_i(\bx)$
such that $\bz=\sum_{i=1}^mz^i$, which, by \cite[Theorem 23.5]{RTR70},
is equivalent to $\bx\in \set{\bigcap_{i=1}^m\p f_i^*(z^i)}{\bz=\sum_{i=1}^mz^i}$.
Hence $\p f^*(\bz)\subset \set{\bigcap_{i=1}^m\p f_i^*(z^i)}{\bz=\sum_{i=1}^mz^i}$.

On the other hand, let $\bx\in \set{\bigcap_{i=1}^m\p f_i^*(z^i)}{\bz=\sum_{i=1}^mz^i}$.
Then, by \cite[Theorem 23.5]{RTR70}   we have  $\bz\in\p f(\bx)$. But then, again by \cite[Theorem 23.5]{RTR70}, $\bx\in\p f^*(\by)$.
Finally, suppose that $(z^1,\dots,z^m)\in\cT(\bz)\ne\emptyset$. 
Then, as in part (a), 
\(
0\in  \rge L^*  +\p f^*_1(z^1)\times \cdots \times \p f^*_m(z^m),
\)
or equivalently, there is an $\bx$ such that 
$\bx\in\bigcap_{i=1}^m\p f^*_1(z^i)$ with $\bz=\sum_{i=1}^mz^i$, i.e.,
$\bx\in\p f^*(\bz)$. This completes the proof.
\end{proof}

%
%\noindent
%An interesting consequence  of Proposition \ref{prop:ExtSumRule General} (a) is the following result.  
%
\begin{proposition}[Partial conjugates]\label{prop:Partial Conj} Let 
$f\in \Gamma(\cE_{1}\times \cE_2)$ and $\bar x\in\cE_1$ be such that 
$\bar g:=f(\bar x,\cdot)$ is proper and  $\bar x\in \ri L(\dom f)$, where $L:(x,v)\mapsto x$.
Then %$\bar g^*$ is the closure of the function 
\[
 \bar g^*(w)=\inf_{z: (z,w)\in \dom f^*} [f^*(z,w)-\ip{\bar x}{z}].
\]
%If $\bar x\in \ri L(\dom f)$, where $L:(x,v)\mapsto x$, then the closure can be dropped.
\end{proposition}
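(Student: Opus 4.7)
The plan is to realize the right-hand side as the Fenchel biconjugate of a single auxiliary one-variable convex function evaluated at $\bar x$. Define $q: \cE_1 \to \rbar$ by
\[
q(x) := \inf_{v \in \cE_2}\bigl\{f(x,v) - \ip{w}{v}\bigr\}.
\]
Then $q$ is convex (as the partial infimal projection of the convex function $(x,v) \mapsto f(x,v) - \ip{w}{v}$), $\dom q = L(\dom f)$, and, crucially, $q(\bar x) = -\bar g^*(w)$.

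A routine calculation from the definitions yields
\[
q^*(z) = \sup_{x,v}\bigl[\ip{z}{x} + \ip{w}{v} - f(x,v)\bigr] = f^*(z,w) \qquad (z\in \cE_1).
\]
Granted this, the theorem reduces to the biconjugate identity $q(\bar x) = q^{**}(\bar x)$, because $q^{**}(\bar x) = \sup_z[\ip{\bar x}{z} - f^*(z,w)]$, and negating and discarding terms with $(z,w)\notin\dom f^*$ (which contribute $+\infty$ to the minimization) rewrites this as the claimed formula $\bar g^*(w) = \inf_{z:(z,w)\in\dom f^*}[f^*(z,w) - \ip{\bar x}{z}]$.

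The equality $q(\bar x) = q^{**}(\bar x)$ is precisely where the hypothesis $\bar x \in \ri L(\dom f) = \ri(\dom q)$ is used: once $q$ is known to be proper, \cite[Theorem 7.4]{RTR70} gives $q = \cl q$ throughout $\ri(\dom q)$, while \cite[Theorem 12.2]{RTR70} gives $\cl q = q^{**}$, so the identity follows at $\bar x$.

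The main obstacle, and the only delicate point, is checking that $q$ is proper, since partial infimal projections can in principle take the value $-\infty$. I would dispatch this by cases. If $\bar g^*(w) < +\infty$, then $q(\bar x) = -\bar g^*(w) \in \R$ (here $\bar x \in L(\dom f) = \dom q$ because $\bar g$ is proper, so $q(\bar x) < +\infty$); were $q$ improper, it would equal $-\infty$ at some point of $\dom q$, and \cite[Theorem 7.2]{RTR70} would force $q \equiv -\infty$ on $\ri(\dom q) \ni \bar x$, contradicting the finiteness of $q(\bar x)$. In the remaining case $\bar g^*(w) = +\infty$, we instead have $q(\bar x) = -\infty$, and the same Theorem~7.2 gives $q \equiv -\infty$ on $\ri(\dom q)$; consequently $q^* = f^*(\cdot,w) \equiv +\infty$, so $\{z : (z,w)\in\dom f^*\} = \emptyset$ and the right-hand side equals $\inf\emptyset = +\infty = \bar g^*(w)$, confirming the identity in this degenerate case as well.
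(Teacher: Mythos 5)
Your proof is correct, but it takes a genuinely different route from the paper's. The paper writes $\bar g^*(w)=(f+\delta_{\{\bar x\}\times\cE_2})^*(0,w)$ and invokes the sum rule for conjugates \cite[Theorem 16.4]{RTR70}: the hypothesis $\bar x\in\ri L(\dom f)=L(\ri(\dom f))$ supplies a point of $\ri(\dom f)\cap\ri(\{\bar x\}\times\cE_2)$, which allows the closure of the infimal convolution $f^*\infconv\sigma_{\{\bar x\}\times\cE_2}$ to be dropped; evaluating that inf-convolution at $(0,w)$ then gives the formula directly. You instead dualize in the other order: you form the marginal function $q(x)=\inf_v\{f(x,v)-\ip{w}{v}\}$, observe that $q^*=f^*(\cdot,w)$, and reduce the claim to the biconjugate identity $q(\bar x)=q^{**}(\bar x)$, which holds at $\bar x\in\ri(\dom q)$ via \cite[Theorems 7.4 and 12.2]{RTR70} once properness of $q$ is settled --- and your case split handling the improper situation (where both sides of the claimed identity are $+\infty$ because $f^*(\cdot,w)\equiv+\infty$) is exactly the right care to take, since marginal functions can indeed collapse to $-\infty$. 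The two arguments use the relative-interior hypothesis for essentially the same purpose (closing the duality gap at $\bar x$), but the paper's route additionally yields attainment of the infimum (exactness of the inf-convolution) under the stated qualification, a fact not claimed in the proposition and not delivered by your biconjugation argument, while yours is somewhat more elementary, requiring only biconjugation and the behavior of improper convex functions on the relative interior of their domain.
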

\begin{proof} 
%Note that $\bar g\in \Gamma_0(\cE_2)$ 
%since if $\{(w^k,\mu_k)\}\subset \epi \bar g$ with
%$ (w^k,\mu_k)\rightarrow (\bar w,\bar \mu)$, then 
%$\{(\bar x,w^k,\mu_k)\}\subset \epi f$ with 
%$(\bar x,w^k,\mu_k)\rightarrow (\bar x,\bar w,\bar \mu)\in\epi f$ so
%$(\bar w,\bar \mu)\in\epi \bar g$. 
%Define $\map{h}{\cE_{1}\times \cE_2}{\rbar}$ by $h:=f+\delta_{\{\bar x\}\times \cE_2}$,
%where the we use the \emph{inf-addition rule} 
%$\infty-\infty=-\infty+\infty=\infty$ \cite{} whenever $f(x,w)=-\infty$.
%We claim that $h\in \Gamma_0(\cE_{1}\times \cE_2)$. First observe that 
%$\epi h =\epi f \cap (\{\bar x\}\times \epi g)$ which tells us that 
%$\epi h$ is closed.
%$h\in\Gamma(\cE_{1}\times \cE_2)$. 
%Next, observe that $\dom h=\{\bar x\}\times\dom \bar g$, and so
%$\ri\dom h=\{\bar x\}\times\ri\dom \bar g$. Hence $h$ is finite-valued on $\ri\dom h$
%giving $h\in \Gamma_0(\cE_{1}\times \cE_2)$.
%
%\vskip 1in
%
%We use Proposition \ref{prop:ExtSumRule General} a) throughout.    
By \cite[Theorem 6.6]{RTR70}, $\ri L(\dom f)=L(\ri\dom f)$, so the hypothesis
implies the existence of a $\bar w\in\cE_2$ such that $(\bar x, \bar w)\in \ri\dom f$.
By \cite[Theorem 16.4]{RTR70},
\begin{eqnarray*}
\bar g^*(w)& = & \sup_{v} \{\ip{v}{w}- f(\bar x,w)\}\\
& = & \sup_{(x,v)}\{\ip{(x,v)}{(0,w)} -(f+\delta_{\{\bar x\}\times \cE_2})(x,v) \}\\
& =& (f+\delta_{\{\bar x\}\times \cE_2})^*(0,w)\\
& = & \cl(f^*\infconv \sigma_{\{\bar x\}\times \cE_2})(0,w),
\end{eqnarray*}
where the closure can be dropped if 
$\ri(\dom f)\cap\ri(\dom \delta_{\{\bar x\}\times \cE_2})\ne\emptyset$.
But this intersection is nonempty by hypothesis since
$(\bar x,\bar w)\in \{\bar x\}\times \cE_2=\ri(\dom \delta_{\{\bar x\}\times \cE_2})$. 
Hence 
\begin{eqnarray*}
\bar g^*(w)& = &(f^*\infconv \sigma_{\{\bar x\}\times \cE_2})(0,w) \\
& = & \inf_{(z,u)} \{f^*(z,u)+\ip{\bar x}{0-z}+\delta_{\{0\}}(w-u)\}\\
& = &  \inf_{z:(z,w)\in \dom f^*} \{f^*(z,w)-\ip{\bar x}{z}\}.
\end{eqnarray*}
%as desired.
\end{proof}

\bibliographystyle{amsplain}
\bibliography{references}

\begin{thebibliography}{1}

%\bibitem{ABDFM 17} {\sc A.Y.~Aravkin, J.V.~Burke, D.~Drusvyatskiy, M.P.~Friedlander and K.~MacPhee:}
%{\em  Foundations of gauge and perspective duality.}
%Preprint.

\bibitem{AuT03}{\sc A.~Auslender and M.~Teboulle:}
{\em Asymptotic Cones and Functions in Optimization and Variational Inequalities.}
Springer Monographs in Mathematics, Springer, New York 2003.

\bibitem{BaC11}
{\sc H.H.~Bauschke and P.L.~Combettes}, {\em Convex analysis and Monotone
  Operator Theory in Hilbert Spaces.} CMS Books in Mathematics,
  Springer-Verlag, 2011.
  
\bibitem{BoL 00} {\sc J.M.~Borwein and A.S.~Lewis:}
\textit{Convex Analysis and Nonlinear Optimization. Theory and Examples.}
CMS Books in Mathematics, Springer-Verlag, New York, 2000.  
  
  
\bibitem{BoV 04}{\sc S.~Boyd and L.~Vandenbergh:}
{\em Convex Optimization.}
Cambridge University Press, 2004.


\bibitem{BuH15}
{\sc J.~V. Burke and T.~Hoheisel}, {\em Matrix support functionals for inverse
  problems, regularization, and learning.} SIAM Journal on Optimization  25,
  2015, pp.~1135--1159.

\bibitem{BGH17} {\sc J.~V. Burke, Y.~Gao and T.~Hoheisel:}
{\em Convex Geometry of the Generalized Matrix-Fractional Function.}
SIAM J. Optim., 28, 2018, pp.~2189--2200.

\bibitem{BuP 92} {\sc J.V. Burke and R.A. Poliquin:}
{\em Optimality conditions for non-finite valued convex composite functions.}
Mathematical Programming 57, 1992, pp. 103--120.


\bibitem{DoV15}{\sc X.~V.~Doan and S.~Vavasis:}
{\em Finding the largest low-rank clusters with Ky Fan 2-k-norm
and $\ell_1$-norm.}
arXiv:1403.5901, 2015.

 
\bibitem{Dat 05} {\sc J.~Dattorro:}
{\em  Convex Optimization \& Euclidean Distance Geometry.}
M$\varepsilon\beta oo$ Publishing USA, Version 2014.04.08, 2005. 

%\bibitem{freund}
%R.~M. Freund.
%\newblock Dual gauge programs, with applications to quadratic programming and
%  the minimum-norm problem.
%\newblock {\em Mathematical Programming}, 38(1), 1987, pp.~47--67.

%\bibitem{FriedlanderMacedo:2016}
%M.~P. Friedlander and I.~Mac\^edo.
%\newblock {Low-rank spectral optimization via gauge duality}.
%\newblock {\em SIAM Journal on Scientific Computing}, 28(3), 2016, pp.~1616--1638.

%\bibitem{gaugepaper}
%M.~P. Friedlander, I.~Macedo, and T.~K. Pong.
%\newblock Gauge optimization and duality.
%\newblock {\em SIAM Journal on Optimization}, 24(4):1999--2022, 2014.

%\bibitem{Gal 11}{\sc J.~Gallier:}
%{\em Geometric Methods and Applications: For Computer Science and Engineering.}
%Texts in Applied Mathematics, Springer New York, Dordrecht, London, Heidelberg, 2011.

\bibitem{HUL 01}{\sc J.-B.~Hiriart-Urrruty and C.~Lemar\'echal:}
{\em Fundamentals of Convex Analysis.}
Grundlehren Text Editions, Springer, Berlin, Heidelberg, 2001.


\bibitem{HJ85} {\sc R.A.~Horn and C.R.~Johnson:}
{\em Matrix Analysis.}
Cambridge University Press, New York, N.Y., 1985.

\bibitem{HJ91} {\sc R.A.~Horn and C.~R.~Johnson:}
{\em Topics in Matrix Analysis.}
Cambridge University Press, New York, N.Y., 1991.

\bibitem{HsO 14} {\sc C.-J.~Hsieh and P.~Olsen:}
{\em Nuclear Norm Minimization via Active Subspace Selection.}
JMLR W\&CP 32 (1), 2014, pp.~575-583.

\bibitem{JFX17} {\sc A.~Jalali, M.~Fazel, and L.~Xiao:}
{\em Variational Gram functions: Convex analysis and optimization.}
SIAM Journal on  Optimization 27(4),  2017, pp.~2634--2661.  


\bibitem{RTR70}
{\sc R.T. Rockafellar}, {\em Convex analysis}, Princeton University Press,
  1970.

\bibitem{RoW98}
{\sc R.T.~Rockafellar and R.J.-B.~Wets}, {\em Variational analysis},
  vol.~317, Springer, 1998.

\end{thebibliography}

\end{document}